\def\ddb#1{\sqrt{-1}\partial\bar{\partial}#1}
\def\dbar#1{\bar{\partial}#1}
\newcommand{\C}{{\mathbb C}}
\newcommand{\R}{{\mathbb R}}
\newcommand{\CP}{{\mathbb{C}P}}
\newcommand{\dr}{\omega}
\newcommand{\al}{\alpha}
\newcommand{\be}{\beta}
\newcommand{\de}{\delta}
\newcommand{\e}{\epsilon}
\newcommand{\ka}{K\"{a}hler}
\newcommand{\MA}{Monge-Amp\'{e}re}
\newcommand{\Sc}{\mathcal{S}}
\newcommand{\Cc}{\mathcal{C}}
\theoremstyle{plain}
  \newtheorem{theorem}{Theorem}[section]
  \newtheorem{proposition}[theorem]{Proposition}
  \newtheorem{lemma}[theorem]{Lemma}
  \newtheorem{corollary}[theorem]{Corollary}
  \newtheorem{remark}[theorem]{Remark}
\theoremstyle{definition}
  \newtheorem{definition}[theorem]{Definition}
\numberwithin{equation}{section}
\begin{document}

\title[Partial $C^{0}$-estimate]{The Partial $C^{0}$-estimate along a general continuity path and applications}
\author{Ke Feng}
\author{Liangming Shen$^1$}

\address{School of Mathematical Sciences, Peking University, Beijing, 100871, P.R. China}
\email{kefeng@math.pku.edu.cn}
\address{School of Mathematical Sciences, University of Science and Technology of China, Hefei, Anhui, 230026, P.R.China}
\email{lmshen@ustc.edu.cn}
\thanks{$^1$ Partially supported by CIRGET Fellowship when the second author visited ISM of UQ\`{A}M}

\begin{abstract}
We establish a new partial $C^{0}$-estimate along a continuity path mixed with conic singularities along a simple normal crossing divisor and a positive
 twisted $(1,1)$-form on Fano manifolds. As an application, this estimate enables us to show the reductivity of the automorphism group of the limit space, which leads to a proof of Yau-Tian-Donaldson Conjecture admitting some types of holomorphic vector fields.
\end{abstract}

\maketitle


\section{Introduction}\label{section 1}

Finding canonical metrics on \ka\ manifolds is the central problem in \ka\ geometry. In 1970s in the celebrated work \cite{Yau}, Yau solved Calabi's conjecture and established the existence of Ricci flat \ka\ metric on \ka\ manifolds with $c_{1}(M)=0.$ Aubin \cite{Au} and Yau also established the
existence of \ka-Einstein metric with negative Ricci curvature on \ka\ manifolds with $c_{1}(M)<0.$ The main idea is to establish a priori estimates for the
solutions to the family of complex \MA\ equations along a continuity path. The remaining problem is the Fano case, i.e.,
$c_{1}(M)>0.$ Unlike the two cases above, Matsushima \cite{Ma} and Futaki \cite{Fu} showed that there are obstructions to the existence of \ka-Einstein metrics
on Fano manifolds thus there are Fano manifolds which do not admit \ka-Einstein metrics.

To solve the \ka-Einstein problem on Fano manifolds, Tian made a crucial progress in \cite{Ti90} which first introduced the partial $C^{0}$-estimate. Let us recall the basic settings of this problem: Let $(M,\dr_{0})$ be a Fano manifold with a \ka\ metric $\dr_{0}\in [2\pi c_{1}(M)],$ which satisfies that $Ric(\dr_{0})=\dr_{0}+\ddb h$ where $h$ a smooth $\dr_0$-psh function on $M.$ Suppose $\dr=\dr_{0}+\ddb\varphi$ is the \ka-Einstein metric on $M$ then $\varphi$
satisfies the following complex \MA\ equation $$(\dr_{0}+\ddb\varphi)^{n}=e^{h-\varphi}\dr_{0}^{n}$$ where $\varphi$ satisfies that $\int_{M}e^{h-\varphi}\dr_{0}^{n}=V.$ To solve this equation, a standard way as \cite{Au,Yau} is to establish the solution to the following continuous family of \MA\ equations
\begin{equation}\label{eq:ma-aubin}
(\dr_{0}+\ddb\varphi_{t})^{n}=e^{h-t\varphi_{t}}\dr_{0}^{n}
\end{equation}
for $t\in[0,1]$ with $\int_{M}e^{h-t\varphi_{t}}\dr_{0}^{n}=V.$
Tian realized that it was impossible to derive a priori $C^{0}$-estimate of \eqref{eq:ma-aubin} directly as \cite{Yau}. Instead, he noted that different embeddings of the Fano manifold into a projective space $\mathbb{CP}^{N_{l}}$ by holomorphic sections $S_{0},S_{1},\cdots,S_{N_{l}}$ of the line bundle $K_{M}^{-l}$ induce a family of \ka\ metrics $\dr_{0}+\ddb\psi$ parameterized by the automorphism group of the projective space, where $\psi=\log\rho-\sup\log\rho$ with
\begin{equation}\label{eq:bergman}
\rho:=\rho_{\dr,l}=\sum\limits_{i=0}^{N_{l}}||S_{i}||_{\dr}^{2}
\end{equation}
and $||\cdot||_{\dr}$ is the Hermitian metric on $K_{M}^{-l}$ induced by $\dr$ with the normalization condition $\int_{M}||S_{i}||_{\dr}^{2}\dr^{n}=1.$ Finally he established the partial $C^{0}$-estimate and showed the reductivity of the automorphism group of the manifold implies the existence of the \ka-Einstein metric in case of Fano surfaces. Later, in \cite{Ti97} Tian extended his idea and proposed $K$-stability and conjectured that this condition implies the existence of \ka-Einstein metrics on Fano manifolds. In \cite{Ti97,Ti12} Tian also pointed out that the partial $C^{0}$-estimate is the crucial step to solve the conjecture. In 2012 Tian \cite{Ti15}, and Chen-Donaldson-Sun \cite{CDS1,CDS2,CDS3} proved the partial $C^{0}$-estimate along the continuity path of conical \ka-Einstein metrics and finally solved this folklore conjecture. Recently in \cite{LTW1,LTW2} Li-Tian-Wang solved the Yau-Tian-Donaldson conjecture in case of singular Fano varieties.

As the most crucial part in the study of \ka-Einstein problem, the partial $C^{0}$-estimate has become an interesting topic in \ka\ geometry. In \cite{Ti12},
Tian introduced a partial $C^{0}$-conjecture in a more general form that there exists a uniform partial $C^{0}$-estimate for a family of Fano manifolds with
uniform positive lower Ricci curvature bound and fixed volume. Besides the partial $C^{0}$-estimates in \cite{Ti15,CDS2,CDS3} along the continuity path of conical \ka-Einstein metrics, Donaldson-Sun \cite{DS} and Tian \cite{Ti13} also considered the partial $C^{0}$-estimates on \ka-Einstein manifolds. Besides those works, there are some works such as \cite{CW,Ji,Sz,TZ} which consider the partial $C^0$-estimates under different settings.

Another important ingredient in the study of \ka-Einstein problem is the conic \ka\ metrics. As a natural generalization of \ka-Einstein metrics, the conical \ka-Einstein metrics were studied in \cite{Ber,GP,JMR,LS,LSh,Ru,TW1,TW2} and played the important role in the solution to the Yau-Tian-Donaldson Conjecture. In fact the conical \ka-Einstein metrics with deforming cone angles give rise to the continuity path which establishes the existence of the smooth \ka-Einstein metric as soon as cone angle attains $2\pi.$

In this paper, we consider a general continuity path $\{\dr_t\}_{t\in[0,T)}$ with $T\in(0,1]$ on the Fano manifold $M$ as following:
\begin{equation}\label{eq:path}
Ric(\dr_t)=t\dr_t+(1-t)(\sum_{r=1}^{m}2\pi b_r[D_r]+b_0\al_0),
\end{equation}
where
\begin{itemize}
\item $\al_0$ is a smooth positive $(1,1)$ form in $c_1(M);$
\item $D_1,\cdots,D_m$ are semi-ample irreducible divisors with simple normal crossings;
\item $b_0,b_1=\frac{p_1}{q},\cdots,b_m=\frac{p_m}{q}$ are positive rational numbers less than $1$ where $p_1,\cdots,p_n,q$ are least integers in the definition of $b_1,\cdots,b_m;$
\item $\sum_{i=r}^{m}b_r[D_r]\in(1-b_0)c_1(M)$
\item $\{\dr_t\}\in c_1(M)$ are family of \ka\ metrics with conic singularities along $D_r.$
\end{itemize}
Actually this path \eqref{eq:path} is a generalization to the path of conical \ka-Einstein metrics in \cite{Ti15,CDS1,CDS2,CDS3} and Aubin's path in \cite{Sz}.
Our main result is the partial $C^0$-estimate along this general continuity path \eqref{eq:path}:
\begin{theorem}\label{thm-main}
For $l=l_i \to\infty,$ there exist constants $c_l >0$ such that for any $x\in M,\;t\in[0,T)$
\begin{equation}\label{eq:partial C0}
\rho_{\dr_t,l}(x)>c_{l}.
\end{equation}
\end{theorem}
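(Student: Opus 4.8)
\smallskip
\noindent\textit{Strategy.} The plan is to follow the by-now-standard route to partial $C^{0}$-estimates through Gromov--Hausdorff degeneration --- as in Tian \cite{Ti15} and Chen--Donaldson--Sun \cite{CDS2,CDS3} for the conic path and Sz\'{e}kelyhidi \cite{Sz} for Aubin's path --- adapted to handle simultaneously the conic singularities along $D=\sum_{r}D_{r}$ and the twisting form $b_{0}\al_{0}$. I would argue by contradiction. If the conclusion fails, then along every sequence $l_{i}\to\infty$ estimate \eqref{eq:partial C0} fails for infinitely many $i$, so, passing to such a sequence, there are levels $l_{i}\to\infty$, points $x_{i}\in M$, and parameters $t_{i}\in[0,T)$ with $\rho_{\dr_{t_{i}},l_{i}}(x_{i})\to 0$. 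The goal is then to produce, for all large $i$, a holomorphic section $S\in H^{0}(M,K_{M}^{-l_{i}})$ with $\int_{M}\|S\|^{2}_{\dr_{t_{i}}}\dr_{t_{i}}^{n}\le C$ and $\|S\|_{\dr_{t_{i}}}(x_{i})\ge c>0$ uniformly in $i$; by the normalization in \eqref{eq:bergman} this forces $\rho_{\dr_{t_{i}},l_{i}}(x_{i})\ge c>0$, a contradiction.

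\smallskip
\noindent\textit{Uniform geometry and limit spaces.} First I would record the uniform geometry along \eqref{eq:path}. On the regular locus $M\setminus D$ one has $Ric(\dr_{t})=t\dr_{t}+(1-t)b_{0}\al_{0}\ge 0$, while the conic terms carry cone angles $2\pi\big(1-(1-t)b_{r}\big)$ bounded away from $0$ because $b_{r}<1$ (the klt condition), so that the divisor is negligible for the local geometry. Together with a uniform diameter bound and a uniform volume non-collapsing of unit balls (Bishop--Gromov starting from one non-collapsed scale), this places the metric completions of $(M\setminus D,\dr_{t})$ in a Gromov--Hausdorff precompact class to which the Cheeger--Colding--Tian structure theory applies in its conic and twisted forms \cite{CDS2,CDS3,Ti15,DS,Sz}. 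Passing to a subsequence with $t_{i}\to\bar{t}$ and to a pointed limit $(M,\dr_{t_{i}},x_{i})\to(M_{\infty},d_{\infty},x_{\infty})$, one gets: the metric singular set $\Sc\subset M_{\infty}$ has real codimension $\ge 4$, $M_{\infty}^{\mathrm{reg}}$ is a smooth open \ka\ manifold, all tangent cones are metric cones, and $M_{\infty}$ underlies a normal projective $\mathbb{Q}$-Fano variety with klt singularities on which $-K_{M_{\infty}}$ is ample and carries a limit Hermitian metric whose curvature is the limit \ka\ form on $M_{\infty}^{\mathrm{reg}}$.

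\smallskip
\noindent\textit{Partial $C^{0}$ on the limit and transplantation.} Next I would prove a partial $C^{0}$-estimate on the limit spaces themselves, uniform over all $M_{\infty}$ arising this way: the Donaldson--Sun peak-section construction (a H\"{o}rmander $L^{2}$-$\dbar{}$ problem on $M_{\infty}^{\mathrm{reg}}$, with the sections extended across $\Sc$ using its codimension), combined with a further Gromov--Hausdorff compactness argument over the class of such limits, yields $l_{0}$ and $\de_{0}>0$ so that for $l\ge l_{0}$ divisible by a fixed integer the Bergman function of every such $M_{\infty}$ at level $l$ is $\ge\de_{0}$. One then transplants a peak section at $x_{\infty}$ back to $(M,\dr_{t_{i}})$ for $i$ large: multiply its approximate identification under the Gromov--Hausdorff maps by a cutoff $\chi$ of small Dirichlet energy (using non-collapsing and the smallness of the tube around $\Sc$ that must be excised), solve $\dbar u=\dbar(\chi\widetilde{S})$ on $M$ with the weighted $L^{2}$-estimate --- the curvature positivity coming from $Ric(\dr_{t_{i}})\ge (1-t_{i})b_{0}\al_{0}>0$ on $M\setminus D$ and the admissibility of $\dr_{t_{i}}$ across $D$ --- and subtract the small correction $u$. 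This gives the section $S$ demanded above (once $l_{i}\ge l_{0}$), completing the contradiction.

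\smallskip
\noindent\textit{Main obstacle.} I expect the crux to be the structure theory of the limits in this mixed conic-plus-twisted regime: showing $M_{\infty}$ is a normal projective $\mathbb{Q}$-Fano variety with the anticanonical polarization passing to the limit. Since $D$ may degenerate or move in the limit and $\al_{0}$ is not the curvature of a canonical bundle, the purely conical case cannot be quoted verbatim; one must instead show that suitable $\dr_{t_{i}}$-almost-holomorphic sections of $K_{M}^{-l}$ converge, that the limiting complex structure on $M_{\infty}^{\mathrm{reg}}$ extends across $\Sc$, and that the conic term obstructs neither the Bochner--Kodaira integration by parts nor the Moser iteration bounding the sections' sup-norms. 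A secondary technical point is the uniform diameter bound over the whole path, especially as $t\to 0$ where the Ricci lower bound $t\dr_{t}+(1-t)b_{0}\al_{0}$ degenerates in the $\dr_{t}$-direction; there one must exploit the positivity of $\al_{0}$ and a Sobolev/volume argument rather than Myers' theorem alone.
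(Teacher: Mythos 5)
Your overall architecture (Gromov--Hausdorff limit, tangent cones, peak sections transplanted back via a H\"ormander $L^2$ $\dbar$-argument with cutoffs) is the same as the paper's, but two of your structural claims hide exactly the content that the paper has to supply, and one of them is false as stated. You assert that the metric singular set $\Sc\subset M_\infty$ has real codimension at least $4$ and that the divisor is ``negligible for the local geometry.'' For $T<1$ this is wrong: the cone angles $\be_{r,i}=1-(1-t_i)b_r$ stay bounded away from $1$, the limit of $D$ survives as a real codimension-$2$ singular set $D_\infty\subset\Sc$ (Theorem \ref{thm-limit space}, Theorem \ref{thm-smooth}), and the tangent cones along it are $\C^{n-1}\times\Cc_{\overline\be}$ with $\overline\be<1$. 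Even when $T=1$, each $\dr_i$ is conic along a codimension-$2$ divisor while the limit singular set is codimension $4$, and reconciling these is precisely what Section 3 of the paper is for (the specially constructed Hermitian metric $H_\dr$ in \eqref{eq:hermitian}, mixing $\tilde H_{\al_0}$, $\tilde H_\dr$ and the section $S$, which makes $\al_0$ and the conic factors compatible with a nonsingular metric of curvature $\dr$). Because the singular locus is only codimension $2$, your step ``extend sections across $\Sc$ using its codimension'' and ``cutoff of small Dirichlet energy using the smallness of the tube around $\Sc$'' are not automatic: one needs the cone-angle bounds $\overline\be_0\le\overline\be\le\overline\be_1<1$ of Theorem \ref{thm-tangent cone} (proved by the Gauss--Bonnet slicing argument counting intersections with $D$ together with the Ricci-bound dichotomy of Proposition \ref{prop-sze8}), the Hausdorff measure bounds \eqref{eq:T-A16}--\eqref{eq:T-A19} for $\Sc_x$, and the explicit cutoff construction of Lemma \ref{lem-T5.8} built from the holomorphic maps $F_{i,j}$. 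None of this is addressed by your sketch, and it is the bulk of the proof.

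Second, you feed into the argument that $M_\infty$ is a normal projective $\mathbb{Q}$-Fano variety with $-K_{M_\infty}$ ample. In this setting that statement is a \emph{consequence} of the partial $C^0$-estimate (Theorem \ref{thm-T5.9}), not an available input, so as written your ``partial $C^0$ on the limit'' step is circular; the Donaldson--Sun-type compactness over the class of limits that you invoke still has to construct peak sections through the tangent cones, i.e.\ through the conic codimension-$2$ locus discussed above. You do flag this as the ``main obstacle,'' but the proposal offers no route around it, whereas the paper resolves it by the smoothing theorem (Theorem \ref{thm-shen}) to get Cheeger--Colding--Tian theory and the $L^2$-estimate of Lemma \ref{lem-L2}, the smooth-convergence theorem away from $\Sc\cup D_\infty$, and then the local construction \eqref{eq:bergman-local} at every point of $M_\infty$ followed by a gradient-estimate and covering argument. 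Minor further points: the relevant positivity for the $\dbar$-problem is $l\dr_i+Ric(\dr_i)\ge(l+t_i)\dr_i$, not the degenerate bound $(1-t_i)b_0\al_0$, and the uniformity in $t\in[0,T)$ reduces to sequences $t_i\to T$; the behavior as $t\to 0$ you worry about is not where the difficulty lies.
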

By the partial $C^0$-estimate as above, similar to the previous works, as $t\to T$ there exist a family of automorphisms $\sigma_i\in G=SL(N_{l}+1)$ $(M,\cup_{r}D_r,\dr_{t_i})$ which give rise to an element in the stabilizer $G_\infty$ of the Gromov-Hausdorff limit $(M_\infty,D_{\infty}\dr_\infty).$
Then we can show that
\begin{corollary}\label{cor-reductivity}
The Lie algebra $\eta_\infty$ of $G_\infty$ is reductive.
\end{corollary}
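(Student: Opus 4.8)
The plan is to realise $\eta_\infty$ as a Lie algebra of holomorphic vector fields on the limit space and to run a Matsushima--Lichnerowicz--Bochner argument, adapted to the singular and conical geometry (as in \cite{CDS3,Ti15}), which exhibits $\eta_\infty$ as the complexification of the Lie algebra of a compact group; complexifications of Lie algebras of compact groups are exactly the reductive ones, so this is enough.

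I would first record the structure of the limit, which follows by the standard argument from Theorem~\ref{thm-main} and the Gromov--Hausdorff convergence discussed above: $M_\infty$ is a normal $\mathbb Q$-Fano variety with klt (hence rational) singularities, $D_\infty=\sum_r D_{r,\infty}$ is the effective $\mathbb Q$-Cartier Hausdorff limit of $\cup_r D_r$, and $\dr_\infty$ is a weak conical \ka--Einstein metric on $(M_\infty,D_\infty)$ solving the limit of \eqref{eq:path},
\begin{equation*}
Ric(\dr_\infty)=T\,\dr_\infty+(1-T)\Big(\sum_{r=1}^{m}2\pi b_r[D_{r,\infty}]+b_0\,\al_{0,\infty}\Big),
\end{equation*}
which is smooth on $M_\infty^{\mathrm{reg}}\setminus D_\infty$ with the expected conic asymptotics along $D_\infty$. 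Since the $\sigma_i$ are projective-linear and preserve $(M,\cup_r D_r)$, their limit $G_\infty$ acts holomorphically on $M_\infty$, preserves $D_\infty$ and the polarisation $-K_{M_\infty}$, and is therefore a linear algebraic group; for $l$ large its Lie algebra $\eta_\infty$ is represented faithfully by holomorphic vector fields on $M_\infty^{\mathrm{reg}}$ tangent to $D_\infty$.

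The heart of the argument is to attach to each $X\in\eta_\infty$ a holomorphy potential and to split it. As klt Fano varieties have $H^1(M_\infty,\mathcal O_{M_\infty})=0$, the $\bar\partial$-closed $(0,1)$-form $\iota_X\dr_\infty$ is $\bar\partial$-exact, so there is a complex function $\theta_X$ on $M_\infty^{\mathrm{reg}}$ with $\bar\partial\theta_X=\iota_X\dr_\infty$, unique after normalising $\int_{M_\infty}\theta_X\,\dr_\infty^{n}=0$; a weighted Schauder analysis of the conic Laplacian of $\dr_\infty$ would be used to control the regularity of $\theta_X$ up to $D_\infty$. Differentiating the Einstein equation, and using both that $\mathcal L_X\dr_\infty$ is $\partial\bar\partial$-exact and that $X$ preserves $D_\infty$ and the class $[\al_{0,\infty}]=c_1(M_\infty)$ (so that the twist term is also dragged by a $\partial\bar\partial$-exact form), one obtains a Lichnerowicz-type identity, schematically
\begin{equation*}
\Delta_{\dr_\infty}\theta_X+T\,\theta_X+(1-T)\,(\text{twist primitive contracted with }X)=\text{const},
\end{equation*}
so $\theta_X$ lies in the kernel of a self-adjoint second-order operator. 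Writing $\theta_X=a+\sqrt{-1}\,b$ with $a,b$ real, I would integrate the Bochner--Kodaira formula for the $(1,0)$-gradients $\nabla^{1,0}a$ and $\nabla^{1,0}b$ over $M_\infty^{\mathrm{reg}}\setminus D_\infty$ against cutoffs supported away from $\mathrm{Sing}(M_\infty)$ and $D_\infty$, replace the Ricci term using the Einstein equation, use the positivity of the twist, and let the cutoffs degenerate; this forces $\bar\partial\nabla^{1,0}a\equiv\bar\partial\nabla^{1,0}b\equiv0$. Hence $X=\nabla^{1,0}a+\sqrt{-1}\,\nabla^{1,0}b$ is a sum of two holomorphic fields with \emph{real} holomorphy potentials, which is the Matsushima splitting $\eta_\infty=\mathfrak k_\infty\oplus\sqrt{-1}\,\mathfrak k_\infty$, $\mathfrak k_\infty$ being the Lie algebra of the compact isometry group $K_\infty$ of $(M_\infty,D_\infty,\dr_\infty)$. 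Thus $\eta_\infty=(\mathfrak k_\infty)^{\C}$ is reductive, which is the assertion; on the group level the identity component of $G_\infty$ is the complexification of $K_\infty^0$.

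The main obstacle is not the formal computation but its justification on the incomplete, singular locus $M_\infty^{\mathrm{reg}}\setminus D_\infty$: one must (i) establish sharp enough regularity and decay of $\theta_X$ near $D_\infty$, and (ii) show that every boundary term produced by the Bochner integration vanishes across both $\mathrm{Sing}(M_\infty)$ and $D_\infty$. The codimension-$\ge2$ bound on $\mathrm{Sing}(M_\infty)$ disposes of the first locus by a capacity/cutoff argument, while near $D_\infty$ it is the cone-angle condition $b_r\in(0,1)$ (with $b_r=p_r/q$ rational, keeping the conic model over $\D$ tractable) that makes the relevant curvature and gradient quantities integrable against $\dr_\infty^{n}$, so that the error terms disappear in the limit; the sign bookkeeping for the twist $(1-T)(\sum_r 2\pi b_r[D_{r,\infty}]+b_0\al_{0,\infty})$ in the Bochner identity also requires care. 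An alternative, more pluripotential-theoretic route would derive the same splitting from strict convexity of the (twisted conical) Ding functional along weak geodesics transverse to $G_\infty$, which bypasses part of the conic regularity theory, but the Bochner argument above stays closest to the framework of Theorem~\ref{thm-main}.
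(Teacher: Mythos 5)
Your overall strategy is the same Matsushima-type argument the paper uses (produce a holomorphy potential for each $X\in\eta_\infty$, run a Bochner identity, conclude $\eta_\infty$ is the complexification of a Killing algebra), but the two places where you lean on unestablished structure are exactly the places where the paper does the real work, and as written they are gaps. First, you obtain the potential $\theta_X$ from $H^1(M_\infty,\mathcal O_{M_\infty})=0$ for klt Fano varieties together with ``weighted Schauder analysis of the conic Laplacian'' of $\dr_\infty$ near $D_\infty$. Nothing of this kind is available here: the paper only proves that $M_\infty$ is a normal variety (Theorem \ref{thm-T5.9}), not that it is klt, and it establishes no conical asymptotics or Schauder theory for $\dr_\infty$ along $D_\infty$ --- the limit metric is only known to be smooth away from $\Sc\cup D_\infty$, and on the regular part it satisfies $Ric(\dr_\infty)=T\dr_\infty+(1-T)b_0\al_0$ (the limit of $D$ is absorbed into the singular set when $T<1$), not the conical equation with cone angles along $D_\infty$ that you write down. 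The paper instead constructs the potential concretely: pulling $\dr_\infty$ back by the one-parameter group gives $(\dr_\infty+\ddb\xi_s)^n=e^{-T\xi_s}\dr_\infty^n$ on the regular part with $\xi_s=\psi_s-\psi_0$, boundedness and continuity of $\psi_0$ come from the partial $C^0$-estimate, and $u=\lim_{s\to0}s^{-1}\xi_s$ is controlled in $W^{1,2}$ and $L^\infty$ by cutoff integration by parts (cutoffs built from $\tau_\infty$), the eigenvalue estimate $\lambda_1(E_\de)\geq 4n$, and Moser iteration.

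Second, the step you yourself flag as the ``main obstacle'' --- integrating the Bochner--Kodaira identity directly on $M_\infty^{\mathrm{reg}}\setminus D_\infty$ and killing the boundary terms across $D_\infty$ --- is not actually carried out, and with only the regularity proved in the paper (no a priori integrability of $|\bar\nabla\dbar\theta_X|^2$ or decay of $\nabla\theta_X$ near $D_\infty$) it cannot be justified as stated. The paper avoids this entirely: it first shows $\theta_\infty$ solves $-\Delta_\infty\theta_\infty=T\theta_\infty$ only in the weak sense, then uses the smooth approximating metrics $\tilde{\dr}_i$ of Theorem \ref{thm-shen} with $Ric(\tilde{\dr}_i)\geq t_i\tilde{\dr}_i$ to realize $\theta_\infty$ as a limit of genuine eigenfunctions $\theta_i$ with eigenvalues $\lambda_i\to T$, and applies Bochner on the smooth manifolds, where $\int_M|\bar\nabla\dbar\theta_i|^2\tilde{\dr}_i^n\leq\lambda_i(\lambda_i-t_i)\int_M|\partial\theta_i|^2\tilde{\dr}_i^n\to0$; reductivity then follows after extending the resulting holomorphic field to $\CP^N$. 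If you want to keep your route, you would have to supply the klt/cohomology-vanishing statement and a conic regularity theory for $\dr_\infty$ that this paper deliberately does not develop; the approximation-plus-spectral-convergence argument is the mechanism that replaces them.
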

Finally, by the partial $C^0$-estimate and the corollary above, we have the following version of Yau-Tian-Donaldson Conjecture:
\begin{corollary}\label{cor-YTD}
Given a Fano manifold $M$ without holomorphic vector fields which induce equivariant actions on $M.$ If $M$ is $K$-stable, there exists a \ka-Einstein metric on $M.$
\end{corollary}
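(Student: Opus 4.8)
The plan is to produce the \ka-Einstein metric as the endpoint of the continuity path \eqref{eq:path}. First I would fix an admissible configuration: choose a rational $b_0\in(0,1)$, a general simple normal crossing divisor $D=\sum_r D_r\in(1-b_0)c_1(M)$ with rational coefficients $b_r<1$ (for instance a single smooth divisor, by Bertini applied to a large power of $K_M^{-1}$), and a smooth positive $(1,1)$-form $\al_0\in c_1(M)$. At $t=0$ the equation \eqref{eq:path} reads $Ric(\dr_0)=\sum_r 2\pi b_r[D_r]+b_0\al_0$, i.e. a \MA\ equation $(\dr_0+\ddb\varphi_0)^n=e^{h-g}\dr_0^{n}$ of Calabi--Yau type, $g$ being the twist potential (conically singular, but integrable since $b_r<1$); this is solved by Yau's method \cite{Yau} together with the conic theory of \cite{JMR}. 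Let $T\in(0,1]$ be the supremum of those $t$ for which \eqref{eq:path} admits a conic solution $\dr_t$ on $(M,\cup_rD_r)$. Since at $t=1$ the divisor contribution $(1-t)\sum_r 2\pi b_r[D_r]$ vanishes, a solution at $t=1$ is a smooth \ka-Einstein metric, so the corollary follows once we show $T=1$ and that the supremum is attained.

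\emph{Openness.} For $t\in(0,1)$ the twist $(1-t)(\sum_r 2\pi b_r[D_r]+b_0\al_0)\geq (1-t)b_0\al_0$ is strictly positive, so the linearized operator $\Delta_{\dr_t}+t$ of \eqref{eq:path} has trivial kernel (a kernel element would, by the usual Bochner argument, be a holomorphic vector field incompatible with a strictly positive twist; cf. \cite{Sz}) and is an isomorphism on the appropriate conic H\"older spaces; the implicit function theorem then makes the solution set open in $(0,1)$, and together with $t=0$ this places $[0,T)$ inside the solution set. One cannot expect openness at $t=1$, where holomorphic vector fields may obstruct invertibility, so $t=1$ must be reached by compactness.

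\emph{Closedness via the partial $C^0$-estimate.} Take $t_i\nearrow T$ with solutions $\dr_{t_i}$; then $Ric(\dr_{t_i})\geq t_i\dr_{t_i}$ yields a uniform positive lower Ricci bound for $t_i$ near $T$, while the volume and (by Bonnet--Myers) the diameter stay bounded. Theorem \ref{thm-main} bounds the Bergman functions $\rho_{\dr_{t_i},l}$ uniformly from below, so the analysis of \cite{DS,CDS2,CDS3,Ti15}, adapted to the conic path, applies: after acting by $\sigma_i\in G=SL(N_l+1)$, a subsequence of the triples $(M,\cup_rD_r,\dr_{t_i})$ embedded in $\mathbb{CP}^{N_l}$ converges in the Gromov--Hausdorff topology to $(M_\infty,D_\infty,\dr_\infty)$, with $M_\infty$ a $\mathbb{Q}$-Fano variety and $\dr_\infty$ a weak conic (twisted) \ka-Einstein current solving the $t=T$ case of \eqref{eq:path}. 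If $T<1$: the twisted Donaldson--Futaki invariant of any nontrivial special degeneration of $M$ acquires a strictly positive contribution from the twist $(1-T)(\sum_r 2\pi b_r[D_r]+b_0\al_0)$, so (using $K$-semistability for the remainder) no degeneration can occur and $M_\infty\cong M$; by Bando--Mabuchi type uniqueness for the twisted conic equation, $\dr_\infty$ is the conic solution at $t=T$, hence $T$ lies in the solution set and openness pushes the path past $T$, contradicting maximality --- so $T=1$. If $T=1$: now $D_\infty$ has cone angle $2\pi$ and $\dr_\infty$ is a weak \ka-Einstein metric on the $\mathbb{Q}$-Fano variety $M_\infty$, which by \cite{DS} is the central fibre of a special test configuration $\mathcal{X}$ for $M$; existence of a \ka-Einstein metric on $M_\infty$ forces $\mathrm{DF}(\mathcal{X})\leq 0$ by a Berman type argument \cite{Ber} (cf. \cite{LTW1,LTW2}), while $K$-stability forces $\mathrm{DF}(\mathcal{X})\geq 0$ with equality only for a product, so $\mathcal{X}$ is a product and $M_\infty\cong M$. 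Finally, if the twisting automorphisms $\sigma_i$ diverged in $G$, then by the reductivity of $\eta_\infty$ (Corollary \ref{cor-reductivity}) they would generate a nontrivial one-parameter algebraic subgroup of $\mathrm{Aut}(M)$, i.e. a holomorphic vector field inducing an equivariant action on $M$, contrary to hypothesis; hence $\{\sigma_i\}$ is precompact, and along a further subsequence $\dr_{t_i}\to\sigma_\infty^{*}\dr_\infty$, a smooth \ka-Einstein metric on $M$.

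\emph{Where the difficulty lies.} The hard part is the $T=1$ endpoint, where three threads must meet: promoting Gromov--Hausdorff convergence to the statement that the limit is a \ka-Einstein \emph{manifold} (regularity of $\dr_\infty$ and smoothness of $M_\infty$ once the degeneration is trivial); running $K$-stability through the Donaldson--Sun correspondence to exclude jumping; and invoking Corollary \ref{cor-reductivity} together with the hypothesis on vector fields to keep the twisting automorphisms from escaping to infinity. Each ingredient is in place for the classical conical path; the real content is to check that it survives the mixed conic--twisted path \eqref{eq:path}, the point being that Theorem \ref{thm-main} supplies exactly the input that starts the Donaldson--Sun machinery along \eqref{eq:path}.
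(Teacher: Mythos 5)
Your skeleton (solve at $t=0$, openness, closedness via the partial $C^0$-estimate, reductivity and $K$-stability) is the paper's, but at both technical pinch-points your argument has genuine gaps and deviates from what the paper actually does. For openness you simply assert that $\Delta_{\dr_t}+t$ ``is an isomorphism on the appropriate conic H\"older spaces.'' For cone metrics along a \emph{simple normal crossing} divisor no such Schauder/Fredholm theory is available (the linear theory of \cite{Do,JMR} covers a single smooth divisor), and this is precisely the difficulty Lemma \ref{lem-open} is built to circumvent: the paper approximates $\dr_{t_0}$ by the smooth metrics of Theorem \ref{thm-shen}, proves a uniform estimate $||\psi||_{C^{2,\al}(M)}\leq C_\al||\mathcal{L}_{t_0,\de}\psi||_{C^{\al}(M)}$ for the approximating linearizations by a compactness/contradiction argument (the limiting kernel of $\Delta_{t_0}+t_0$ being excluded by an integration by parts over $M_\e$ using $Ric(\dr_{t_0})>t_0\dr_{t_0}$, not by any statement about holomorphic vector fields), and only then invokes the inverse function theorem. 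Without some substitute for this step your openness claim is unsupported.

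For closedness you route through the Donaldson--Sun/Berman machinery with a case split: for $T<1$ a ``strictly positive twisted Donaldson--Futaki contribution'' plus $K$-semistability to exclude jumping and a Bando--Mabuchi type uniqueness for the twisted conic equation; for $T=1$ special test configurations and Berman's $\mathrm{DF}\leq 0$ argument. None of these inputs is established for the mixed conic--twisted path \eqref{eq:path} (in particular the twist itself contains the degenerating classes $[D_r]$, so the positivity claim is not automatic, and uniqueness for twisted conic \ka-Einstein metrics along SNC divisors is not quoted or proved here), so as written this is a gap rather than a proof. The paper avoids all of it and treats $T\leq 1$ uniformly: if $T\notin I$ then $||\varphi_{t_i}||_{C^0}\to\infty$, the partial $C^0$-estimate together with Corollary \ref{cor-reductivity} and GIT produces a $C^*$-subgroup degenerating $(M,D)$ to $(M_\infty,D_\infty)\ncong(M,D)$, and the identity $TM_{\dr}(\psi_s)=M_{\dr,T}(\psi_s)-(1-T)M_{\dr,0}(\psi_s)$ combined with Lemma \ref{lem-DT} (the twisted Mabuchi energy is bounded below whenever \eqref{eq:ma-path} is solvable, via the twisted Ding energy and the log $\al$-invariant) gives $Re(f_{M_\infty}(X))\leq 0$, contradicting Tian's $K$-stability (Definition \ref{def-K}). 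Note also that your last step misuses reductivity: Corollary \ref{cor-reductivity} concerns the stabilizer of the limit inside $SL(N+1,\C)$ and is used in the paper to \emph{construct} the degenerating $C^*$-action, not to convert divergent $\sigma_i$ into a holomorphic vector field on $M$; the hypothesis on equivariant vector fields is tied to the need to preserve the divisors along \eqref{eq:path}, as the paper indicates in its comparison with \cite{DSz}.
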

Compared with Datar-Sz\'ekelyhidi's $G$-equivariant case \cite{DSz}, our result focuses on the case that no holomorphic vector fields induce equivariant automorphisms as it is difficult to preserve those divisors in \eqref{eq:path} under equivariant automorphisms.

Let us briefly describe the main ideas of this paper. We mainly follow the steps in \cite{Ti15} combined with some ideas from \cite{Sz}. First we need to establish the geometric limit structure as $t\to T.$ For this target we need to approximate
the conic metric in \eqref{eq:path} by smooth metrics with uniform Ricci lower bound, which was done by the second author in \cite{Sh} based on the techniques in \cite{Ti15}. Next we make use of Cheeger-Colding-Tian theory combined with Carron's technique \cite{Ca} as \cite{Sz} to establish the limit structure, especially the structure of the tangent cone. Then we can modify the smooth convergence part in \cite{Ti15} and then establish the partial $C^0$-estimate. Finally we follow \cite{Ti15} to complete the two corollaries.

\noindent{\bf Acknowledgment.} The authors want to express their sincerely acknowledgement to Professor Gang Tian and Xiaohua Zhu for suggesting this problem and a lot of discussions and encouragements. They also want to thank Chi Li, Zhenlei Zhang and Feng Wang for their beneficial advice on this work. Finally the second author wants to thank ISM of UQ\'AM, McGill University and BICMR for their hospitality during this work.

\section{The structure of the Gromov-Hausdorff limit space}

On \ka\ manifold $M$ associated with a simple normal crossing divisor $D=\sum_{r=1}^{m}D_r,$ if a positive $(1,1)$-current $\dr$ is a smooth metric out of
$D$ and without loss of generality, in the local holomorphic coordinate chart around a point $p$ lying in the intersection of $D_1,\cdots,D_k$ where locally $D_r=\{z_r=0\},$ $\dr$ is asymptotically to the model metric
$$\dr_{cone}:=\sqrt{-1}\left(\sum_{r=1}^{k}\frac{dz_r\wedge d\bar{z}_r}{|z_{r}|^{2(1-\be_r)}}+\sum_{r=k+1}^{n}dz_r\wedge d\bar{z}_r\right),$$ then we call
$\dr$ is a conic \ka\ metric with cone angles $2\pi\be_r$ along $D_r$ for $r=1,\cdots,k.$
To apply the classical Cheeger-Colding-Tian theory \cite{CC,CCT} to the conic metrics, as \cite{Ti15}, we need to approximate the conic metric defined in \eqref{eq:path} by smooth \ka\ metrics with uniform lower Ricci curvature bound. By \eqref{eq:path} it follows that $Ric(\dr_t)\geq t\dr_t,$ then we have the following approximation theorem by the second author in \cite{Sh}:
\begin{theorem}\emph{(Shen \cite{Sh})}\label{thm-shen}
Given a \ka\ manifold $(M,\dr_0)$ with $D=\sum_{r=1}^{m}D_r$ which have simple normal crossings and each component $D_i$ is irreducible and semi-ample, and a
conic \ka\ metric $\dr=\dr_{0}+\ddb\varphi$ with cone angles $2\pi\be_r\ (0<\be_r<1, 1\leq r\leq m)$ along $D_{i}$ and $\varphi\in Psh(M,\dr_0)$ is smooth on $M\setminus D$. If the conic \ka\ metric $\dr$ satisfies that $Ric(\dr)\geq\mu\dr,$ then for any $\de>0$, there exists a smooth \ka\ metric $\dr_{\de}$ in the same \ka\ class to $\dr$ satisfying that $Ric(\dr_\de)\geq\mu\dr_\de$ which converges to $\dr$ in the Gromov-Hausdorff topology on $M$ and in the smooth topology outside $D$ as $\delta$ tends to $0.$
\end{theorem}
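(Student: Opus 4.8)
The plan is to obtain $\omega_\delta$ as solutions of a family of complex \MA\ equations in which the conic part of the volume form of $\omega$ has been smoothed out, following the scheme of \cite{Ti15}. Since each $D_r$ is semi-ample, the line bundle $L_r=\mathcal{O}_M(D_r)$ carries a smooth Hermitian metric $h_r$ with semipositive curvature $\Theta_r:=\Theta_{h_r}\geq 0$ (pull back the Fubini--Study metric under a map defined by a base-point-free power of $L_r$ and take the corresponding root); fix also a defining section $s_r$ of $D_r$ normalised so that $\sup_M|s_r|_{h_r}\leq 1$. Writing $\omega=\omega_0+\ddb\varphi$, the conic structure together with $Ric(\omega)\geq\mu\omega$ and the Poincar\'e--Lelong formula shows that $\varphi$ solves, on $M\setminus D$,
\[
(\omega_0+\ddb\varphi)^n \;=\; e^{F-\mu\varphi}\,\prod_{r=1}^{m}|s_r|_{h_r}^{2(\beta_r-1)}\,\omega_0^n ,
\]
with $F$ a smooth function satisfying $\ddb F+\sum_r(1-\beta_r)\Theta_r\leq Ric(\omega_0)-\mu\omega_0$ on $M$; this last inequality between smooth forms is exactly the statement that the non-$[D_r]$ part of $Ric(\omega)-\mu\omega$ is nonnegative. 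I would then let $\varphi_\delta$ be the solution of
\[
(\omega_0+\ddb\varphi_\delta)^n \;=\; c_\delta\, e^{F-\mu\varphi_\delta}\,\prod_{r=1}^{m}\bigl(\delta^2+|s_r|_{h_r}^2\bigr)^{\beta_r-1}\,\omega_0^n ,
\]
where $c_\delta\to 1$ normalises the total mass; the right-hand side is now a smooth positive density, so $\omega_\delta:=\omega_0+\ddb\varphi_\delta$ is a smooth \ka\ metric in the class $[\omega]$.

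The key point is that this smoothing does not destroy the Ricci lower bound. Taking $-\ddb\log$ of the $\delta$-equation gives
\[
Ric(\omega_\delta) \;=\; \mu\omega_\delta \;+\;\bigl(Ric(\omega_0)-\mu\omega_0-\ddb F\bigr)\;+\;\sum_{r=1}^{m}(1-\beta_r)\,\ddb\log\bigl(\delta^2+|s_r|_{h_r}^2\bigr),
\]
and a direct computation --- expanding $\ddb\log\bigl(\delta^2+|s_r|_{h_r}^2\bigr)$ and using the Cauchy--Schwarz inequality $\sqrt{-1}\,\langle D's_r,s_r\rangle\wedge\overline{\langle D's_r,s_r\rangle}\leq |s_r|_{h_r}^2\,\sqrt{-1}\,\langle D's_r,D's_r\rangle$ --- yields the pointwise bound
\[
\ddb\log\bigl(\delta^2+|s_r|_{h_r}^2\bigr) \;\geq\; -\,\frac{|s_r|_{h_r}^2}{\delta^2+|s_r|_{h_r}^2}\,\Theta_r \;\geq\; -\,\Theta_r ,
\]
where the last inequality uses $\Theta_r\geq 0$. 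Combining this with the differential inequality for $F$ yields $Ric(\omega_\delta)\geq\mu\omega_\delta$, exactly as desired. (When $\mu\leq 0$ this is all that is needed at this step.)

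It remains to establish the uniform estimates and extract the convergence. A $\delta$-independent bound $\|\varphi_\delta\|_{L^\infty}\leq C$ follows from standard $L^\infty$-estimates for complex \MA\ equations, since the densities $\prod_r(\delta^2+|s_r|_{h_r}^2)^{\beta_r-1}$ are uniformly bounded in $L^p(\omega_0^n)$ for some $p>1$; when $\mu>0$ this bound, together with solvability of the $\delta$-equations, is obtained by the continuity method, using that $\bigl(\delta^2+|s_r|_{h_r}^2\bigr)^{\beta_r-1}\leq|s_r|_{h_r}^{2(\beta_r-1)}$ so that the conic potential $\varphi$ itself can serve as a barrier for the family --- this is where the a priori estimates of \cite{Ti15} are imported. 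A uniform Laplacian estimate relative to the regularised model metric $\omega_0+\sum_r\ddb\bigl(\delta^2+|s_r|_{h_r}^2\bigr)^{\beta_r}$ (up to small positive constants), followed by Evans--Krylov and Schauder theory, shows that on compact subsets of $M\setminus D$ the $\omega_\delta$ satisfy uniform $C^\infty$-bounds, and there the densities converge smoothly to that of $\omega$; hence $\varphi_\delta\to\varphi$ and $\omega_\delta\to\omega$ in $C^\infty_{loc}(M\setminus D)$. Finally, since $Ric(\omega_\delta)\geq\mu\omega_\delta$ and the potentials are uniformly bounded, the $\omega_\delta$ have uniformly bounded volume and diameter, and $D$ has real codimension two; excising a tubular neighbourhood of $D$ of small volume and using the smooth convergence on its complement produces $\e_\delta$-almost isometries between $(M,\omega_\delta)$ and $(M,\omega)$, whence $d_{GH}\bigl((M,\omega_\delta),(M,\omega)\bigr)\to 0$.

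The hard part is the second step: arranging that the Ricci lower bound survives with the \emph{same} constant $\mu$ rather than $\mu-o(1)$. This depends on the precise algebraic form of the regularisation --- smoothing $|s_r|^2$ inside a power with the negative exponent $\beta_r-1$ --- and on the semipositivity of $\Theta_r$, which is precisely what semi-ampleness of the $D_r$ provides; a cruder estimate on $\ddb\log\bigl(\delta^2+|s_r|_{h_r}^2\bigr)$ would lose a definite amount of curvature and give only $Ric(\omega_\delta)\geq(\mu-o(1))\omega_\delta$. A second technical point, relevant only when $\mu>0$, is the solvability of the approximating \MA\ equations together with the $\delta$-independent $C^0$-estimate, which has to be arranged with $\omega$ itself playing the role of a comparison object.
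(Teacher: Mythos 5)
Your strategy coincides with the one behind the quoted result (the paper itself does not reprove Theorem \ref{thm-shen}; it cites \cite{Sh}, which follows the smoothing scheme of the appendix of \cite{Ti15}): regularize the Monge--Amp\'ere density by replacing $|s_r|_{h_r}^{2(\beta_r-1)}$ with $(\delta^2+|s_r|_{h_r}^2)^{\beta_r-1}$, solve the resulting smooth equations, and preserve the Ricci lower bound via the Cauchy--Schwarz computation $\sqrt{-1}\partial\bar{\partial}\log(\delta^2+|s_r|_{h_r}^2)\geq-\frac{|s_r|^2_{h_r}}{\delta^2+|s_r|^2_{h_r}}\Theta_r\geq-\Theta_r$, which is exactly where semi-ampleness (a semipositive curvature representative $\Theta_r$) is used. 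That part of your write-up is correct and is the right mechanism; the remaining small points (smoothness of $F$ across $D$ for a general conic metric, and the argument that geodesics do not shortcut through a thin tube around $D$ in the Gromov--Hausdorff step) need a word or two but are standard.

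The genuine gap is the case $\mu>0$, which is the case actually used in this paper ($\mu=t_i>0$). You claim that solvability of the $\delta$-equations and a $\delta$-independent $C^0$-bound follow ``by the continuity method, using that $(\delta^2+|s_r|^2)^{\beta_r-1}\leq|s_r|^{2(\beta_r-1)}$ so that the conic potential $\varphi$ itself can serve as a barrier.'' This does not work. For $(\omega_0+\ddb{\varphi_\delta})^n=c_\delta e^{F-\mu\varphi_\delta}f_\delta\,\omega_0^n$ with $\mu>0$, the zeroth-order term has the wrong sign for any comparison or maximum-principle argument: at a maximum point of $\varphi_\delta-\varphi$ the pointwise inequality $f_\delta\leq f_0$ yields an estimate in the useless direction, and no comparison principle can hold since solutions of such Fano-type equations need not be unique; likewise the uniform $L^p$-bound on $f_\delta$ does not give a $C^0$-estimate once the exponential carries $-\mu\varphi_\delta$ with $\mu>0$. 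This is precisely the hard step and is the main content of \cite{Sh} (following \cite{Ti15}): there one runs a continuity path in the coefficient $t\in[0,\mu]$ in front of $\varphi_\delta$, obtains openness from the strict eigenvalue bound $\lambda_1>t$ furnished by the improved Ricci estimate $Ric(\omega_{t,\delta})\geq t\,\omega_{t,\delta}+(\mu-t)(\cdot)$ along the path, and must then prove a $C^0$-estimate uniform in $\delta$ up to $t=\mu$ by a genuine argument exploiting the given conic solution (through the associated energy functionals), not by a pointwise barrier. As written, your proposal proves the theorem only for $\mu\leq 0$ and defers exactly the part of the statement that the present paper needs.
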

By Theorem \ref{thm-shen}, for any sequence $t_i\to T$ and $\dr_i:=\dr_{t_i},$ there exist a sequence of smooth \ka\ metrics $\tilde{\dr}_i$ satisfying that
\begin{enumerate}
\item [(A1)] $[\tilde{\dr}_i]=[\dr_i]\in 2\pi c_1(M);$
\item [(A2)] $Ric(\tilde{\dr}_i)\geq t_i\tilde{\dr}_i;$
\item [(A3)] the Gromov-Hausdorff distance $d_{GH}(\dr_i,\tilde{\dr}_i)\leq 1/i.$
\end{enumerate}
Then by the Gromov compactness theorem, without loss of generality $(M,\tilde{\dr}_i)$ converges to a metric space $(M_\infty,d_\infty)$ in the Gromov-Hausdorff topology. And it follows from (A3) that $(M,\dr_i)$ also converges to $(M_\infty,d_\infty)$ in the Gromov-Hausdorff topology. Similar to Theorem 3.1 of \cite{Ti15} we have the following
geometric description of $(M_\infty,d_\infty):$
\begin{theorem}\label{thm-limit space}
There is a closed subset $\Sc\subset M_\infty$ of Hausdorff codimension at least $2$ such that $M_\infty\setminus\Sc$ is a smooth \ka\ manifold and $d_\infty$ is induced by a twisted \ka-Einstein metric $\dr_\infty$ outside $\Sc$ which satisfies that $Ric(\dr_\infty)=T\dr_\infty+(1-T)b_0\al_0.$ If $T<1$ $\dr_i$ converges to $\dr_\infty$ in $C^\infty$ topology. If $T=1$ the singular set $\Sc$ has codimension at least $4$ and $\dr_\infty$ extends to a smooth \ka-Einstein metric on $M_\infty\setminus\Sc.$
\end{theorem}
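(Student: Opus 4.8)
The plan is to follow closely the strategy of Theorem 3.1 in \cite{Ti15}, now adapted to the mixed conic/twisted setting of the path \eqref{eq:path}, using the smooth approximations $\tilde{\dr}_i$ supplied by Theorem \ref{thm-shen} together with properties (A1)--(A3). First I would set up the analytic framework: writing $\tilde{\dr}_i=\dr_0+\ddb\tilde{\varphi}_i$, the normalized potentials and the twisted \MA\ equations associated to \eqref{eq:path} give, away from $D$, a uniform bound $Ric(\tilde{\dr}_i)\geq t_i\tilde{\dr}_i$ with $t_i\to T>0$, so that for $i$ large $Ric(\tilde{\dr}_i)\geq \tfrac{T}{2}\tilde{\dr}_i$, and the volumes are fixed at $V$ by (A1). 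Hence Cheeger--Colding--Tian theory \cite{CC,CCT} applies to $(M,\tilde{\dr}_i)$: passing to a subsequence, the pointed Gromov--Hausdorff limit $(M_\infty,d_\infty)$ admits a decomposition $M_\infty=\Sc\sqcup(M_\infty\setminus\Sc)$ where $M_\infty\setminus\Sc$ is open, and $\Sc$ is closed of Hausdorff codimension at least $2$; on the regular part the convergence is in the $C^{1,\al}$ topology of Riemannian metrics and $d_\infty$ is induced by a limit \ka\ metric $\dr_\infty$. By (A3) the same limit is the Gromov--Hausdorff limit of the conic metrics $(M,\dr_i)$.

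Next I would identify the equation satisfied by $\dr_\infty$ on the regular part. The divisor contributions $\sum_r 2\pi b_r[D_r]$ in \eqref{eq:path} are carried by the sets $D_r$, which under the convergence (being of real codimension $2$ in $M$ with bounded volume) contribute to $\Sc$ or disappear in the limit of the Ricci form, so that the $(1-t_i)\sum_r 2\pi b_r[D_r]$ term drops as $t_i\to T$ and the smooth twisting term $(1-t_i)b_0\al_0$ converges to $(1-T)b_0\al_0$ on $M_\infty\setminus\Sc$. Combined with $C^{1,\al}$ convergence of the metrics, this yields $Ric(\dr_\infty)=T\dr_\infty+(1-T)b_0\al_0$ weakly, hence classically by elliptic regularity, on $M_\infty\setminus\Sc$, and an Evans--Krylov/Schauder bootstrap promotes $\dr_\infty$ to a smooth \ka\ metric there. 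When $T<1$ the twisted \MA\ equation is uniformly elliptic with a definite twist $(1-T)b_0\al_0>0$, which (as in \cite{Sz}) gives local higher-order estimates for $\dr_i$ itself away from $D$, upgrading the convergence $\dr_i\to\dr_\infty$ to $C^\infty$. When $T=1$ the twisting term vanishes and $\dr_\infty$ is a genuine \ka--Einstein metric with $Ric=\dr_\infty$; in this case the stronger $\e$-regularity of Cheeger--Colding--Tian for Einstein metrics forces the tangent cones to have no codimension-$2$ or codimension-$3$ strata, so $\Sc$ has Hausdorff codimension at least $4$ and $\dr_\infty$ extends smoothly across $M_\infty\setminus\Sc$.

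The codimension and structure statements for $\Sc$ require a little care: a priori Cheeger--Colding theory only gives codimension $2$, and one must rule out the possibility that the divisors $D_r$ themselves create new singular strata in the limit beyond what the tangent-cone analysis controls. Here I would invoke the stratification of the limit together with the fact (from Theorem \ref{thm-shen}, smooth convergence outside $D$) that in the interior of the regular locus the metrics converge smoothly, so the only singularities of $M_\infty$ are the metric singularities detected by Cheeger--Colding--Tian; the quantitative volume-ratio monotonicity (using the Ricci lower bound and the fixed total volume) then pins down the codimension. Carron's $L^2$-cohomology technique \cite{Ca}, as used in \cite{Sz}, is what allows one to control the behavior near $\Sc$ well enough to conclude $\dr_\infty$ is a genuine twisted \ka-Einstein current on all of $M_\infty$ and that $\Sc$ is (complex-)analytic in the relevant sense.

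The main obstacle is the last point: establishing that $\Sc$ is honestly small and that $\dr_\infty$ has the claimed regularity across it, i.e. reconciling the metric-geometric singular set from Cheeger--Colding--Tian with the complex-analytic structure coming from the divisors $D_r$ and the twist $\al_0$. In the pure conic \ka--Einstein case this is handled in \cite{Ti15,CDS2}; in our mixed setting the presence of the non-trivial smooth twist when $T<1$ means $\dr_\infty$ is \emph{not} Einstein, so one cannot directly quote the Einstein $\e$-regularity theorem, and one must instead exploit the uniform ellipticity of the twisted equation — this is precisely where the argument of \cite{Sz} enters and must be combined with \cite{Ti15} to carry the codimension-$2$ (rather than codimension-$4$) conclusion in the $T<1$ case.
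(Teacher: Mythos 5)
Your outline follows the right general strategy (smooth approximations from Theorem \ref{thm-shen}, Cheeger--Colding theory, the Sz\'ekelyhidi-type argument away from the divisors), but there are two genuine gaps. The more serious one is your treatment of the case $T=1$: you claim that ``the stronger $\e$-regularity of Cheeger--Colding--Tian for Einstein metrics'' forces $\Sc$ to have codimension at least $4$. That theorem requires a two-sided Ricci bound (or the Einstein condition) on the \emph{approximating sequence}, which is not available here: the smooth metrics $\tilde{\dr}_i$ only satisfy the one-sided bound $Ric(\tilde{\dr}_i)\geq t_i\tilde{\dr}_i$, and the conic metrics $\dr_i$ have unbounded curvature along $D$. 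The fact that the limit is Einstein on its regular part does not by itself control the strata of the tangent cones. The paper closes this gap by verifying that
$$\int_{M}|Ric(\tilde{\dr}_i)-\tilde{\dr}_i|\,\tilde{\dr}_i^{n}\leq 2(1-t_i)\int_{M}\tilde{\dr}_i^{n}\to 0,$$
so that $(M,\tilde{\dr}_i)$ is an almost \ka-Einstein sequence in the sense of Tian--Wang \cite{TWb}, and then invokes the Tian--Wang structure theorem to get codimension at least $4$ for $\Sc$ and the smooth \ka-Einstein extension of $\dr_\infty$. Without this (or an equivalent substitute), your codimension-$4$ and smooth-extension claims at $T=1$ are unproven.

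A secondary problem is your assertion that Cheeger--Colding theory gives $C^{1,\al}$ convergence on the regular part and that the regular set is open: with only a lower Ricci bound this is not automatic, and it is exactly the point that has to be proved. The paper's mechanism for $T<1$ is: a point of the regular set $\mathcal{R}$ has balls with volume ratio close to Euclidean, which forces those balls to stay away from the divisors (the conic asymptotics produce a definite volume deficit near $D$); then Proposition \ref{prop-sze8} (adapted from Proposition 8 of \cite{Sz}) gives a uniform two-sided Ricci bound on the half-balls, and Anderson's $\e$-regularity \cite{An} upgrades this to uniform curvature bounds and hence $C^\infty$ convergence of $\dr_i$ itself to the twisted \ka-Einstein metric $\dr_\infty$. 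Your appeal to ``uniform ellipticity of the twisted equation'' gestures at the same circle of ideas but skips the volume-ratio/away-from-divisor step and the Ricci-bound proposition, which are the actual engine of the argument; similarly, Carron's technique is not what carries this theorem in the paper. You should restructure the $T<1$ argument around that chain and replace the Einstein $\e$-regularity appeal at $T=1$ by the almost \ka-Einstein argument of Tian--Wang.
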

\begin{proof}
The proof is almost the same to \cite{Ti15}. We also set $\mathcal{R}$ as the regular part of $M_\infty$ where the tangent cone is $\R^{2n}.$ When $T=1,$ we can show that
$$\int_{M}|Ric(\tilde{\dr}_i)-\tilde{\dr}_i|\tilde{\dr}_i^{n}\leq\int_{M}(Ric(\tilde{\dr}_i)-t_i\tilde{\dr}_i)\tilde{\dr}_i^{n}
+(1-t_i)\int_{M}\tilde{\dr}_i^{n}=2(1-t_i)\int_{M}\tilde{\dr}_i^{n}\to 0,$$
which implies that $(M,\tilde{\dr}_i)$ composite an almost \ka-Einstein sequence defined by Tian-Wang in \cite{TWb}. By \cite{TWb} the singular set $\Sc$ in $M_\infty$ has codimension at least 4 and $d_\infty$ is induced by the smooth \ka-Einstein metric $\dr_\infty$ on $M_\infty\setminus\Sc.$

When $T<1$ away from divisors, similar to \cite{Ti15}, we first show that $\mathcal{R}$ is open. For any $x\in\mathcal{R}$ there are a sequence of points $x_i\in(M,\dr_i)$ which converge to $x.$ Then by the definition of $\mathcal{R}$ that the volume of balls $B_{\bar{r}}(x_i,\dr_i)$ in $(M,\dr_i)$ are uniformly close to the Euclidean volume thus those balls should be away from the divisors along which the metrics are asymptotically conical. In this case apply the proposition \ref{prop-sze8} below which is modified from \cite{Sz} it follows that the Ricci curvature of balls $B_{\frac{\bar{r}}{2}}(x_i,\dr_i)$ is uniformly bounded. Thus it follows from Anderson's $\e$-regularity theorem \cite{An} that there is a uniform curvature bound holds $B_{\frac{\bar{r}}{4}}(x_i,\dr_i)$ in $(M,\dr_i)$ which implies that $\dr_i$ smoothly converges to a twisted \ka-Einstein metric $\dr_\infty$ on $M_\infty\setminus\Sc,$ and moreover it satisfies that $Ric(\dr_\infty)=T\dr_\infty+(1-T)b_0\al_0$ and induces $d_\infty.$ Furthermore the stratification of $\Sc$ with codimension at least 2 are all the same. In fact the proof shows that the limit of divisors $D_r$ lies in $\Sc.$
\end{proof}

Let us recall some basic facts from \cite{CC}. The stratification of $\Sc$ can be described as
$\Sc_0\subset\Sc_1\subset\cdots\subset\Sc_{2n-1},$ where $\Sc_k$ denotes the subset of $\Sc$ where no tangent cone can splits off a factor $\R^{k+1}.$ A tangent cone $\Cc_x$ at $x\in M_\infty$ can be defined as the limit of $(M_\infty,r_{j}^{-1}d_\infty,x)$ for a subsequence $r_j\to 0.$ By Theorem \ref{thm-limit space} when $T<1$ the singular set $\Sc=\Sc_{2n-2}$ and when $T=1$ the singular set $\Sc=\Sc_{2n-4}.$

To give a detailed characterization of the tangent cone, we need the following proposition modified from Proposition 8 in \cite{Sz}, which also has been used in the proof of Theorem \ref{thm-limit space}:
\begin{proposition}\label{prop-sze8}
Fix $T_0,T_1\in(0,1),$ for any $t<T\in(T_0,T_1),$ there exists $R_0,\de>0$ depending on $T_0,T_1$ such that if $I(B_{r_0}(p,\dr_t))>1-\de,$ for $r_0\leq R_0$ then$$|Ric(\dr_t)|\leq 5$$ on $B_{r_0}(p,\dr_t)$ where $I(\Omega_g):=\displaystyle\inf\limits_{B_r(x)\subset\Omega}\frac{Vol B_r(x,g)}{Vol B_r(0,\R^{2n})}.$
\end{proposition}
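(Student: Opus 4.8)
The plan is to follow Proposition 8 in \cite{Sz} closely, the key point being that the twisting term in \eqref{eq:path} involving the divisors $D_r$ must be shown to contribute negligibly once we know the ball is almost Euclidean. First I would argue by contradiction: suppose there is a sequence $t_j < T_j \in (T_0,T_1)$, radii $r_j \le R_j \to 0$, and points $p_j$ with $I(B_{r_j}(p_j,\dr_{t_j})) > 1 - 1/j$ but $|Ric(\dr_{t_j})| > 5$ somewhere on $B_{r_j}(p_j,\dr_{t_j})$. Rescaling the metrics $\dr_{t_j}$ by $r_j^{-2}$ and applying the Bishop--Gromov volume comparison (valid since $Ric(\dr_{t_j}) \ge t_j \dr_{t_j} > 0$), the almost-Euclidean volume ratio forces, by the Cheeger--Colding almost-splitting and volume-cone-implies-metric-cone theorems, that the pointed limit of the rescaled balls is the flat Euclidean ball $B_1(0,\R^{2n})$. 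In particular the limit space is smooth near the base point with zero curvature.

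Next I would control the divisor contribution. The crucial geometric observation, already used in the proof of Theorem \ref{thm-limit space}, is that the conic \ka\ metric $\dr_{t}$ is asymptotically the product cone metric $\dr_{cone}$ near $D_r$, whose tangent cone at a point of $D_r$ is $\R^{2n-2} \times C(S^1_{2\pi b_r})$ with $b_r < 1$; this has volume ratio strictly less than $1$, bounded away from $1$ by a constant depending only on $\max_r b_r$. Hence if $\de$ is chosen small enough (depending on $T_0, T_1$, hence on the fixed geometry), the almost-Euclidean condition $I(B_{r_0}(p,\dr_t)) > 1-\de$ forces $B_{r_0}(p,\dr_t)$ to be a definite distance away from every $D_r$; on that region the twisting current $\sum_r 2\pi b_r[D_r] + b_0 \al_0$ reduces to the smooth form $b_0 \al_0$, which is uniformly bounded independent of $t$. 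Then \eqref{eq:path} gives $Ric(\dr_t) = t\dr_t + (1-t)b_0\al_0$ with a \emph{uniform} two-sided bound on the right-hand side in terms of a $C^0$-bound on $\dr_t$ over that region.

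It remains to promote this to the claimed pointwise bound $|Ric(\dr_t)| \le 5$. Here I would invoke Anderson's $\e$-regularity theorem \cite{An}: the $L^2$-smallness of the full curvature tensor on the rescaled balls (which follows from the almost-Euclidean volume and the Gauss--Bonnet-type / Cheeger--Colding integral curvature estimates) together with the local Ricci bound just established yields, via elliptic bootstrapping on the \MA\ equation, a uniform $C^{2,\al}$ (hence curvature, hence Ricci) bound on a slightly smaller ball; undoing the rescaling and taking $R_0$ small makes the Ricci bound on the unrescaled $B_{r_0}$ as close to $0$ as we like, in particular below $5$, contradicting the assumption. The main obstacle is the second step: one must verify carefully that ``almost-Euclidean volume'' genuinely repels the ball from the divisors at a scale comparable to $r_0$ and not merely at an uncontrolled smaller scale — this uses that the conic structure is a fixed asymptotic model, so the volume-ratio deficit near $D_r$ is quantitative and scale-invariant. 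Once that separation is in hand, the rest is the standard $\e$-regularity machinery exactly as in \cite{Sz}.
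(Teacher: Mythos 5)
Your divisor-repulsion step is exactly the paper's one new point: because $t<T\in(T_0,T_1)$ keeps the cone angles $\be_{r,t}=1-(1-t)b_r\le 1-(1-T_1)b_r$ uniformly below $1$ (note the deficit is governed by $(1-t)b_r$, so it is the hypothesis $T<T_1<1$, not just $\max_r b_r$, that makes it uniform), points of $B_{r_0}(p,\dr_t)$ near $D$ produce sub-balls with a definite volume-ratio deficit, so $I(B_{r_0}(p,\dr_t))>1-\de$ with $\de$ small forces the ball away from the divisors. The paper's proof consists of this observation plus the statement that away from $D$ one runs Proposition 8 of \cite{Sz} verbatim. Your write-up, however, does not reproduce or even invoke the substance of that proposition; it replaces it with a different mechanism, and that is where the argument breaks.

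Away from $D$ the equation reads $Ric(\dr_t)=t\dr_t+(1-t)b_0\al_0\ge 0$, so the whole content of $|Ric(\dr_t)|\le 5$ is an upper bound on the eigenvalues of $\al_0$ measured against $\dr_t$, i.e.\ on $\mathrm{tr}_{\dr_t}\al_0$ — equivalently a \emph{lower} bound on $\dr_t$ in terms of the fixed form $\al_0$. This does not follow from ``$\al_0$ is uniformly bounded'' together with a $C^0$ (upper) bound on $\dr_t$: the comparison you need goes the other way, and no such lower bound on $\dr_t$ is known a priori along the path — producing it is precisely the analytic heart of Sz\'ekelyhidi's Proposition 8 (Chern--Lu inequality $\Delta_{\dr_t}\log\mathrm{tr}_{\dr_t}\al_0\ge -C\,\mathrm{tr}_{\dr_t}\al_0-C$, Moser iteration with the Sobolev constant controlled by the almost-Euclidean volume ratio, and a Chern--Levine--Nirenberg-type $L^1$ bound via local potentials of oscillation $O(r_0^2)$). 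Your substitute cannot fill this gap: Anderson's $\e$-regularity \cite{An} takes a two-sided Ricci bound as a \emph{hypothesis} (that is exactly how the paper uses it after this proposition, in Theorem \ref{thm-limit space}), so invoking it to produce the Ricci bound is circular; the claimed $L^2$-smallness of the full curvature tensor does not follow from almost-Euclidean volume alone; and the closing rescaling remark points the wrong way, since a curvature bound for $r^{-2}\dr_t$ deteriorates, rather than improves, when translated back to $\dr_t$. (The compactness set-up in your first paragraph is also never used to reach a contradiction.) So the away-from-the-divisor half must be carried out, or quoted, as in \cite{Sz}; as written it is a genuine gap.
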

\begin{proof}
The only different issue from \cite{Sz} is that all points lies on the divisors $D=\sum D_r$ violate the condition on $I(B_{\dr_t}(p,r_0))$ due to the asymptotical behavior of the conic metric. Away from divisors the argument is almost the same to Proposition 8 in \cite{Sz}.
\end{proof}
Now we can show the following structure theorem of the tangent cone, similar to \cite{Ti15} which is modified from \cite{CCT}:
\begin{theorem}\label{thm-tangent cone}
Let $\Cc_x$ be a tangent cone of $M_\infty$ at $x\in\Sc,$ then we have the following:
\begin{enumerate}
\item [(C1)]Each $\Cc_x$ is regular outside a closed subcone $\Sc_x$ of complex codimension at least 1. Such
$\Sc_x$ is the singular set of $\Cc_x.$
\item [(C2)]$\Cc_x=\C^{k}\times\Sc'_x,$ in particular $\Sc_{2k+1}=\Sc_{2k}$ and we denote $o$ as the vertex of $\Cc_x.$
\item [(C3)]There is a natural \ka-Ricci flat metric $g_x$ whose \ka\ form $\dr_x$ is $\ddb\rho_x^2$ on $\Cc_x\setminus\Sc_x,$ where $\rho_x$ denotes the distance function from $o.$ Also $g_x$ is a cone metric.
\item [(C4)]For any $x\in\Sc_{2n-2}$ with $\Cc_x=\C^{n-1}\times\Cc'_x,$ then $\Cc'_x$ is a two-dimensional flat cone of angle $2\pi\overline{\be}.$ Moreover there exist $\overline{\be}_0,\overline{\be}_1\in(0,1)$ such that $\overline{\be}_0\leq\overline{\be}\leq\overline{\be}_1.$
\end{enumerate}
\end{theorem}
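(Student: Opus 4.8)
The plan is to follow the Cheeger--Colding--Tian program as carried out in \cite{Ti15} for conic \ka--Einstein metrics, while keeping track of the extra twisted term $(1-T)b_0\al_0$ in Theorem \ref{thm-limit space}. The starting point is the metric cone structure. Since $Ric(\tilde\dr_i)\ge t_i\tilde\dr_i$ with $t_i\to T>0$, we have $Ric(\tilde\dr_i)\ge 0$ for $i$ large and the total volumes are fixed, so Bishop--Gromov applies and passes to $M_\infty$: the ratio $r\mapsto r^{-2n}\,\mathrm{Vol}\,B_r(x)$ is monotone non-increasing and the density $\Theta(x):=\lim_{r\to0}r^{-2n}\,\mathrm{Vol}\,B_r(x)$ exists. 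By the volume-cone-implies-metric-cone theorem of \cite{CC}, every tangent cone $\Cc_x=\lim(M_\infty,r_j^{-1}d_\infty,x)$ (along a subsequence $r_j\to0$) is a metric cone $C(Y_x)$ with $\mathrm{diam}\,Y_x\le\pi$, and the uniform non-collapsing of $\{\tilde\dr_i\}$ supplied by \cite{Sh} forces $\dim_{\mathcal H}\Cc_x=2n$; the same holds for tangent cones of $\Cc_x$, which is what makes the stratification $\Sc_0\subset\cdots\subset\Sc_{2n-1}$ well behaved. This already gives the ``cone metric'' part of (C3).

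For (C1) and the remainder of (C3): let $\mathcal{R}_x$ be the set of points of $\Cc_x$ with Euclidean tangent cone and $\Sc_x:=\Cc_x\setminus\mathcal{R}_x$, a closed subcone. A point $y\in\mathcal{R}_x$ has, in the rescaled metrics, balls whose volume ratio is close to Euclidean, so Proposition \ref{prop-sze8} shows these balls lie away from the divisors $D_r$ and carry a uniform Ricci bound; Anderson's $\e$-regularity \cite{An} then gives uniform local $C^{1,\al}$ control and, after bootstrapping in harmonic coordinates, a smooth limit, so $\mathcal{R}_x$ is a smooth \ka\ manifold and $J$ passes to the limit. Rescaling $d_\infty$ by $r_j^{-1}$ scales the metric by $r_j^{-2}$ but leaves the Ricci tensor unchanged, hence on the rescaled metric $h_j$ one has $Ric(h_j)=r_j^2\,T\,h_j+(1-T)b_0\al_0$, and both terms have $h_j$-norm $O(r_j^2)\to0$, so $g_x$ is \ka--Ricci-flat on $\mathcal{R}_x$. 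The standard identity for \ka\ cones then yields $\dr_x=\ddb\rho_x^2$. That $\Sc_x$ has complex codimension $\ge1$ follows from Theorem \ref{thm-limit space} (the set $\Sc$ has real codimension $\ge2$ in $M_\infty$) together with the inheritance of Minkowski-codimension bounds on the singular strata under tangent cone limits \cite{CCT}.

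For (C2): a parallel vector field along a split $\R$-factor of the \ka\ cone $\Cc_x$ has parallel image under $J$, so every isometric splitting can be enlarged to a $J$-invariant, hence even-dimensional, one; taking $k$ maximal gives $\Cc_x=\C^k\times\Cc'_x$ with $\Cc'_x$ a \ka--Ricci-flat cone with no line factor. Since the maximal split is always even, ``some tangent cone splits off $\R^{2k+1}$'' is equivalent to ``some tangent cone splits off $\R^{2k+2}$'', i.e.\ $\Sc_{2k+1}=\Sc_{2k}$. Finally (C4): for $x\in\Sc_{2n-2}$ with $\Cc_x=\C^{n-1}\times\Cc'_x$, the factor $\Cc'_x$ is a two-dimensional metric cone, hence a flat cone of some angle $2\pi\overline{\be}$; it is not $\R^2$ (otherwise $\Cc_x$ would split off $\R^{2n-1}$), so $\overline{\be}<1$, while Bishop's inequality gives $\overline{\be}=\Theta(x)\le1$. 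The lower bound $\overline{\be}\ge\overline{\be}_0>0$ is precisely the uniform non-collapsing of $\{\tilde\dr_i\}$ from \cite{Sh}; the upper bound $\overline{\be}\le\overline{\be}_1<1$ follows because such an $x$ lies on the limit of some divisor $D_r$ (the only codimension-$2$ singularities when $T<1$, by the proof of Theorem \ref{thm-limit space}), near which the $\tilde\dr_i$ are modeled on conic metrics of angle $2\pi b_r$ with $b_r<1$ fixed, so that $\Theta(x)\le\max_r b_r+o(1)$.

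The main obstacle is this last point: the uniform two-sided angle bound in (C4). It requires converting the uniform non-collapsing and the uniform ``conic model'' behavior of $\tilde\dr_i$ near the moving divisors (the content of Theorem \ref{thm-shen}) into bounds on the density $\Theta$ at codimension-$2$ singular points that are independent of $t<T$, and verifying that Proposition \ref{prop-sze8} does let one run Anderson's $\e$-regularity at every regular point of every tangent cone with constants uniform in $t$. The twisted term $(1-T)b_0\al_0$ itself causes no difficulty, since it is scaled away in every tangent cone; the delicate analysis is entirely in the uniformity near the divisors.
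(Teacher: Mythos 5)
Your treatment of (C1)--(C3) follows the same Cheeger--Colding--Tian route the paper takes (the paper simply cites that these parts can be modified from \cite{CCT} as in \cite{Ti15}), and your observation that the twisted term is scaled away in tangent cones is correct. The genuine problems are in (C4). First, you misidentify the cone angles: along $D_r$ the metric $\dr_{t_i}$ has cone angle $2\pi\be_{r,i}=2\pi\bigl(1-(1-t_i)b_r\bigr)$, not $2\pi b_r$, so your claimed bound $\Theta(x)\le\max_r b_r+o(1)$ is false as stated; with the correct angles the model angle is $1-(1-T)b_r$, which tends to $1$ as $T\to1$, so nothing uniform comes for free from the local conic model, and even for $T<1$ the passage from ``the approximating metrics are asymptotically conic along $D_r$'' to a bound on the density of $M_\infty$ at $x$ is a two-scale statement that needs an argument. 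The paper supplies exactly this via the transversal slice construction and Gauss--Bonnet on $\Sigma_{z,\de}$, where the conic defects at the $m_r$ intersection points give $1-\overline{\be}\ge\sum_r m_r(1-\be_r)+o(1)$ with $\be_r=1-(1-T)b_r$ and the twisted term contributes only $(1-t_i)b_0\int r_i^2\al_0=o(1)$; your density comparison is not a substitute for this.

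Second, you dispose of the other case by asserting that every $x\in\Sc_{2n-2}$ lies on the limit of some $D_r$, ``by the proof of Theorem \ref{thm-limit space}.'' That proof only gives the inclusion in the opposite direction (limits of the divisors lie in $\Sc$); it does not show that codimension-$2$ singular points away from the divisor limits cannot occur. Handling that case is precisely where the paper's proof does real work: if there were points of $\Sc_{2n-2}$ whose two-dimensional cone factors had angles $2\pi\gamma_i\to2\pi$ (in particular, points whose slices $\Sigma_z$ miss $D$ entirely, where Gauss--Bonnet gives no bound), then the almost-Euclidean volume ratio lets one apply Proposition \ref{prop-sze8} to get bounded Ricci curvature on rescaled half-balls, and Cheeger--Colding--Tian theory then forces the singular set of the limit to have codimension at least $4$, contradicting $y\in\Sc_{2n-2}$; this contradiction is what produces the uniform $\overline{\be}_1<1$ covering all possibilities. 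Your proposal contains no argument for this case, so the uniform upper bound --- which you yourself flag as the main obstacle --- is left unproved. The lower bound via Bishop--Gromov/non-collapsing agrees with the paper.
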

\begin{proof}
As \cite{Ti15}, the proof of (C1)-(C3) can be modified from \cite{CCT} directly. For (C4), we still need a modified slice argument in \cite{CCT,Ti15}. Since $(M,\dr_i)$ converge to $(M_\infty,\dr_\infty)$ there are
$r_i\to 0$ and $x_i\in M$ such that $(M,r_i^{-2}\dr_i,x_i)$ converges to the cone $\Cc_x.$ Thus there are
$\e_i\to 0$ and map $(\Phi_i,u_i): B_{5/2}(x_i,r_i^{-2}\dr_i)\longmapsto\C^{n-1}\times\R_+$ satisfying that
$$\max\{Lip(\Phi_i),Lip(u_i)\}\leq c(n),\qquad\int\limits_{|z|<1,|z|\in\C^{n-1}}|V(z)-2\pi\overline{\be}|dz\wedge d\bar{z}\leq\e_i,$$
where $V(z)$ is the volume of $\Sigma_z=\Phi_i^{-1}(z)\cap u_i^{-1}([0,1]$ with respect to $r_i^{-2}\dr_i.$
As denoted by \cite{Ti15} we can assume that $\Phi_i$ is smooth along divisor $D.$

Now write $\e=\e_i, (\Phi,u)=(\Phi_i,u_i).$ By the estimate above, we can find a subset $B_\e$ of
$\{|z|<1\}\subset\C^{n-1}$ with large measure such that for any $z\in B_\e,\Sigma_z$ is transversal to $D=\sum D_r$ with its boundary converging to $\{z\}\times S_{\overline{\be}}^1$ as $i\to\infty,$ where $S_{\overline{\be}}^1$ denotes the unit circle in $\Cc'_x$ and $|V(z)-2\pi\overline{\be}|\leq C\e$ for some uniform constant $C.$ Furthermore, write $\Phi:=(h_1+\sqrt{-1}h_2,\cdots,h_{2n-3}+\sqrt{-1}h_{2n-2}),$ by \cite{CCT} we have the following estimates:
\begin{enumerate}
\item [(i)]$\int_{\Sigma_z}|\langle\nabla h_k,\nabla h_l\rangle-\de_{kl}|=o(1);$
\item [(ii)]$\int_{\Sigma_z}|hess h_k|=o(1);$
\item [(iii)]$\int_{\Sigma_z}|\langle\nabla h_k,\nabla u^2\rangle|=o(1);$
\item [(iv)]$\int_{\Sigma_z}|\nabla\langle\nabla h_k,\nabla u^2\rangle|=o(1).$
\end{enumerate}
Now $K_M^{-1}$ restrict to a line bundle on $\Sigma_z$ with an induced Hermitian metric
$h_z$ by $r_i^{-2}\dr_i$ whose curvature $\Omega$ is equal to
$$Ric(r_i^{-2}\dr_i)=t_i\dr_i+(1-t_i)(\sum_{r=1}^{m}2\pi b_r\iota^*_z[D_r]+b_0\al_0).$$
Now denote $\Sigma_{z,\de}:=\Sigma_{z}\setminus\cup_s B_\de(p_s)$ where $p_s$ denote points where divisors $D_r$ transversally intersect with $\Sigma_z$ and $\de$ is small enough. Suppose each $D_r$ has $m_r$ intersections (accounting multiplicities) with $\Sigma_z$ and denote $\be_{r,i}=1-(1-t_i)b_r$ as the corresponding cone angles of $\dr_i$ along $D_r.$  Set $K$ as the Gaussian curvature on $\Sigma_z,$ by the estimates of $\Phi$ above we can see that the second fundamental form $\Pi_{\Sigma_z}$ in $(M,r_i^{-2}\dr_i)$ is small, it follows that
$$\int_{\Sigma_{z,\de}}K=\int_{\Sigma_{z,\de}}Ric(r_i^{-2}\dr_i)+o(1)=(1-t_i)b_0\int_{\Sigma_{z,\de}}r_i^2\al_0+o(1).$$
By Gauss-Bonnet formula, it follows that
$$2\pi(\chi(\Sigma_{z}-\sum_r m_r))=2\pi\chi(\Sigma_{z,\de})=\int_{\Sigma_{z,\de}}K+\int_{\partial\Sigma_{z,\de}}H-2\pi\sum_r m_r\be_r,i$$ where $H$ denotes the geodesic curvature on the boundary. Let $i\to\infty$ and $\de\to 0,$ it follows that
$$1-\overline{\be}\geq\sum_r m_r(1-\be_r)+o(1)$$ where $\be_r:=1-(1-T)b_r.$ Thus if there are at least one $m_r>0$ we have the upper bound $\overline{\be}\leq\overline{\be}_1$ for some $\overline{\be}_1<1.$

On the other hand, if all $m_r=0,$ i.e., $\Sigma_z$ has no intersection with all divisors, we can adapt the argument in
Proposition 11 of \cite{Sz}. Suppose there are sequence of points $\{y_i\}\subset\Sc_{2n-2}=\Sc\subset M_\infty$ such that they have tangent cones $\Cc_{y_i}$ with cone angles $2\pi\gamma_i$ converging to $2\pi.$
Note that as $\Sc$ is closed we may assume that $y_i\to y\in\Sc.$ then there are sequence of points $x_i\in(M,\dr_i)$ and $r_i\to 0$ such that
$$Vol B_{r_i^{-2}\dr_i}(x_i,1)=\gamma_i Vol_{\R^{2n}}B(0,1)+o(1)$$ and the limit of a subsequence of those scaled balls around $x_i$ is the tangent cone $\Cc_y$ where $y\in\Sc_{2n-2}.$
Thus as $\gamma_i\to 1,$ by Proposition \ref{prop-sze8}, $r_i^{-2}\dr_i$ has bounded Ricci curvature on the half ball $B_{1/2}(x_i,r_i^{-2}\dr_i).$ However by Cheeger-Colding-Tian theory \cite{CCT} it follows that the singular set $\Sc$ of the limit of this sequence $(B_{1/2}(x_i,r_i^{-2}\dr_i),r_i^{-2}\dr_i)$ has at least codimension 4, which leads to a contradiction to $y\in\Sc_{2n-2}.$ Thus $\gamma_i$ cannot be arbitrarily close to 1 and we establish a uniform upper bound $\overline{\be}\leq\overline{\be}_1$ for all possibilities. The lower bound $\overline{\be}_0$ can be derived from standard Bishop-Gromov volume comparison theorem.
\end{proof}

\section{Smooth Convergence}

As \cite{Ti15}, we need to show that $\dr_i$ in the last section smoothly converges to $\dr_\infty$ outside a closed subset of codimension at least 2, which is crucial for the partial $C^0$-estimate. When $T<1$ actually we have shown that the limit of divisors lie in the singular set $\Sc$ as $\dr_i$ smoothly converges to $\dr_\infty$ outside $\Sc.$ However when $T=1$ the singular set $\Sc$ has codimension at least 4 but each $\dr_i$ is smooth outside the divisors which have codimension 2, thus the conclusion is not clear. In this section we will follow the strategy in \cite{Ti15} to show the conclusion of smooth convergence.

For our continuity path \eqref{eq:path}, we need to establish a new construction of Hermitian metrics on $K_M^{-1}.$ According to the assumptions of \eqref{eq:path}, first as $\al_0\in c_1(M),$ by Yau's solution
to Calabi's Conjecture \cite{Yau}, there exists a smooth \ka\ metric $\dr_{\al_0}\in c_1(M)$ such that
$Ric(\dr_{\al_0})=\al_0.$ Now set $\tilde{H}_{\al_0}:=\dr_{\al_0}^{n}$ as a Hermitian metric on $K_M^{-1}$
then it follows that $R(\tilde{H}_{\al_0})=Ric(\dr_{\al_0})=\al_0.$ Next recall that $\dr_{i}=\dr_{t_i}$ in
\eqref{eq:path} is a conic metric with cone angles $2\pi\be_{r,i}$ along $D_r$ where $\be_{r,i}=1-(1-t_i)b_r.$
For simplicity we skip the index $i,$ then the Hermitian metric $\tilde{H}_{\dr}:=\dr^{n}$ defined on $K_{M}^{-1}$ has the order $\prod\limits_r |S_r|^{-2(1-\be_r)}$ along $D=\sum\limits_r D_r,$ where $S_r$ is the defining section of $D_r.$ By the conditions that $b_r=\frac{p_r}{q}$ and $\sum\limits_r b_r[D_r]\in(1-b_0)c_1(M),$ we construct the following holomorphic section
$$S:=S_1^{p_1}\otimes S_2^{p_2}\otimes\cdots\otimes S_r^{p_r}\in H^0(M,q(1-b_0)K_M^{-1}).$$
Then we can construct a Hermitian metric
\begin{equation}\label{eq:hermitian}
H_\dr:=\tilde{H}_{\al_0}(S,S)^{-c_1}\tilde{H}_{\dr}(S,S)^{c_2}\tilde{H}_{\dr}
\end{equation}
on $K_M^{-1}$ with the constants $c_1,c_2$ such that $H_\dr$ is nonsingular and its curvature $R(H_\dr)=\dr.$
For those targets, as $S\in H^0(M,q(1-b_0)K_M^{-1}),$ we can show that
$$\tilde{H}_{\al_0}(S,S)\sim\prod_r|S_r|^{2p_r},\quad\tilde{H}_{\dr}(S,S)\sim\displaystyle\frac{\prod_r|S_r|^{2p_r}}{\prod_r |S_r|^{2(1-\be_r)q(1-b_0)}}.$$
Thus to make $H_\dr$ nonsingular for each $r$ it suffices to satisfy
$$-c_1 p_r+c_2(p_r-q(1-b_0)(1-\be_r))-(1-\be_r)=0.$$
On the other hand, for the second requirement, we need
\begin{align*}
R(H_\dr)&=-c_1 q(1-b_0)R(\tilde{H}_{\al_0})+(1+c_2 q(1-b_0))R(\tilde{H}_{\dr})-2\pi(c_2-c_1)\sum_r p_r[D_r]\\
&=(1+c_2 q(1-b_0))Ric(\dr)-2\pi(c_2-c_1)\sum_r p_r[D_r]-c_1 q(1-b_0)\al_0=\dr.
\end{align*}
To satisfy the equations above, it suffices to have
\begin{equation}\label{cond}
\left\{
   \begin{array}{ll}
    \displaystyle q(c_2-c_1)&=(1-t)(1+c_2 q(1-b_0))\\
    \displaystyle c_1 q(1-b_0)&=b_0(1-t)(1+c_2 q(1-b_0)),
    \end{array}
 \right.
\end{equation}
Thus we have $$c_1=b_0 c_2=\frac{b_0(1-t)}{tq(1-b_0)}, c_2=\frac{1-t}{tq(1-b_0)},$$ which give us the required $H_\dr,$ which is called the associated Hermitian metric of $\dr.$ Moreover for any $\sigma\in H^0(M,K_M^{-l})$ it follows that $H_\dr(\sigma,\sigma)$ is bounded.

The remaining steps are almost the same to \cite{Ti15} so we only sketch the main steps in the
following. First, by Theorem \ref{thm-shen}, similar to the argument in the proof of Lemma 3.3 in
\cite{Ti15}, we have uniform Sobolev Inequalities with respect to all $\dr_i.$ As Lemma 4.1 in
\cite{Ti15} we also have equations for $\sigma\in H^0(M,K_M^{-l}):$
\begin{align*}
\Delta_i||\sigma||_i^2&=||\nabla\sigma||_i^2-nl||\sigma||_i^2,\\
\Delta_i||\nabla\sigma||_i^2&=||\nabla^2\sigma||_i^2-((n+2)l-t_i)||\nabla\sigma||_i^2,
\end{align*}
where $||\cdot||_i$ denotes the Hermitian metric on $K_M^{-l}$ induced by $H_i=H_{\dr_i},$
$\nabla$ denotes the covariant derivative of $H_i,$ and $\Delta_i$ denotes the Laplacian of $\dr_i.$
By the uniform Sobolev Inequalities and the equations above we have the following uniform estimates which follows from Moser's iteration:
\begin{lemma}\label{lem-T4.2}\emph{(\cite{Ti15} Corollary 4.2)}
If $\sigma_i$ is a sequence in $H^0(M,K_M^{-l})$ satisfying that $\int_M||\sigma||_i^2\dr_i^{n}=1,$ then
\begin{equation}\label{eq:section uniform}
\sup_{M}(||\sigma_i||_i+l^{-1/2}||\nabla\sigma_i||_i)\leq Cl^{n/2}.
\end{equation}
\end{lemma}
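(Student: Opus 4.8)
The plan is to follow the standard Moser iteration scheme exactly as in \cite{Ti15}, adapted to our setting where $\dr_i$ is only a conic metric; the crucial point is that all the ingredients needed — uniform Sobolev inequalities, the Bochner-type identities, and integrability of the relevant quantities — have already been established above. First I would recall that by the uniform Sobolev inequalities with respect to all $\dr_i$ (obtained as in Lemma 3.3 of \cite{Ti15} via Theorem \ref{thm-shen}), there is a constant $C_S$ independent of $i$ such that $\left(\int_M |f|^{\frac{2n}{n-1}}\dr_i^n\right)^{\frac{n-1}{n}} \leq C_S\left(\int_M |\nabla f|^2\dr_i^n + \int_M |f|^2\dr_i^n\right)$ for all $f\in C^\infty(M)$, and since the conic singularities have real codimension at least $2$ this passes to the relevant Sobolev space by a standard cutoff argument near $D$.

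Next I would run Moser iteration on $u_i := \|\sigma_i\|_i^2$. From the first Bochner identity $\Delta_i\|\sigma\|_i^2 = \|\nabla\sigma\|_i^2 - nl\|\sigma\|_i^2 \geq -nl\,u_i$, so $u_i$ is a nonnegative subsolution of $\Delta_i u_i \geq -nl\,u_i$. Multiplying by $u_i^{p-1}$ (with an approximation/cutoff near $D$ to justify integration by parts, which is legitimate because $\dr_i$ is genuinely smooth off $D$ and the boundary terms over small tubes around $D$ vanish in the limit) and applying the Sobolev inequality yields the standard reverse-Hölder chain $\|u_i\|_{L^{\chi p}} \leq (Cl p)^{1/p}\|u_i\|_{L^p}$ with $\chi = \frac{n}{n-1}$; iterating over $p = \chi^k$ and summing the geometric-type series in the exponents produces $\sup_M u_i \leq C l^n \int_M u_i\,\dr_i^n = Cl^n$, using the normalization $\int_M u_i\,\dr_i^n = 1$. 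Taking square roots gives $\sup_M\|\sigma_i\|_i \leq Cl^{n/2}$, the first half of \eqref{eq:section uniform}. For the gradient term I would set $v_i := \|\nabla\sigma_i\|_i^2$ and use the second Bochner identity $\Delta_i v_i = \|\nabla^2\sigma\|_i^2 - ((n+2)l - t_i)v_i \geq -((n+2)l - t_i)v_i \geq -(n+2)l\,v_i$, so $v_i$ is again a nonnegative subsolution with coefficient $O(l)$; the same iteration gives $\sup_M v_i \leq Cl^n\int_M v_i\,\dr_i^n$. Finally, $\int_M v_i\,\dr_i^n = \int_M\|\nabla\sigma_i\|_i^2\,\dr_i^n = nl\int_M\|\sigma_i\|_i^2\,\dr_i^n = nl$ (integrating the first Bochner identity and using that $\int_M\Delta_i\|\sigma_i\|_i^2\,\dr_i^n = 0$, valid because the metric is complete away from $D$ and the conic ends contribute nothing), so $\sup_M v_i \leq Cl^{n+1}$, hence $l^{-1/2}\sup_M\|\nabla\sigma_i\|_i \leq Cl^{n/2}$, completing the estimate.

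The main obstacle is not the iteration itself — that is entirely routine once the Sobolev inequality is in hand — but rather the justification of all the integrations by parts and the vanishing of boundary contributions along $D$, where $\dr_i$ degenerates to the conic model. I would handle this by working on $M\setminus N_\eta(D)$ for a tubular neighborhood of radius $\eta$, carrying out the integral estimates there, and then letting $\eta\to 0$; the key quantitative input is that the volume of $N_\eta(D)$ in $\dr_i$ is $O(\eta^2)$ uniformly in $i$ (since the cone angles $\be_{r,i} = 1-(1-t_i)b_r$ stay bounded away from $0$ and the $\dr_i$-areas of the $D_r$ are uniformly controlled by the fixed cohomology class $2\pi c_1(M)$), together with the fact that $\|\sigma_i\|_i$ and $\|\nabla\sigma_i\|_i$ remain bounded on each fixed tube by the nonsingularity of $H_{\dr_i}$ established via \eqref{eq:hermitian} — so all tube boundary integrals and the spurious interior terms are $O(\eta)$ and drop out. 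Alternatively one can simply quote that this cutoff argument was carried out in \cite{Ti15} and that the construction of $H_{\dr_i}$ above makes the present conic metrics formally identical to those treated there, in which case the lemma follows verbatim.
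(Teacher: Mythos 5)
Your proposal is correct and follows essentially the same route as the paper, which simply invokes the two Weitzenb\"ock-type identities for $\|\sigma\|_i^2$ and $\|\nabla\sigma\|_i^2$, the uniform Sobolev inequality from Theorem \ref{thm-shen}, and Moser iteration as in Corollary 4.2 of \cite{Ti15}; your iteration exponents, the identity $\int_M\|\nabla\sigma_i\|_i^2\,\dr_i^n=nl$, and the resulting powers $l^{n}$ and $l^{n+1}$ all check out. The only cosmetic slip is the phrase ``complete away from $D$'' (the conic metric is incomplete near $D$); the genuine justification is the cutoff/zero-capacity argument over tubes $N_\eta(D)$ that you describe in your final paragraph, which is exactly the standard treatment.
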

It follows from Lemma \ref{lem-T4.2} that $||\sigma_i||_i$ are uniformly continuous. Thus by taking
a subsequence if necessary we may assume $||\sigma_i||_i$ converge
to a Lipschtz function $F_\infty$ as $i\to\infty$ and $\int_{M_\infty}F_\infty^2\dr_\infty^n=1.$ We can see
that $F_\infty$ is not identical to 0. We only need to show that $\dr_i$ converge to $\dr_\infty$ away
from $F_\infty^{-1}(0)\cup\Sc$ and $F_\infty$ is equal to the square norm of a holomorphic section on $M_\infty.$

If $F_\infty(x)\neq 0$ for some $x\in M_\infty,$ we can argue that for $x_i\to x$ where $x_i\in(M,\dr_i)$ the small balls
$B_r(x_i,\dr_i)$ are away from divisors. In this case the volumes of those small balls are close to Euclidean balls.
Then similar to the proof of Theorem \ref{thm-limit space}, by \cite{An} we can
show the uniform curvature estimate and consequently the smooth convergence follows away from $F_\infty^{-1}(0)\cup\Sc.$ Furthermore we can show that $M_\infty\setminus F_\infty^{-1}(0)$
is dense. In fact if there exists an open set $U\subset F_\infty^{-1}(0)$ then as $||\sigma_i||_i$
is bounded from above, it follows that
$$\lim\limits_{i\to\infty}\int_M\log\left(\frac{1}{i}+||\sigma_i||_i^2\right)\dr_i^n=-\infty.$$
However by direct computations $$\ddb\log\left(\frac{1}{i}+||\sigma_i||_i^2\right)\geq-l\dr_i.$$
thus by standard Moser iteration and the uniform Sobolev Inequality in Lemma 3.3 of \cite{Ti15},
it follows that
$$\sup_M\log\left(\frac{1}{i}+||\sigma_i||_i^2\right)\leq C\left(1+
\int_M\log\left(\frac{1}{i}+||\sigma_i||_i^2\right)\dr_i^n\right),$$
which tends to $-\infty$ as $i\to\infty.$ However this contradicts the fact that $\int_M||\sigma||_i^2\dr_i^{n}=1,$ thus $M_\infty\setminus F_\infty^{-1}(0)$ is dense.

As $\dr_i\to\dr_\infty$ smoothly away from $F_\infty^{-1}(0)\cup\Sc,$ $\sigma_i$ converges
to a holomorphic section $\sigma_\infty$ on $M_\infty$ with $F_\infty=||\sigma_\infty||_\infty.$ As $\sigma_\infty$ is uniformly bounded and holomorphic on dense set, it could be extended to a holomorphic section on $M_\infty\setminus\Sc.$ As $||\sigma_i||_i=0$ on $D$ the limit of $D$ must lies in $D_\infty=\{F_\infty=0\}.$ On the other hand, if $D_\infty$ does not coincide with the limit of $D,$ there exist $x\in D_\infty$ and $r>0$ such that $B_{2r}(x,d_\infty)\cap D_\infty$
is disjoint from the limit of $D.$ Then for sufficiently large $i,$ $B_{r}(x,\dr_i)$ is disjoint from $D$ thus lies in the smooth part of $(M,\dr_i).$ By Proposition \ref{prop-sze8} the Ricci curvature on $B_{r/2}(x,\dr_i)$ is uniformly bounded and it follows from Cheeger-Colding-Tian theory \cite{CCT} that $\Sc\cap B_{r}(x,d_\infty)$ is of complex codimension at least 2 and near a generic point $y\in B_{r}(x,d_\infty)$ $\sigma_\infty$ is holomorphic and defines $D_\infty.$ By the smooth convergence of $(M,\dr_i)$ to $(M_\infty,d_\infty)$ near $y$ it follows that $\sigma_i(y)=0$ which contradicts with the assumption that $y$ is not in the limit of $D.$ Thus $D_\infty$ must coincide with the limit of $D.$

If $T=1$ as $\Sc$ is of complex codimension at least 2 then $D_\infty=\{\sigma_\infty=0\},$ which
is a divisor of $K_{M_\infty}^{-l}.$ To summarize, we have the following theorem as Theorem 4.3 in \cite{Ti15}:
\begin{theorem}\label{thm-smooth}
$(M,\dr_i)$ converge to $(M_\infty,\dr_\infty)$ in the $C^\infty$-topology outside a closed subset
$\overline{\Sc}\cup D_\infty$ where $\overline{S}$ is of codimension at least $4,$ and $D$ converges to
$D_\infty$ in the Gromov-Hausdorff topology. If $T>1,$ $\Sc=\overline{\Sc}\cup D_\infty.$ If $T=1,$
$\Sc=\overline{\Sc}$ and $D_\infty$ is a divisor of $K_{M_\infty}^{-l}.$
\end{theorem}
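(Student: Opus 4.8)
This statement is a bookkeeping consequence of the analysis above, so the plan is to organise the facts already in hand into the two regimes $T<1$ and $T=1$ and, in both, to identify the limit $D_\infty$ of the divisors. For $T<1$ Theorem~\ref{thm-limit space} already provides $C^\infty$-convergence of $\dr_i$ to $\dr_\infty$ on $M_\infty\setminus\Sc$ together with $\lim D\subset\Sc$, and the stratification of Theorem~\ref{thm-tangent cone} (in particular $\Sc_{2k+1}=\Sc_{2k}$, so that the part of $\Sc$ not accounted for by $\lim D$ sits in $\Sc_{2n-4}$) then lets us put $\overline{\Sc}=\Sc_{2n-4}$ and $D_\infty=\lim D$. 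Hence the genuine work is the case $T=1$, where $\Sc$ has real codimension at least $4$ but each $\dr_i$ is only conic of angle $2\pi\be_{r,i}\to 2\pi$ along $D$, so the convergence could a priori fail on the codimension-two set $\lim D$; the task is to locate that set and upgrade the Cheeger--Colding--Tian regularity of Theorem~\ref{thm-tangent cone} to smooth convergence off it.

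The first step is to introduce a distinguished sequence of sections. Taking $\sigma_i$ to be a tensor power of $S=S_1^{p_1}\otimes\cdots\otimes S_m^{p_m}$, viewed in $H^0(M,lK_M^{-1})$ for the appropriate $l$ and normalised by $\int_M||\sigma_i||_i^2\dr_i^n=1$ with respect to the associated Hermitian metric $H_i=H_{\dr_i}$ of \eqref{eq:hermitian} — which is legitimate since $H_i$ is nonsingular, has curvature $\dr_i$, and $H_i(\sigma,\sigma)$ is bounded — I would use the uniform Sobolev inequalities supplied by Theorem~\ref{thm-shen}, together with the Bochner identities for $||\sigma||_i^2$ and $||\nabla\sigma||_i^2$ and Moser iteration, to obtain Lemma~\ref{lem-T4.2} and hence a uniform (in $i$, for fixed $l$) Lipschitz bound on $||\sigma_i||_i$. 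Passing to a subsequence gives $||\sigma_i||_i\to F_\infty$ with $F_\infty$ Lipschitz, $\int_{M_\infty}F_\infty^2\dr_\infty^n=1$, and $F_\infty\equiv 0$ on $\lim D$ because $||\sigma_i||_i$ vanishes on $D$. The inequality $\ddb\log(i^{-1}+||\sigma_i||_i^2)\geq-l\dr_i$ together with Moser iteration and the uniform Sobolev bound rules out $F_\infty^{-1}(0)$ containing any open set, so $M_\infty\setminus F_\infty^{-1}(0)$ is dense; set $D_\infty:=F_\infty^{-1}(0)$.

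The second step is the smooth convergence off $\overline{\Sc}\cup D_\infty$, where $\overline{\Sc}$ denotes the part of $\Sc$ of codimension at least $4$ (all of $\Sc$ when $T=1$, by Theorem~\ref{thm-limit space}). Fix $x\notin\overline{\Sc}\cup D_\infty$ and $x_i\to x$ in $(M,\dr_i)$. Since $||\sigma_i||_i$ vanishes on $D$ and is uniformly Lipschitz while $||\sigma_i||_i(x_i)$ stays bounded away from $0$, we get $d_{\dr_i}(x_i,D)\geq c(x)>0$, so small balls $B_r(x_i,\dr_i)$ lie in the smooth locus of $(M,\dr_i)$; because $x\notin\Sc$ its tangent cone is $\R^{2n}$, hence these balls have almost Euclidean volume for $i$ large and $r$ small, and Proposition~\ref{prop-sze8} gives $|Ric(\dr_i)|\leq 5$ there. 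Anderson's $\e$-regularity theorem \cite{An} then yields uniform curvature bounds on slightly smaller balls, and bootstrapping the (twisted) \ka-Einstein equation promotes this to uniform $C^\infty$ bounds; thus $\dr_i\to\dr_\infty$ in $C^\infty$ near $x$ and $\sigma_i\to\sigma_\infty$, a local holomorphic section of $K_{M_\infty}^{-l}$ with $F_\infty=||\sigma_\infty||_\infty$ there. Being bounded and holomorphic on the dense open set $M_\infty\setminus(\overline{\Sc}\cup D_\infty)$, $\sigma_\infty$ extends holomorphically across $\overline{\Sc}$ (the latter having codimension at least $4$ when $T=1$), so in that case $D_\infty=\{\sigma_\infty=0\}$ is a divisor of $K_{M_\infty}^{-l}$. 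Finally, to upgrade $\lim D\subset D_\infty$ to an equality I would argue by contradiction as in the discussion above: if some $x\in D_\infty$ admitted $r>0$ with $B_{2r}(x,d_\infty)\cap D_\infty$ disjoint from $\lim D$, then $B_r(x,\dr_i)\cap D=\emptyset$ for large $i$, Proposition~\ref{prop-sze8} bounds $Ric(\dr_i)$ on $B_{r/2}(x,\dr_i)$, Cheeger--Colding--Tian forces $\Sc$ near $x$ to have complex codimension at least $2$, and at a generic nearby point $y$ the smooth convergence forces $\sigma_i(y)=0$, i.e. $y\in\lim D$, a contradiction. Combining these with Theorem~\ref{thm-limit space} yields $\Sc=\overline{\Sc}\cup D_\infty$ when $T<1$, and $\Sc=\overline{\Sc}$ with $D_\infty$ a divisor of $K_{M_\infty}^{-l}$ when $T=1$.

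The step I expect to be the main obstacle is making the two inputs for the $T=1$ case mesh: the volume-to-bounded-Ricci estimate of Proposition~\ref{prop-sze8} degenerates precisely on $D$, so smooth convergence up to a punctured neighbourhood of $\lim D$ relies entirely on the distinguished section $\sigma_i$ to certify, via $F_\infty$, both where $\lim D$ is and that its complement has almost Euclidean volume; one must also check that the leftover singular set $\overline{\Sc}$ of codimension at least $4$ genuinely does not obstruct the holomorphic extension of $\sigma_\infty$ (so that $D_\infty$ really is a divisor), and that the conic degeneration of the $\dr_i$ along $D$ disappears in the limit away from this codimension-two set rather than leaving a residual singularity.
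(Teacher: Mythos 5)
Your proposal is correct and follows essentially the same route as the paper: the associated Hermitian metrics $H_i$ from \eqref{eq:hermitian}, the uniform Sobolev/Moser estimates of Lemma \ref{lem-T4.2} giving a Lipschitz limit $F_\infty$, the density of $M_\infty\setminus F_\infty^{-1}(0)$ via the $\log(i^{-1}+||\sigma_i||_i^2)$ argument, smooth convergence off $F_\infty^{-1}(0)\cup\Sc$ by Proposition \ref{prop-sze8} plus Anderson's $\e$-regularity, and the same contradiction argument identifying $D_\infty$ with the limit of $D$ and yielding the $T=1$ divisor statement. The only differences are cosmetic: you make explicit some points the paper leaves implicit (choosing $\sigma_i$ as powers of $S$, the quantitative lower bound on $d_{\dr_i}(x_i,D)$ from the Lipschitz estimate, and the almost-Euclidean volume coming from $x\notin\Sc$).
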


\section{Partial $C^0$-Estimate}

In this section we will follow \cite{Ti15} to prove Theorem \ref{thm-main}, i.e., establish the partial
$C^0$-estimate. In fact by last two chapters, it suffices to show the partial $C^0$-estimates
\eqref{eq:bergman} with respect to the sequence $\dr_i=\dr_{t_i}$ for $t_i\to T.$

By Theorem \ref{thm-smooth}
we have shown that $\dr_i$ smoothly converge to $\dr_\infty$ outside $\Sc$ when $T<1$ or outside
$\Sc\cup D_\infty$ when $T=1.$ Meanwhile for any section $\sigma_i\in H^0(M,K_M^{-l})$ which has the same support with $D$ and satisfies $\int_M H_i(\sigma_i,\sigma_i)\dr_i^n=1$ where $H_i=H_{\dr_i}$ was defined in the last chapter, then $\sigma_i$ converges to $\sigma_\infty\in H^0(M_\infty,K_{M_\infty}^{-l})$ as $\dr_i$
converges smoothly to $\dr_\infty.$ Moreover, $\sigma_\infty$ also has the same support with the divisor $D_\infty$ which is the limit of $D.$

In particular, choosing $\sigma_i$ as $S=S_1^{p_1}\otimes S_2^{p_2}\otimes\cdots\otimes S_r^{p_r}\in H^0(M,q(1-b_0)K_M^{-1})$ normalized by $H_i$ thus the limit $\sigma_\infty\in H^0(M_\infty,q(1-b_0)K_{M_\infty}^{-1}).$ Similar to the last section, we can define a Hermitian metric $H_\infty$ on $K_{M_\infty}^{-l}$ over $M_\infty\setminus\Sc$ by $$H_\infty=\tilde{H}_{\al_0}(\sigma_\infty,\sigma_\infty)^{-c_1}\tilde{H}_{\infty}
(\sigma_\infty,\sigma_\infty)^{c_2}\tilde{H}_{\infty},$$
where $$c_1=b_0 c_2=\frac{b_0(1-T)}{Tq(1-b_0)}, c_2=\frac{1-T}{Tq(1-b_0)},$$ and
$\tilde{H}_{\al_0},\tilde{H}_{\infty}$ are defined by $\dr_{\al_0}^{n},\dr_\infty^n$ on $K_{M_\infty}^{-1}.$
Note that $\dr_{\al_0}$ can be defined on $M_\infty\setminus\Sc$ by the smooth convergence.

Similar to Lemma 5.2 and 5.3 in \cite{Ti15}, we can easily derive the following two lemmas:
\begin{lemma}\label{lem-T5.2}
The Hermitian metrics $H_i$ converge to $H_\infty$ on $M_\infty\setminus\Sc$ in the $C^\infty$-topology.
Moreover we have
$$H_\infty(\sigma_\infty,\sigma_\infty)<\infty\;\emph{and}\;
\int_{M_\infty}H_\infty(\sigma_\infty,\sigma_\infty)\dr_\infty^n=1.$$
\end{lemma}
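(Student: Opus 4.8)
The plan is to follow the proof of Lemma~5.2 in \cite{Ti15}, adapting it to the twisted Hermitian metric \eqref{eq:hermitian}; throughout, $\sigma_i=S$ and $c_1(t_i),c_2(t_i)$ are the exponents introduced in Section~3. I would first establish the $C^\infty$-convergence $H_i\to H_\infty$ on $M_\infty\setminus\Sc$ by inspecting the explicit formula $H_i=\tilde H_{\al_0}(\sigma_i,\sigma_i)^{-c_1(t_i)}\tilde H_i(\sigma_i,\sigma_i)^{c_2(t_i)}\tilde H_i$ with $\tilde H_i=\dr_i^n$. By Theorem~\ref{thm-smooth}, $\dr_i\to\dr_\infty$ in $C^\infty_{loc}$ on $M_\infty\setminus(\Sc\cup D_\infty)$, so $\tilde H_i\to\tilde H_\infty=\dr_\infty^n$ there; the fixed metric $\tilde H_{\al_0}=\dr_{\al_0}^n$ is carried to $M_\infty\setminus\Sc$ by this convergence; $\sigma_i\to\sigma_\infty$; and $c_j(t_i)\to c_j(T)$. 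On compact subsets of $M_\infty\setminus(\Sc\cup D_\infty)$ every factor is bounded above and away from $0$, so the product converges smoothly. Near a point of $D_\infty\setminus\Sc$ --- which occurs only when $T=1$, since for $T<1$ the limit of $D$ lies in $\Sc$ by Theorem~\ref{thm-limit space} --- one has $c_1(t_i),c_2(t_i)\to0$, and the $\log|S_r|$-singular parts of $-c_1(t_i)\log\tilde H_{\al_0}(\sigma_i,\sigma_i)$, $c_2(t_i)\log\tilde H_i(\sigma_i,\sigma_i)$ and $\log\tilde H_i$ cancel exactly by the nonsingularity relations of Section~3, leaving a quantity that converges smoothly; since the cone angles $2\pi\be_{r,i}\to2\pi$ and, as in Theorem~\ref{thm-limit space}, the Ricci curvature of $\dr_i$ stays bounded near $D_\infty$ when $T=1$ (so Anderson's $\e$-regularity \cite{An} gives smooth convergence there too), one gets $H_i\to H_\infty=\tilde H_\infty$ smoothly across $D_\infty\setminus\Sc$ as well.

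Next I would prove $H_\infty(\sigma_\infty,\sigma_\infty)<\infty$ and $\int_{M_\infty}H_\infty(\sigma_\infty,\sigma_\infty)\dr_\infty^n=1$. Finiteness is the nonsingularity of $H_\infty$: the exponents $c_1,c_2$ at $t=T$ satisfy the same algebraic relations of Section~3 that cancel the vanishing order of $S$ along $D_\infty$, so $H_\infty$ is a genuine metric on $K_{M_\infty}^{-1}$ over $M_\infty\setminus\Sc$ and $H_\infty(\sigma_\infty,\sigma_\infty)$ is bounded. For the integral I would start from $\int_M H_i(\sigma_i,\sigma_i)\dr_i^n=1$, fix $\e>0$, and let $U_\e$ be the $\e$-tubular neighborhood of $\Sc\cup D_\infty$ (of $\Sc$ alone when $T<1$). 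On $M\setminus U_\e$ the $C^\infty$-convergences above give
$$\int_{M\setminus U_\e}H_i(\sigma_i,\sigma_i)\dr_i^n\;\longrightarrow\;\int_{M_\infty\setminus U_\e}H_\infty(\sigma_\infty,\sigma_\infty)\dr_\infty^n .$$
On $U_\e$ I would combine the uniform bound $\sup_M H_i(\sigma_i,\sigma_i)\le C$ from Lemma~\ref{lem-T4.2}, applied to the fixed section $S$, with a uniform volume estimate $\mathrm{Vol}_{\dr_i}(U_\e)\le\psi(\e)$, $\psi(\e)\to0$ as $\e\to0$ independently of $i$: around $\Sc$ this follows from $\Sc$ having Hausdorff codimension $\ge2$ together with Bishop--Gromov comparison (as in \cite{CCT,Ti15}), and around $D_\infty$ from the conic volume estimate for $\dr_i$, which is $O(\e^2)$ uniformly in $i$. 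Hence $\int_{U_\e}H_i(\sigma_i,\sigma_i)\dr_i^n\le C\psi(\e)$; letting $i\to\infty$ and then $\e\to0$ gives the claim.

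I expect the main obstacle to be the uniform control of $\int_{U_\e}H_i(\sigma_i,\sigma_i)\dr_i^n$ near $D_\infty$ when $T=1$, where $\dr_i$ is conic along $D$ with cone angle opening to $2\pi$ and smooth convergence across $D_\infty$ is not literally supplied by Theorem~\ref{thm-smooth}. As in the proof of Theorem~\ref{thm-limit space}, this is resolved by observing that the Ricci curvature of $\dr_i$ stays bounded near $D_\infty$, so Anderson's $\e$-regularity upgrades it to a uniform curvature bound and hence smooth convergence; combined with the uniform sup-bound of Lemma~\ref{lem-T4.2} and the elementary conic volume estimate this gives the required smallness, and the remaining passages to the limit are the routine dominated-convergence argument.
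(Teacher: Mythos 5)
Your overall route is the same as the paper's (which simply adapts Lemma 5.2 of \cite{Ti15}): smooth convergence of the explicit product $H_i=\tilde H_{\al_0}(\sigma_i,\sigma_i)^{-c_1(t_i)}\tilde H_i(\sigma_i,\sigma_i)^{c_2(t_i)}\tilde H_i$ away from $\Sc\cup D_\infty$ using Theorem \ref{thm-smooth}, boundedness of $H_\infty(\sigma_\infty,\sigma_\infty)$ from the exponent relations that cancel the vanishing order of $S$, and the normalization identity by splitting off a tube around $\Sc\cup D_\infty$, controlling it with the sup bound of Lemma \ref{lem-T4.2} and a uniformly small volume estimate. That part is fine and is exactly what the paper intends.

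The genuine gap is your treatment of $D_\infty\setminus\Sc$ when $T=1$. You assert that ``the Ricci curvature of $\dr_i$ stays bounded near $D_\infty$'' and invoke Anderson's $\e$-regularity to get $C^\infty$ convergence of $H_i$ across $D_\infty$. This step fails: for every $i$ the metric $\dr_i$ is genuinely conic along $D$ with angles $2\pi\be_{r,i}<2\pi$, so it is not smooth across $D$ and its curvature is unbounded there; Anderson's theorem requires smooth metrics with Ricci bounds on whole balls, and Proposition \ref{prop-sze8} (the tool used in Theorem \ref{thm-limit space}) explicitly excludes balls meeting the divisor and is only formulated for $T<1$ --- the $T=1$ case of Theorem \ref{thm-limit space} is handled via the Tian--Wang almost K\"ahler--Einstein structure of the smoothings $\tilde\dr_i$, not via a Ricci bound on $\dr_i$ near $D$. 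Moreover, a two-sided Ricci bound for $\dr_i$ near $D$ would need a uniform comparison $\al_0\leq C\dr_i$, which is not available, and in any case each $H_i$ is only bounded (H\"older) across $D$, not $C^\infty$, since the conic expansion of $\dr_i^n$ is cancelled by $(S,S)$-factors only to leading order; so $C^\infty$ convergence across $D_\infty$ cannot hold literally. The convergence that is actually available, and the only one the paper later uses, is $C^\infty$ convergence on $M_\infty\setminus(\Sc\cup D_\infty)$ (for $T<1$ this equals $M_\infty\setminus\Sc$ since $D_\infty\subset\Sc$), together with boundedness of $H_\infty(\sigma_\infty,\sigma_\infty)$ across $D_\infty$. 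If you delete the Anderson step and state the convergence in that form, the remainder of your argument --- in particular the integral identity, which only uses convergence off $\Sc\cup D_\infty$, the uniform sup bound, and the small-tube volume estimate --- goes through as in \cite{Ti15}.
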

\begin{lemma}\label{lem-T5.3}
For any $l>0,$ if $\{\tau_i\}$ is any sequence of $H^0(M,lK_M^{-1})$ satisfying
$$\int_M H_i(\tau_i,\tau_i)\dr_i^{n}=1,$$ then a subsequence $\tau_i$ converges to a section
$\tau_\infty\in H^0(M_\infty,lK_{M_\infty}^{-1}).$
\end{lemma}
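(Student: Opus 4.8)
The plan is to follow the proof of Lemma 5.3 in \cite{Ti15}, using the uniform estimates of Lemma \ref{lem-T4.2} and the smooth convergence $H_i\to H_\infty$ of Lemma \ref{lem-T5.2}. Since $lK_M^{-1}=K_M^{-l}$, the Laplacian identities displayed just before Lemma \ref{lem-T4.2}, and hence Lemma \ref{lem-T4.2} itself, apply verbatim to $\{\tau_i\}$, so that
\[
\sup_M\bigl(\|\tau_i\|_i+l^{-1/2}\|\nabla\tau_i\|_i\bigr)\le Cl^{n/2}
\]
with $C$ independent of $i$; by the Kato inequality $\{\|\tau_i\|_i\}$ is then a uniformly bounded, uniformly Lipschitz family on $(M,\dr_i)$.

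First I would transport the $\tau_i$ to $M_\infty$ through the Gromov-Hausdorff approximations and extract, by an Arzel\`{a}--Ascoli argument, a subsequence along which $\|\tau_i\|_i$ converges uniformly to a Lipschitz function on $M_\infty$. On the regular region $M_\infty^{\mathrm{reg}}:=M_\infty\setminus(\overline{\Sc}\cup D_\infty)$, where by Theorem \ref{thm-smooth} and Lemma \ref{lem-T5.2} one has $\dr_i\to\dr_\infty$ and $H_i\to H_\infty$ in $C^\infty_{\mathrm{loc}}$, I would fix for large $i$ local holomorphic trivializations of the anticanonical power over small balls, write $\tau_i$ as local holomorphic functions with uniformly bounded sup norms, and combine $\bar\partial\tau_i=0$ with interior Cauchy estimates to upgrade to uniform $C^k_{\mathrm{loc}}$ bounds. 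A diagonal argument then gives a further subsequence with $\tau_i\to\tau_\infty$ in $C^\infty_{\mathrm{loc}}(M_\infty^{\mathrm{reg}})$, so that $\tau_\infty$ is a holomorphic section of $lK_{M_\infty}^{-1}$ over $M_\infty^{\mathrm{reg}}$, with $H_\infty(\tau_\infty,\tau_\infty)$ bounded and, by Fatou's lemma applied to the weakly converging volume measures, $\int_{M_\infty}H_\infty(\tau_\infty,\tau_\infty)\dr_\infty^n\le1$.

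It remains to extend $\tau_\infty$ holomorphically across $\overline{\Sc}\cup D_\infty$. When $T<1$ this set is contained, by Theorems \ref{thm-limit space} and \ref{thm-smooth}, in $\Sc$, which has complex codimension at least $2$, so the bounded (hence locally $L^2$) section $\tau_\infty$ extends by the classical removable-singularity theorem for closed sets of vanishing $(2n-2)$-dimensional Hausdorff measure. When $T=1$ the part $\overline{\Sc}$ has codimension at least $4$ and is handled in the same way, while $D_\infty$ is a genuine divisor of $K_{M_\infty}^{-l}$; but by Theorem \ref{thm-limit space} the metric $\dr_\infty$, and therefore --- by the nonsingularity and curvature relations for $c_1,c_2$ in \eqref{eq:hermitian}, \eqref{cond} transplanted to the limit --- the associated Hermitian metric $H_\infty$, extends smoothly across $D_\infty$, so near a generic point of $D_\infty$ the metrics converge smoothly, the uniformly bounded holomorphic $\tau_i$ converge to a bounded holomorphic $\tau_\infty$, and this extends across $D_\infty$ (off a residual set of complex codimension at least $2$, across which one extends as before) by Riemann's extension theorem. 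This produces $\tau_\infty\in H^0(M_\infty,lK_{M_\infty}^{-1})$.

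The main obstacle I anticipate is precisely this extension across the limit divisor when $T=1$, and with it the verification that the associated Hermitian metric $H_\dr$ of \eqref{eq:hermitian} --- rather than the bare volume form $\dr^n$ --- is the correct object to track, and that its limit $H_\infty$ is nonsingular across $D_\infty$. This is where our continuity path \eqref{eq:path} genuinely differs from the conical \ka-Einstein path of \cite{Ti15,CDS2}, but it reduces to the identities for $c_1,c_2$ already established in Section 3 together with $R(H_\dr)=\dr$. Everything else --- the uniform estimates, the equicontinuity, and the $C^\infty_{\mathrm{loc}}$ convergence on the regular part --- is a routine adaptation of the corresponding steps in \cite{Ti15}.
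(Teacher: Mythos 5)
Your overall strategy --- uniform sup and gradient bounds from Lemma \ref{lem-T4.2}, Arzel\`a--Ascoli for the norms $\|\tau_i\|_i$, local trivializations plus Cauchy estimates to get $C^\infty_{\mathrm{loc}}$ convergence on the regular part, and then an extension argument --- is exactly the route the paper intends (it simply defers to Lemmas 5.2 and 5.3 of \cite{Ti15}), and the part of your argument up to producing a bounded holomorphic section on $M_\infty\setminus(\overline{\Sc}\cup D_\infty)$ with $\int H_\infty(\tau_\infty,\tau_\infty)\dr_\infty^n\le 1$ is fine, as is your treatment of $D_\infty$ in the case $T=1$ via Riemann extension.

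However, your extension step in the case $T<1$ rests on a false premise. You assert that for $T<1$ the singular set $\Sc$ has complex codimension at least $2$, hence vanishing $(2n-2)$-dimensional Hausdorff measure, and you invoke the corresponding removable-singularity theorem. But by Theorem \ref{thm-limit space} the codimension bound in that case is only \emph{real} codimension $2$, and by Theorem \ref{thm-smooth} (and Theorem \ref{thm-T5.9}) the set $\Sc$ for $T<1$ actually \emph{contains} the limit divisor $D_\infty$, which is of complex codimension $1$ and has positive, locally finite $(2n-2)$-Hausdorff measure; so the theorem you cite does not apply, and the extension across $D_\infty$ when $T<1$ needs a different argument. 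There is also a more basic point: at the stage where this lemma is used (before the partial $C^0$-estimate and Theorem \ref{thm-T5.9}), $M_\infty$ carries no complex-analytic structure across $\Sc$ into which one could ``extend'' holomorphically; what the paper (following \cite{Ti15}) actually produces and uses is a bounded holomorphic section on $M_\infty\setminus\Sc$ (extended across $D_\infty$ by Riemann's theorem only when $T=1$, where $M_\infty\setminus\overline{\Sc}$ is a complex manifold and $D_\infty$ is an analytic divisor there), the full statement on the normal variety $M_\infty$ being available only after Theorem \ref{thm-T5.9}. So you should either weaken the conclusion of your last step to extension over $M_\infty\setminus\Sc$ (which suffices for the application), or postpone the extension across $\Sc$ until the normality of $M_\infty$ is established, rather than appeal to a measure-zero removability criterion that the divisor component violates.
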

Now we can begin the proof of Theorem \ref{thm-main}. The main steps are quite similar to \cite{Ti15} so here we only sketch the main steps of the proof. First we note that by the definition \eqref{eq:bergman}, the fact that $||\sigma||_i$ and their covariant derivatives are uniformly bounded and the finite covering argument, we only need to show that for any $x\in M_\infty,$ there is an $l=l_x$ and a sequence $x_i\in M$ such that $x_i\to x$ and
\begin{equation}\label{eq:bergman-local}
\inf\limits_i\rho_{\dr_i,l}(x_i)>0.
\end{equation}
Next we need the following lemma on H\"ormander's $L^2$-estimate for the $\dbar$-operator on $(M,\dr_i),$ which
follows from the $L^2$-estimate with respect to the approximating metrics constructed in Theorem
\ref{thm-shen}:
\begin{lemma}\label{lem-L2}
For any $l>0,$ given a $(0,1)$ form $\zeta$ with values in $K_M^{-l}$ with $\dbar\zeta=0,$ there
is a smooth section $\vartheta$ of $K_M^{-l}$ such that $\dbar\vartheta=\zeta$ and moreover,
$$\int_{M}||\vartheta||_i^2\dr_i^n\leq\frac{1}{l+t_i}\int_{M}||\zeta||_i^2\dr_i^n,$$
where $||\cdot||_i$ denotes the norm induced by $\dr_i$ as before.
\end{lemma}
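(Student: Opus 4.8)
The plan is to derive the estimate on the conic metric $\dr_i$ from the standard H\"ormander $L^2$-estimate applied to the smooth approximating metrics of Theorem \ref{thm-shen}, followed by a limiting argument, exactly as in the conical case treated in \cite{Ti15}. Since the continuity path \eqref{eq:path} gives $Ric(\dr_i)\geq t_i\dr_i$, Theorem \ref{thm-shen} (applied with $\mu=t_i$) produces, for each small $\de>0$, a smooth \ka\ metric $\dr_{i,\de}$ with $[\dr_{i,\de}]\in 2\pi c_1(M)$ and $Ric(\dr_{i,\de})\geq t_i\dr_{i,\de}$, converging to $\dr_i$ smoothly away from $D$ and in the Gromov--Hausdorff sense. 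Hence $K_M^{-1}$ carries a smooth Hermitian metric $H_{i,\de}$ with curvature $R(H_{i,\de})=\dr_{i,\de}$, which we may choose so that $H_{i,\de}\to H_{\dr_i}$ smoothly away from $D$, where $H_{\dr_i}$ is the nonsingular metric of \eqref{eq:hermitian}; we then equip $K_M^{-l}$ with $H_{i,\de}^{\otimes l}$, whose curvature is $l\dr_{i,\de}$.

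On the compact smooth \ka\ manifold $(M,\dr_{i,\de})$ I would invoke the Bochner--Kodaira--Nakano $L^2$-estimate for $(K_M^{-l},H_{i,\de}^{\otimes l})$. Rewriting a $K_M^{-l}$-valued $(0,1)$-form as an $(n,1)$-form valued in $K_M^{-l}\otimes K_M^{-1}$ absorbs a Ricci term, so the curvature that enters is
$$i\Theta(K_M^{-l},H_{i,\de}^{\otimes l})+Ric(\dr_{i,\de})=l\dr_{i,\de}+Ric(\dr_{i,\de})\geq(l+t_i)\,\dr_{i,\de}>0.$$
Therefore, for the given $\bar\partial$-closed $\zeta$ (which, being smooth, has finite $L^2$-norm with respect to $\dr_i$ because the conic degeneration dampens the transverse $(0,1)$-components), there is a section $\vartheta_\de$ of $K_M^{-l}$, of minimal norm, with $\bar\partial\vartheta_\de=\zeta$ and
$$\int_M||\vartheta_\de||_{i,\de}^2\,\dr_{i,\de}^n\leq\frac{1}{l+t_i}\int_M||\zeta||_{i,\de}^2\,\dr_{i,\de}^n,$$
$||\cdot||_{i,\de}$ denoting the norm induced by $\dr_{i,\de}$ and $H_{i,\de}^{\otimes l}$; the constant $\frac{1}{l+t_i}$ does not depend on $\de$.

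Finally I would let $\de\to 0$. The right-hand side converges to $\int_M||\zeta||_i^2\dr_i^n$ by dominated convergence: away from $D$ this follows from the smooth convergences $\dr_{i,\de}\to\dr_i$ and $H_{i,\de}\to H_{\dr_i}$, while near $D$ the integrand stays uniformly controlled because $H_{\dr_i}$ is nonsingular and the degenerating \ka\ metric suppresses the dangerous components of $\zeta$. The accompanying uniform $L^2$-bound on $\vartheta_\de$ then yields, along a subsequence, a weak $L^2$-limit $\vartheta$ with $\bar\partial\vartheta=\zeta$; lower semicontinuity of the $L^2$-norm under weak limits, combined with the convergence of the right-hand side, gives $\int_M||\vartheta||_i^2\dr_i^n\leq\frac{1}{l+t_i}\int_M||\zeta||_i^2\dr_i^n$. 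Since $\zeta$ is smooth and the equation $\bar\partial\vartheta=\zeta$ is independent of the metric, $\bar\partial$-regularity shows that $\vartheta$ is a smooth section of $K_M^{-l}$ on all of $M$. The principal difficulty is precisely this passage to the limit: one must control, uniformly in $\de$, the $L^2$-norms of $\zeta$ and of the solutions $\vartheta_\de$ as the conic singularities along $D$ form, which is carried out following Tian's treatment in \cite{Ti15}.
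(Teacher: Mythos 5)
Your proposal is correct and follows the same route the paper indicates: deducing the estimate from H\"ormander's $L^2$-estimate for the smooth approximating metrics of Theorem \ref{thm-shen}, whose curvature bound $l\dr_{i,\de}+Ric(\dr_{i,\de})\geq(l+t_i)\dr_{i,\de}$ yields the constant $\frac{1}{l+t_i}$, and then passing to the limit $\de\to 0$. In fact you supply more detail (the weak-limit and lower semicontinuity argument) than the paper, which only cites the approximation theorem.
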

Next, recall that in Theorem \ref{thm-tangent cone} we established the existence and basic properties of tangent cones $\Cc_x$ for $x\in(M_\infty,\dr_\infty).$ Then by taking a subsequence if necessary, for $r_j\to 0,$ we have a tangent cone $\Cc_x$ at $x$ which is the Gromov-Hausdorff
limit of $(M_\infty,r_j^{-2}\dr_\infty,x)$ satisfying:
\begin{enumerate}
\item [(T1)]Each $\Cc_x$ is regular outside a closed subcone $\Sc_x$ of complex codimension at least 1, where $\Sc_x$ is the singular set of $\Cc_x.$
\item [(T2)]There is a nature \ka\ Ricci-flat metric $g_x$ on $\Cc_x\setminus\Sc_x$ which is also a cone metric, with the \ka\ form $\ddb\rho_x^2$ on the regular part of $\Cc_x$ where
    $\rho_x$ denotes the distance function from the vertex $o$ of $\Cc_x.$
\end{enumerate}
Denote $L_x=\Cc_x\times\C$ as the trivial line bundle over $\Cc_x$ equipped with the Hermitian metric $e^{-\rho_x^2}|\cdot|^2$ thus the curvature of this metric is just $\dr_x.$ For any $\e>0$
put $$V(x;\e):=\{y\in\Cc_x|y\in B_{\e^{-1}}(0,g_x)\setminus\overline{B_\e(0,g_x)},d(y,\Sc_x)>\e\}.$$ Choose the sequence $r_j\to 0$ such that $r_j^{-2}$ are integers and $(M_\infty,r_j^{-2}\dr_\infty,x)$ converges to $(\Cc_x,g_x,o).$ By \cite{CCT} for any $\e,\de>0$ there exists a $j_0$ such that for any $j\geq j_0$ there is a diffeomorphism $\phi:V(x;\e/4)\mapsto M_\infty\setminus\Sc$ satisfying:
\begin{enumerate}
\item [(1)]$d(x,\phi(V(x;\e)))<10\e r$ and $\phi(V(x;\e))\subset B_{(1+\e^{-1})r}(x)$ where $r=r_j.$
\item [(2)]Denote $g_\infty$ as the \ka\ metric with the \ka\ form $\dr_\infty,$ then
\begin{equation}\label{eq:app-cone}
||r^{-2}\phi^*g_\infty-g_x||_{C^6(V(x;\e/2)}\leq\de,
\end{equation}
 with respect to $g_x.$
\end{enumerate}
From the settings above, we have the following lemma:
\begin{lemma}\label{lem-T5.7}\emph{(\cite{Ti15} Lemma 5.7)}
Given any $\e>0$ and small $\de>0$ there are a sufficiently large $l=r^{-2},$ a diffeomorphism
$\phi:V(x;\e/4)\mapsto M_\infty\setminus\Sc$ with properties (1) and (2) above and an isomorphism
$\psi$ from $\Cc_x\times\C$ onto $K_{M_\infty}^{-l}$ over $V(x;\e)$ commuting with $\phi$ satisfying
\begin{equation}\label{eq:app-cone-bundle}
||\psi(1)||^2=e^{-\rho_x^{2}}\quad\emph{and}\quad ||\nabla\psi||_{C^4(V(x;\e))}\leq\de,
\end{equation}
where $||\cdot||$ denotes the induced norm on $K_{M_\infty}^{-l}$ by $\dr_\infty$ and
$\nabla$ denotes the covariant derivative with respect to $||\cdot||$ and $e^{-\rho_x^2}|\cdot|^2.$
\end{lemma}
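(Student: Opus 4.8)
The plan is to obtain $\psi$ as the isomorphism realizing the convergence, \emph{as Hermitian line bundles with connection}, of $(K_{M_\infty}^{-l},||\cdot||)$ near $x$ to the model $(L_x,e^{-\rho_x^2}|\cdot|^2)$; the whole point is that after the identification $\phi$ the two bundles have nearly equal curvature. Indeed, $e^{-\rho_x^2}|\cdot|^2$ on $L_x$ has Chern curvature $\ddb\rho_x^2=\dr_x$, while the metric on $K_{M_\infty}^{-l}$ induced by $H_\infty$ has curvature $l\dr_\infty$ (the construction of $H_\infty$ being designed so that $R(H_\infty)=\dr_\infty$), so property (2) together with $l=r^{-2}$ gives $||\,l\,\phi^{*}\dr_\infty-\dr_x\,||_{C^{5}(V(x;\e/2))}\le C\de$. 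I would also arrange, by choosing $\phi$ almost holomorphically — possible after shrinking the admissible $\de$, since both metrics are \ka\ with limit $g_x$ — that $\phi^{*}J_\infty$ is $C^{5}$-close to the complex structure $J_x$ of $\Cc_x$; this is needed only downstream, so that $\psi$ carries holomorphic sections of $L_x$ to almost-holomorphic sections of $K_{M_\infty}^{-l}$.

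Next I would pass to the difference bundle $E:=\phi^{*}K_{M_\infty}^{-l}\otimes L_x^{-1}$ over $V(x;\e)$, with the induced metric and Chern connection; its curvature is the small form $l\,\phi^{*}\dr_\infty-\dr_x$. A smooth bundle isomorphism $\psi\colon L_x\to\phi^{*}K_{M_\infty}^{-l}$ over $V(x;\e)$ commuting with $\phi$ is exactly a nowhere-vanishing section $s$ of $E$, via $\psi(1)=1\otimes s$, and then $||\psi(1)||^{2}=e^{-\rho_x^{2}}||s||^{2}$ while $||\nabla\psi||_{C^{4}}$ is bounded in terms of $||\nabla s||_{C^{4}}$, $||\,l\,\phi^{*}\dr_\infty-\dr_x\,||_{C^{4}}$ and $||\phi^{*}J_\infty-J_x||_{C^{4}}$. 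So the lemma reduces to producing a section $s$ of $E$ with $||s||\equiv1$ and $||\nabla s||_{C^{4}(V(x;\e))}\le\de$.

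To construct $s$, first note that $E$ is topologically trivial over $V(x;\e)$: by Theorem \ref{thm-tangent cone}, $V(x;\e)$ is a Euclidean ball times an annular region in the regular part of the link of $\Sc'_x$ (homotopy equivalent to a circle in the crucial case (C4)), and $c_1(E)=c_1(\phi^{*}K_{M_\infty}^{-l})|_{V(x;\e)}$ is an integral class which in real coefficients is cohomologous to the small closed form $l\,\phi^{*}\dr_\infty-\dr_x$ — here one uses that $\dr_x=\ddb\rho_x^2$ is exact — hence vanishes. Fix a section $s_0$ of $E$ with $||s_0||\equiv1$; in this frame the connection form $\omega_0$ is purely imaginary, $d\omega_0$ is the small curvature of $E$, and the period $\oint_{\gamma}\omega_0$ over the generator $\gamma$ of $\pi_1(V(x;\e))$ equals $\sqrt{-1}$ times $l\int_{D_\infty}\dr_\infty-\int_{D_x}\dr_x$, where $D_x$ is a disk in $\Cc_x$ bounding $\gamma$ (meeting $\Sc_x$ in one point) and $D_\infty$ a surface bounding $\phi(\gamma)$ in $M_\infty$; this is small by the discussion below. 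Solving a $d$-equation on the slightly larger domain $V(x;\e/2)$, with the uniform elliptic estimates afforded by the controlled cone geometry of $g_x$, one finds a real $\lambda$ with $||\,\omega_0+\sqrt{-1}\,d\lambda\,||_{C^{4}(V(x;\e))}\le\de$; then $s:=e^{\sqrt{-1}\lambda}s_0$ has $||s||\equiv1$ and connection form $\omega_0+\sqrt{-1}\,d\lambda$, so $||\nabla s||_{C^{4}}\le\de$, and the resulting $\psi$ has all the asserted properties.

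The main obstacle is the holonomy-matching step — the smallness of $l\int_{D_\infty}\dr_\infty-\int_{D_x}\dr_x$ — because the bounding surfaces necessarily meet $\Sc$, where the $C^{6}$-convergence of property (2) degenerates. I would deal with this by taking $D_x$ and $D_\infty$ to correspond through $\phi$, up to the $C^{6}$-error, outside a thin tube $B_\sigma(\Sc_x)$, so that $|\,l\int_{D_\infty\setminus\phi(B_\sigma)}\dr_\infty-\int_{D_x\setminus B_\sigma}\dr_x\,|\le C\de$, while both $\int_{D_x\cap B_\sigma(\Sc_x)}\dr_x$ and its $M_\infty$-counterpart tend to $0$ as $\sigma\to0$: the first by local integrability of $\dr_x$ near $\Sc_x$ (an explicit flat-cone estimate in case (C4)), the second by the Cheeger-Colding-Tian volume convergence; letting $\sigma\to0$ and then $\de\to0$ finishes it. The remaining points — the precise topological triviality of $E$ (including possible torsion, handled as in \cite{Ti15}), the uniform Hodge/elliptic estimates on $V(x;\e/2)$, and the almost-holomorphic choice of $\phi$ — are routine consequences of Theorem \ref{thm-tangent cone} and the convergence set up above.
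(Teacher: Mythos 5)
There is a real gap, and it sits exactly at the step you yourself flag as "the main obstacle": the holonomy-matching. Your local construction (unitary frame, small curvature of the difference bundle $E=\phi^{*}K_{M_\infty}^{-l}\otimes L_x^{-1}$, gauge change $s=e^{\sqrt{-1}\lambda}s_0$) mirrors the first half of the paper's argument, where morphisms $\tilde{\psi}_\al$ are built by parallel transport and shown to have nearly constant transition functions. But your claim that the period $\oint_\gamma\omega_0$ is small cannot be established the way you propose, and in fact it is false in general for a fixed large $l=r_j^{-2}$. The Stokes computation requires pushing a bounding surface through the singular set: $\phi$ is only defined on $V(x;\e/4)\subset\Cc_x\setminus\Sc_x$, so $D_x$ cannot be transported, and in $M_\infty$ the curvature of $(K_{M_\infty}^{-l},H_\infty)$ is only identified with $l\dr_\infty$ on the regular part; across $\Sc\cup D_\infty$ the curvature current is not controlled at this stage (for $T<1$ the metric is conical-type along the limit of $D$, and the construction of $H_\infty$ involves the generally irrational exponents $c_1,c_2$), so the "hole contribution" is not a small area term mod $2\pi\Z$ but can carry a definite divisorial/angle defect. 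Likewise, your topological-triviality argument only kills the free part of $c_1(E)$ in $H^2(V(x;\e);\Z)$: torsion classes, and more generally finite-order holonomies coming from the torsion of $H_1(E_\e,\Z)$, pair trivially with real forms and are invisible to the smallness of $l\phi^*\dr_\infty-\dr_x$, yet they obstruct finding a unit section with small connection form.

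This is precisely why the paper (following Tian's Lemma 5.7) does not prove the statement for every large $l$, but uses the freedom in the statement to change $l$: the nearly-constant transition functions define a flat $S^1$-bundle $F$ over $U(x;\e',\e)$, classified by a representation of $\pi_1(E_\e)\to H_1(E_\e,\Z)\to S^1$, where $H_1(E_\e,\Z)$ is a finite-rank abelian group plus a finite group of order $\nu$; one then chooses an integer $k$ so that $F^k$ has transition functions $\de'$-close to the identity, replaces $l$ by $kl$, rescales $\phi$ by $y\mapsto k^{-1/2}y$ (using that $(M_\infty,kr_j^{-2}\dr_\infty,x)$ still converges to the same cone), and only then perturbs to get $\psi$. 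Your proposal, as written, asserts the conclusion for the original $l$ and therefore proves a stronger statement than the lemma; without an argument replacing the power trick (or a genuine proof that the holonomy around loops linking $\Sc_x$ is close to $1$ for the given $l$, which the conical structure along $D_\infty$ contradicts), the construction of the unit section $s$ with $||\nabla s||_{C^4}\leq\de$ does not go through.
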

\begin{proof}
We sketch the main steps of the proof and suggest \cite{Ti15} for the details. By the approximation construction on the tangent cone above, we can cover $V(x;\e')$ by finite many
geodesic balls $B_{s_\al}(y_\al)(1\leq\al\leq N)$ satisfying that $\overline{B_{2s_\al}(y_\al)}$
is strongly convex and contained in $\Cc_x\setminus\Sc_x,$ $B_{s_\al/2}(y_\al)$ are mutually disjoint, and $s_\al\geq\nu_x d(y_\al,\Sc_x)$ for some constant $\nu_x.$

For $l=r^{-2}$ and $\phi$ above we can first construct bundle morphism $\tilde{\psi}_\al$ over balls $B_{2s_\al}(y_\al).$ Let $\gamma_y\subset B_{2s_\al}(y_\al)$ be the unique minimizing geodesic connecting $y_\al$ and $y\in B_{2s_\al}(y_\al).$ At $y_\al$ define $\tilde{\psi}_\al(1)\in L|_{\phi(y_\al)}$ such that $$||\tilde{\psi}_\al(1)||^2=e^{-\rho_x^2(y_\al)}$$ where $L=K_{M_\infty}^{-l}.$
Then for $y\in U_\al:=B_{2s_\al}(y_\al),$ set $a(y)$ and $\tau(\phi(y))$ as the parallel transportation of 1 and $\tilde{\psi}_\al(1)$ along $\gamma_y$ and $\phi(\gamma_y)$
with respect to the norms $e^{-\rho_x^2}|\cdot|^2$ and $||\cdot||,$ then we can define
$$\tilde{\psi}_\al(a(y))=\tau(\phi(y)).$$
By this construction obviously the first condition in \eqref{eq:app-cone-bundle} holds for $\tilde{\psi}_\al$ over $U_\al.$ For the derivative estimate in the second one, note that
$a:U_\al\mapsto U_\al\times\C$ and $\tau:U_\al\mapsto\phi^*L|_{U_\al}$ satisfy that
$\tilde{\psi}_\al(a)=\tau,$ we have $$\nabla\tau=\nabla\tilde{\psi}_\al(a)+\tilde{\psi}_\al(\nabla a),$$ where the covariant derivative is taken with respect to the metrics defined above. By
the definition of $\tilde{\psi}_\al$ it follows that $\nabla\tilde{\psi}_\al(y_\al)=0.$ At $y,$
take the covariant derivative along $\gamma_y$ it follows that
$$\nabla_T\nabla_X\tau=\nabla_T(\nabla_X\tilde{\psi}_\al(a))+\tilde{\psi}_\al(\nabla_T\nabla_X a),$$ where $T$ is the unit tangent of $\gamma_y$ andy $X$ is a vector field along $\gamma_y$
with $[T,X]=0,$ and note that $\nabla_T\tilde{\psi}_\al=0$ by the definition. Similarly, switch
$X$ and $T$ it follows that $$\nabla_X\nabla_T\tau=\tilde{\psi}_\al(\nabla_X\nabla_T a).$$
Take the difference of these two formulas it follows from the curvature formula that
$$l\phi^*\dr_\infty(T,X)\tilde{\psi}_\al(a)=\nabla_T(\nabla_X\tilde{\psi}_\al(a))+\dr_x(T,X)a.$$
Note that as $l\to\infty,$ $l\phi^*\dr_\infty$ converges to $\dr_x,$ it follows that $\nabla_T(\nabla_X\tilde{\psi}_\al(a))$ is quite small when $l$ is sufficiently large. As
$\nabla_X\tilde{\psi}_\al=0$ at $y_\al,$ $||\nabla\tilde{\psi}_\al||_{C^0(U_\al)}$ can be made
sufficiently small. Higher derivative estimates could be concluded by inductions.

Next we need to modify each $\tilde{\psi}_\al.$ By the construction above, for any $\al,\gamma,$
we have the transition function $$\theta_{\al\gamma}=\tilde{\psi}_\al^{-1}\circ\tilde{\psi}_\al:U_\al\cap U_\gamma\mapsto S^1$$
by the first conclusion for $\tilde{\psi}_\al$ in \eqref{eq:almost holo sec}. Those transition functions compose a closed cycle $\{\theta_{\al\gamma}\}.$ From the derivative estimate of $\tilde{\psi}_\al$ each $\theta_{\al\gamma}$ is close to a constant. Thus we can multiply each $\tilde{\psi}_\al$ by a suitable unit function such that each $\theta_{\al\gamma}$ is a unit constant and all the derivative estimates for $\tilde{\psi}_\al$ are still quite small. Then the modified cycles $\theta_{\al\gamma}$ give rise to a flat bundle $F$ which essentially induces an
isomorphism $\xi:F\mapsto K_{M_\infty}^{-l}$ over an neighborhood of $\overline{V(x;\e')}$ satisfying all the estimates in \eqref{eq:almost holo sec}. The isomorphism $\xi$ induces an isomorphism $\xi^{k}:F^k\mapsto K_{M_\infty}^{-kl}$ for each $k.$ Furthermore, choose $E_\e\subset\partial B_1(o,g_x)\setminus\Sc_x$ with
$$d_{\partial B_1(o,g_x)}(E_\e,\Sc_x\cap\partial B_1(o,g_x))\geq\e^2$$
such that the topology of $E_\e$ depends only on $\e$ and $\Sc.$ Then choose $\e'$ and set
$$U(x;\e',\e)=\{y\in\Cc_x|\sqrt{\e'}<d(x,y)<\e^{-1},\frac{y}{d(x,y)}\in E_\e\}$$ such that
$U(x;\e',\e)\subset V(x,\e').$

The flat bundle $F|_{U(x;\e',\e)}$ is given by a representation
$$\rho:\pi_1(U(x;\e',\e))=\pi_1(E_\e)\mapsto H_1(E_\e,\mathbb{Z})\mapsto S^1.$$
Note that $H_1(E_\e,\mathbb{Z})$ is the sum of an abelian group of finite rank $m$ and a finite group of order $\nu$ which depend only on $\e$ and $\Sc.$ Thus we can choose $k$ such that
$F^k$ is essentially trivial on the scale of $\de,$ i.e., the corresponding transition functions
are in a $\de'$-neighborhood of the identity in $S^1$ where $\de'$ depends on and is much smaller than $\de.$

Replace the original $l$ by $kl$ and set $\e'$ such that
$$k^{-1/2}V(x;\e):=\{y\in\Cc_x|\e<\sqrt{k}d(x,y)<\e^{-1},\sqrt{k}d(y,\Sc_x)>\e\}\subset U(x;\e',\e).$$ Correspondingly, redefine $\phi$ as the original $\phi$ composed with the scaling
$y\mapsto k^{-1/2}y$ on $\Cc_x.$ Since $(M_\infty,kr_j^{-2}\dr_\infty,x)$ also converge to the cone $(\Cc_x,g_x,x)$ all the properties and constructions before also follow. Moreover the new flat bundle $F$ which is the $k$-th power of the original one will have transition functions $\de'$-close to the identity for $\de'\ll\de.$ Thus we can modify $\tilde{\psi}_\al$ slightly and finally we complete all the constructions satisfying the Lemma.
\end{proof}

Now we can begin to show \eqref{eq:bergman-local}, and consequently Theorem \ref{thm-main}.
The main idea is same to \cite{Ti90,Ti15}. First, we need to construct an approximated holomorphic
section $\tilde{\tau}$ on $M_\infty$ then perturb it into a holomorphic section $\tau$ on $(M,\dr_i)$ by the smooth convergence result and the $L^2$-estimate for $\dbar$-operators. Finally
by the derivative estimate we can conclude that $\tau(x)\neq 0.$

Let $\e,\de>0$ sufficiently small and be determined later. Fix $l=r^{-2}$ where $r=r_j$ for sufficiently large $j$ such that the conditions of Lemma \ref{lem-T5.7} holds. Then by Lemma
\ref{lem-T5.7}, choose $\phi,\psi$ and thus there is a section $\tau=\psi(1)$ of $K_{M_\infty}^{-l}$ on $\phi(V(x;\e))$ satisfying
\begin{equation}\label{eq:almost holo sec}
||\tau||^2=e^{-\rho_x^2}\quad\textmd{and}\quad||\dbar\tau||\leq C\de
\end{equation}
for some uniform constant $C.$

Next we need the following technical lemma:
\begin{lemma}\label{lem-T5.8}\emph{(\cite{Ti15} Lemma 5.8)}
For any $\bar{\e}>0,$ there is a smooth function $\gamma_{\bar{\e}}$ on $\Cc_x$ satisfying:
\begin{enumerate}
\item [(1)]$\gamma_{\bar{\e}}(y)=1$ if $d(y,\Sc_x)\geq\bar{\e}$ where $d$ is the distance of $(\Cc_x,g_x).$
\item [(2)]$0\leq\gamma_{\bar{\e}}\leq 1$ and $\gamma_{\bar{\e}}(y)=0$ in an neighborhood of $\Sc_x.$
\item [(3)]$|\nabla\gamma_{\bar{\e}}|\leq C$ for some constant $C=C(\bar{\e})$ and
$$\int_{B_{\bar{\e}^{-1}}(o,g_x)}|\nabla\gamma_{\bar{\e}}|^2\dr_x^n\leq\bar{\e}.$$
\end{enumerate}
\end{lemma}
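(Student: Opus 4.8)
The plan is to build $\gamma_{\bar\e}$ as a truncation of the distance function to the singular set $\Sc_x$, composed with a carefully chosen logarithmic-type cutoff so that the Dirichlet-energy bound in (3) is achieved. Concretely, I would fix a standard smooth cutoff $\chi:\R\to[0,1]$ with $\chi\equiv 0$ on $(-\infty,1]$, $\chi\equiv 1$ on $[2,\infty)$, and $0\le\chi'\le 2$, and then set $\gamma_{\bar\e}(y)=\chi\!\left(\tfrac{2}{\bar\e}\,d(y,\Sc_x)\right)$ after first mollifying $d(\cdot,\Sc_x)$ slightly (the distance function is only Lipschitz, so a smoothing on a scale $\ll\bar\e$ is needed to get a genuinely smooth $\gamma_{\bar\e}$ without affecting any of the three properties). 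Properties (1) and (2) are then immediate from the support and range of $\chi$; the only real content is the energy estimate in (3).

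For the energy bound I would exploit the structural facts about $\Cc_x$ established in Theorem \ref{thm-tangent cone}: $\Cc_x$ is a metric cone with vertex $o$, its singular set $\Sc_x$ is a closed subcone of complex codimension at least $1$ (hence real codimension at least $2$), and the regular part carries the Ricci-flat cone metric $g_x=\ddb\rho_x^2$. The Minkowski/Hausdorff content estimate for a set of codimension $\ge 2$ gives, for the tubular neighborhood $T_s(\Sc_x)=\{d(\cdot,\Sc_x)<s\}$ inside the fixed ball $B_{\bar\e^{-1}}(o,g_x)$, a volume bound of the form $\mathrm{Vol}\,(T_s(\Sc_x)\cap B_{\bar\e^{-1}})\le C(\bar\e)\,s^{2}$. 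Since $|\nabla\gamma_{\bar\e}|\le C\bar\e^{-1}$ and $\nabla\gamma_{\bar\e}$ is supported in the annular shell $\{\,\bar\e/2<d(\cdot,\Sc_x)<\bar\e\,\}$, one gets
\begin{equation}\label{eq:energy-crude}
\int_{B_{\bar\e^{-1}}(o,g_x)}|\nabla\gamma_{\bar\e}|^2\,\dr_x^n
\le C\bar\e^{-2}\cdot\mathrm{Vol}\bigl(\{d(\cdot,\Sc_x)<\bar\e\}\cap B_{\bar\e^{-1}}\bigr)
\le C(\bar\e)\,\bar\e^{-2}\cdot\bar\e^{2}=C(\bar\e).
\end{equation}
This crude truncation only yields a bound by a constant, not by $\bar\e$. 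To gain the extra smallness one standard device is to replace the single shell by a logarithmic cutoff across a range of scales $[\bar\e^{2},\bar\e]$: set $\gamma(y)=\zeta\!\left(\tfrac{\log d(y,\Sc_x)-\log\bar\e^{2}}{\log\bar\e^{2}-\log\bar\e}\right)$ for a fixed profile $\zeta$, so that $|\nabla\gamma|\lesssim \bigl(|\log\bar\e|\,d(\cdot,\Sc_x)\bigr)^{-1}$ on the dyadic shells, and estimate $\int|\nabla\gamma|^2$ by summing the codimension-$2$ volume bound $\mathrm{Vol}(\{2^{-j-1}<d<2^{-j}\})\le C 2^{-2j}$ against $|\nabla\gamma|^2\lesssim |\log\bar\e|^{-2}2^{2j}$ over the $\sim|\log\bar\e|$ relevant scales, giving $\int|\nabla\gamma|^2\lesssim C(\bar\e)\,|\log\bar\e|^{-1}$; shrinking $\bar\e$ and relabeling makes the right-hand side $\le\bar\e$. (One then still composes with $\chi$ to arrange $\gamma_{\bar\e}\equiv 1$ for $d\ge\bar\e$ exactly; the overlap region only improves the estimate.) This is precisely the cutoff-on-codimension-$2$-sets argument that appears in \cite{CC,CCT,Ti15}, and I would simply invoke the codimension estimate for $\Sc_x$ from Theorem \ref{thm-tangent cone} as the input.

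The main obstacle I anticipate is justifying the uniform Minkowski-content bound $\mathrm{Vol}(T_s(\Sc_x)\cap B_{\bar\e^{-1}})\le C(\bar\e)s^2$ on the possibly-singular cone $\Cc_x$: a priori Theorem \ref{thm-tangent cone} only gives \emph{Hausdorff} codimension $\ge 2$ for $\Sc_x$, whereas the energy computation wants the stronger Minkowski (tubular-volume) estimate. This is resolved by the Cheeger–Colding structure theory together with the fact that $\Cc_x$ is itself a Ricci-limit cone: the density/volume-convergence estimates of \cite{CC} upgrade Hausdorff codimension to the Minkowski content bound on any fixed compact annulus, with a constant depending only on $\bar\e$ (through the annulus) and the dimension. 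Once that is in hand, the rest is the routine dyadic summation sketched above, and all three claimed properties follow. I would present the mollification of $d(\cdot,\Sc_x)$ and the dyadic estimate in a few lines and reference \cite{CC,CCT,Ti15} for the content bound rather than reproving it.
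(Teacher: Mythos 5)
Your reduction of (1)--(2) to a log-type cutoff and your observation that a single linear truncation of $d(\cdot,\Sc_x)$ only gives a \emph{bounded} Dirichlet energy are both fine; the genuine gap is the key input you then invoke, namely the sharp tubular-volume (Minkowski) bound $\mathrm{Vol}\bigl(\{d(\cdot,\Sc_x)<s\}\cap B_{\bar{\e}^{-1}}\bigr)\leq C(\bar{\e})\,s^{2}$ at \emph{all} small scales $s$. This is not provided by \cite{CC,CCT}: Cheeger--Colding(--Tian) theory gives Hausdorff codimension statements, and the quantitative refinements available give Minkowski codimension $2-\eta$ for every $\eta>0$ but not $\eta=0$; your dyadic sum really needs the exponent $2$, since with $\mathrm{Vol}(\{2^{-j-1}<d<2^{-j}\})\leq C\,2^{-j(2-\eta)}$ the sum $\sum_j|\log\bar{\de}|^{-2}2^{2j}2^{-j(2-\eta)}$ over the $\sim|\log\bar{\de}|$ relevant scales blows up like $|\log\bar{\de}|^{-2}\bar{\de}^{-c\eta}$ as the transition scale $\bar{\de}\to 0$. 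Note also that here $\Sc_x$ has complex codimension one (it contains the limit of the divisor $D$), so no codimension-4 gain rescues the top stratum, and even the finiteness of $\mathcal{H}_{2n-2}(\Sc_x\cap B_R)$ --- which is what is actually proved in \eqref{eq:T-A19} --- does not by itself yield the tube-volume bound your distance-function cutoff requires. A minor further point: you should decouple the transition scale from $\bar{\e}$ (fix $\bar{\e}$, let the cutoff transition over $[\bar{\e}\bar{\de}^3,\bar{\e}\bar{\de}]$ and shrink $\bar{\de}$), since the $\bar{\e}$-dependence of your Minkowski constant undercuts the ``shrink $\bar{\e}$ and relabel'' step.

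The paper's proof is structured precisely to avoid needing such a metric-measure bound. It splits $\Sc_x=\Sc_x^0\cup\bar{\Sc}_x$: the stratum $\bar{\Sc}_x$ of complex codimension at least $2$ is handled by a ball covering with $\sum_a r_a^{2n-3}\leq 1$, where a crude cutoff $\chi$ already has energy $\leq C\e_0$ as in \eqref{eq:T-A20}; near the codimension-2 stratum $\Sc_x^0$ it does not use $d(\cdot,\Sc_x)$ at all, but the holomorphic maps $F_{i,j}$ of Lemma \ref{lem-T-A1}, which identify $\Sc_x$ locally with a divisor $D^j$ admitting a holomorphic defining function $f_b$; the cutoff is $\eta\bigl(\log(-\log(|f_b|\circ F_b/\bar{\e}))\bigr)$ as in \eqref{eq:T-A21}, and its energy is controlled through the integration-by-parts/slicing inequality of Lemma \ref{lem-T-A2} together with the measure bounds \eqref{eq:T-A16}--\eqref{eq:T-A19}, reducing matters to the one-variable computation $\int dr/(r(\log r)^2)$. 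So to repair your argument you must either prove the sharp codimension-2 Minkowski bound for $\Sc_x$ in this twisted conic setting (which the cited references do not give), or, as the paper does, exploit the local analytic-divisor structure of $\Sc_x^0$ and run the log-log cutoff on a holomorphic defining function rather than on the distance function.
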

\begin{proof}
See \cite{Ti15} for the details of the proof. Note that $\Sc_x$
is the union of $\Sc_x^0$ and $\bar{\Sc}_x$ where $\Sc_x^0$ is the part where all points
$y$ have a tangent cone of the form $\C^{n-1}\times\Cc'_y$ with the standard cone metric, and
$\bar{\Sc}_x$ is a subcone of complex codimension at least 2. In
the simplest case $\Sc_x=\C^{n-1},$ we can set a cutoff function $\eta$ satisfying that
$0\leq\eta\leq 1,|\eta'|\leq 1,$ and
$$\eta(t)=0\;\textmd{for}\;t>\log(-\log\bar{\de}^3)\quad\textmd{and}\quad\eta(t)=1\;\textmd{for}\;t<\log(-\log\bar{\de}).$$
Then we can define $$\gamma_{\bar{\e}}(y)=\eta\left(\log\left(-\log\left(\frac{\rho(y)}{\bar{\e}}\right)\right)\right),$$
and we have $$\nabla\gamma_{\bar{\e}}(y)=\frac{\eta'\nabla\rho(y)}{\rho(y)\log\left(\frac{\rho(y)}{\bar{\e}}\right)}.$$
By the standard computations, it follows that
$$\int_{B_{\bar{\e}^{-1}}(o,g_x)}|\nabla\gamma_{\bar{\e}}|^2\dr_x^n\leq
\frac{a_{n-1}}{\bar{\e}^{2n-2}}\int_{\bar{\de}^3}^{\bar{\de}}\frac{dr}{r(-\log r)^2}
\leq\frac{a_{n-1}}{\bar{\e}^{2n-2}(-\log\bar{\de})}.$$
Choose suitable constant $\bar{\de}$ we can make the integration is less than $\bar{\e}$ and
moreover it follows that $|\nabla\gamma_{\bar{\e}}|\leq C(\bar{\e}).$

In general, recall that $(\Cc_x,g_x,o)$ is the limit of $(M_i,r_i^{-2}\dr_i,x_i).$ Thus there are $\de_i$ tending to 0 and diffeomorphisms
$$\tilde{\phi}_i:V(x;\de_i)\mapsto M\setminus T_{\de_i}(D)\quad T_{\de_i}(D)=\{z|d_i(z,D)\leq\de_i\},$$ where $d_i$ denotes the distance with respect to $r_i^{-2}\dr_i$ satisfying
$$||r_i^{-2}\tilde{\phi}_i^*\dr_i-\dr_x||_{C^6(V(x;\de_i)}\leq\de_i.$$ Suppose $l_i=r_i^{-2}$ are integers.

Now for $y\in\Sc_x^0,$ by the assumption there are integers $k_j=s_j^{-2}$ such that $(\Cc_x,k_jg_x,y)$ converge to $(\Cc_y,g_{\bar{\be}},o)$ where $\Cc_y=\C^{n-1}\times\Cc'_y$ with
the flat conic metric $g_{\bar{\be}}$ as above. Thus there are diffeomorphisms
$$\vartheta_j:V(y;j^{-1})\subset\Cc_y\mapsto\Cc_x\setminus\Sc_x$$
satisfying
$$||s_j^{-2}\vartheta_j^*\dr_x-\dr_{\bar{\be}}||_{C^6(V(y;j^{-1})}\leq\frac{1}{j}.$$
Now it follows from the constructions above that for any $\de>0$ there exist $i_\de,j_\de$ such
that for any $i\geq i_\de,j\geq j_\de,$ we have $\tilde{\phi}_i\cdot\vartheta_j:V(y;j^{-1})\mapsto
M\setminus T_{\de_i}(D)$ satisfying
\begin{equation}\label{eq:T-A5}
||k_jl_i\vartheta_j^*\tilde{\phi}_i^*\dr_i-\dr_{\bar{\be}}||_{C^6(V(y;j^{-1})}\leq\de.
\end{equation}

Consider $\Cc_y\times\C$ as a bundle over $\Cc_y$ with the norm $e^{-|z'|^2-|z_n|^{2\bar{\be}}},$
take $f_0=\al_0,f_1=\al_1 z_1,\cdots,f_n=\al_n z_n,$ where $\al_0,\cdots,\al_n>0$ are chosen such that
\begin{equation}\label{eq:T-A6}
\int_{\C^{n-1}\times\Cc'_y}|f_k|^2e^{-|z'|^2-|z_n|^{2\bar{\be}}}\dr_{\bar{\be}}^n=1.
\end{equation}
Apply Lemma \ref{lem-T5.7} to each $f_k,$ then for sufficiently large $i,j$ there exist isomorphisms $\psi_{i,j}:\Cc_x\times\C\mapsto K_M^{-k_jl_i}$ over $V(y;j^{-1})$ satisfying
\begin{equation}\label{eq:T-A7}
||\psi_{i,j}(f_k)||^2=|f_k|^2e^{-|z'|^2-|z_n|^{2\bar{\be}}}\quad\textmd{and}
\quad||\nabla\psi_{i,j}||_{C^4(V(y;j^{-1})}\leq\de.
\end{equation}

We will see the partial $C^0$-estimate follows from Lemma \ref{lem-T5.8} later. Until now we have proved this lemma in case that $\Sc_x$ is a simple cone as above. Thus by the argument of the partial $C^0$-estimate in the following, we can show that there exist holomorphic sections $S_{i,j}^k$ of $K_M^{-k_jl_i}$ over $M$ such that
\begin{equation}\label{eq:T-A8}
\sup_{V(y;j^{-1})\cap B_{10}(o,g_{\bar{\be}})}|\psi^*_{i,j}S_{i,j}^k-f_k|\leq\frac{\e}{2},
\end{equation}
where $\e$ tends to 0 as $\de$ tends to 0. And moreover by the Moser iteration in the proof of Lemma \ref{lem-T4.2} in the same region we have
\begin{equation}\label{eq:T-A9}
||\nabla S_{i,j}^k||_i\leq C
\end{equation}
with respect to the Hermitian norm associated to $\dr_i$ in Section 3. By \eqref{eq:T-A8}
combined with the fact $f_0=\al_0>0,$ it follows that there exists $c>0$ depending only on $\al_0$ such that
\begin{equation}\label{eq:T-A10}
||S_{i,j}^0||_i\geq c\quad\emph{on}\;\tilde{\phi}_i(\vartheta_j(B_{10}(o,g_{\bar{\be}}))).
\end{equation}
Thus we can define a holomorphic map $F_{i,j}:\tilde{\phi}_i(\vartheta_j(B_{10}(o,g_{\bar{\be}})))\mapsto\C^n$ by
\begin{equation}\label{eq:T-A11}
F_{i,j}=\left(\frac{S_{i,j}^1(x)}{S_{i,j}^0(x)},\frac{S_{i,j}^2(x)}{S_{i,j}^0(x)},
\cdots,\frac{S_{i,j}^n(x)}{S_{i,j}^0(x)}\right).
\end{equation}
Then $F_{i,j}\cdot\tilde{\phi}_i\cdot\vartheta_j$ smoothly converge to $(f_1/f_0,\cdots,f_n/f_0)$ outside the singular set $\{z_n=0\}$ as $j,i\to\infty.$
Thus for sufficiently large $i,j$ we have
\begin{equation}\label{eq:T-A12}
|F_{i,j}\cdot\tilde{\phi}_i\cdot\vartheta_j(z)-(f_1/f_0,\cdots,f_n/f_0)(z)|\leq\e
\end{equation}
for any $z\in U_j:=\{(z',z_n)\in B_{10}(o,g_{\bar{\be}})||z_n|^{\bar{\be}}>j^{-1}\}\subset V(y;j^{-1}).$ As $i,j$ are sufficiently large we may assume $B_{8s_jr_i}(x_i,\dr_i)\subset \tilde{\phi}_i\cdot\vartheta_j(B_{10}(o,g_{\bar{\be}}))$ and moreover $F_{i,j}$ is a holomorphic map from $B_{8s_jr_i}(x_i,\dr_i)$ onto its image containing $B_{8-2\e}(o,g_{\bar{\be}})$ for sufficiently small $\e.$ By \eqref{eq:T-A9} it follows that
\begin{equation}\label{eq:T-A13}
\sup_{B_{8s_jr_i}(x_i,\dr_i)}|dF_{i,j}|_{\dr_i}\leq C(s_jr_i)^{-2},
\end{equation}
which implies that
\begin{equation}\label{eq:T-A14}
F_{i,j}^*\dr_0\leq C(s_jr_i)^{-2}\dr_i,
\end{equation}
where $\dr_0$ is the Euclidean metric on $\C^n.$

Next we need to show that for sufficiently large $j,$ $F_{i,j}(D\cap B_{7s_jr_i}(x_i,\dr_i))$ converge to a local divisor $D^j\subset\C^n$ as $i\to\infty$
where $D=\sum_r D_r.$
First we need to bound the volume of $F_{i,j}(D\cap B_{7s_jr_i}(x_i,\dr_i)).$ Recall that
$(\Cc_x,s_j^{-2}g_x,y)$ converge to the standard cone $\C^{n-1}\times\Cc'_y$ with the metric
$g_{\bar{\be}},$ for sufficiently large $i,j$ $F_{i,j}$ maps $D\cap B_{8s_jr_i}(x_i,\dr_i)$
into a tubular neighborhood $T_{8,\e}=\{(z',z_n)||z'|<8,|z_n|<\e\}.$ By the slicing argument in \cite{CCT} (or Theorem \ref{thm-tangent cone}) for each $z'$ with $|z'|<7.5,$ the complex line segment $\{(z',z_n)||z_n|<6\}$ intersects with $F_{i,j}(D_r\cap B_{7s_jr_i}(x_i,\dr_i))$ at $m_r(z')$ points (counted with multiplicity), where $m_r(z')$
satisfies that $1-\bar{\be}\geq\sum_r m_r(z')(1-\be_r)$ as Theorem \ref{thm-tangent cone}.
Moreover, for any such $z'$ there is $m>0$ such that $\sum_r m_r(z')\leq m.$

Let $\tilde{\eta}:\R\mapsto\R$ be a cut-off function satisfying $\tilde{\eta}(t)=1$ for $t\leq 56,\tilde{\eta}(t)=0$ for $t>60,|\tilde{\eta}'(t)|\leq 1,$ and $|\tilde{\eta}''(t)|\leq 2.$ Then we have
\begin{align*}
&\int_{F_{i,j}(D\cap B_{8s_jr_i}(x_i,\dr_i))}\tilde{\eta}(|z'|^2)\dr_0^{n-1}\\
=&\int_{F_{i,j}(D\cap B_{8s_jr_i}(x_i,\dr_i))}\tilde{\eta}(|z'|^2)(\sqrt{-1}dz'\wedge d\bar{z'}+\ddb|z_n|^2)^{n-1}\\
=&\int_{F_{i,j}(D\cap B_{8s_jr_i}(x_i,\dr_i))}(\tilde{\eta}+|z_n|^2(\tilde{\eta}'+(n-1)|z'|^2\tilde{\eta}'')
(\sqrt{-1}dz'\wedge d\bar{z'})^{n-1}.
\end{align*}
It follows that
\begin{equation}\label{eq:T-A15}
\int_{F_{i,j}(D\cap B_{7.4s_jr_i}(x_i,\dr_i))}\dr_0^{n-1}\leq 200nm.
\end{equation}

By the argument in section 3, we can show that the limit of $D$ coincides with $\Sc_x$ modulo a subset of Hausdorff codimension at least 4 under the Gromov-Hausdorff convergence of $(M,r_i^{-2}\dr_i,x_i)$ to $(\Cc_x,\dr_x,o)$ and thus $F_{\infty,j}(\Sc_x\cap B_{7s_j}(y,g_x))$ coincides with $D^j.$

By the monotonicity of the subvariety $F_{i,j}(D)$ combined with \eqref{eq:T-A14}, as $i,j$ are sufficiently large it follows
that
$$1\leq\int_{F_{i,j}(D)\cap B_4(o,\dr_0))}\dr_0^{n-1}\leq
\int_{D\cap B_{6s_jr_i}(x_i,\dr_i)}F^*_{i,j}\dr_0^{n-1}\leq
C\int_{D\cap B_{6s_jr_i}(x_i,\dr_i)}(s_jr_i)^{2-2n}\dr_i^{n-1}.$$
Then as $i\to\infty$ it follows that
\begin{equation}\label{eq:T-A16}
\frac{s_j^{2n-2}}{C}\leq s_j^{2n-2}\int_{\Sc_x\cap B_{6s_j}(y,\dr_x)}F^*_{\infty,j}\dr_0^{n-1}\leq\mathcal{H}_{2n-2}(\Sc_x\cap B_{6s_j}(y,\dr_x)),
\end{equation}
where $\mathcal{H}_{2n-2}$ denotes the $(2n-2)$-dimensional Hausdorff measure with respect to $g_x.$

To summarize, we have the following lemma:
\begin{lemma}\label{lem-T-A1}
For any $\e>0$ small there is a $j_\e$ such that for any $j\geq j_\e$ the Lipschitz map
$F_{\infty,j}$ maps $B_{7s_j}(y,g_x)$ into $B_{7+\e}(o,g_{\bar{\be}})$ satisfying that
\begin{enumerate}
\item Its image contains $B_{7-\e}(o,g_{\bar{\be}}).$
\item $F_{\infty,j}\cap B_{7s_j}(y,g_x)$ is a local divisor $D^j\in T_{8,\e}.$
\item For any $\de>0$ there is a $\nu=\nu(\de)$ such that $F_{\infty,j}^{-1}(T_{6,\nu})\subset T_\de(\Sc_x)\cap B_{7s_j}(y,g_x).$
\end{enumerate}
\end{lemma}
\begin{proof}
We only need to show (3). If  not true, then $F_{\infty,j}^{-1}(D^j\cap B_{6.5}(o,g_{\bar{\be}}))$ has at least two distinct components: one lies in $\Sc_x$ and the other does not. Thus for sufficiently large $i,j$ the preimage
$F_{i,j}^{-1}(F_{i,j}(D^j)\cap B_{6.5}(o,g_{\bar{\be}}))$ has at least two components. On the other hand for sufficiently large $i,j$ restricted on $B_{10s_jr_i}(x_i,\dr_i)\setminus T_\de(D),$ $F_{i,j}$ is biholomorphic onto its image. By \eqref{eq:T-A12} and \eqref{eq:T-A13} $F_{i,j}$ is very close to a coordinate map, i.e., almost separating points
on $B_{8}(o,g_{\bar{\be}}).$ By \eqref{eq:T-A10} $B_{10s_jr_i}(x_i,\dr_i)$ lies in some $\C^{N'_{i,j}}$ thus $F_{i,j}$ is one-to-one on $B_{7s_jr_i}(x_i,\dr_i)$ which leads to a contradiction.
\end{proof}

Next, for sufficiently large $i,j$ there are uniformly bounded functions $\varphi_{i,j}$ on
$B_{8s_jr_i}(x_i,\dr_i)$ satisfying
\begin{equation}\label{eq:T-A17}
(s_jr_i)^{-2}\dr_i=\ddb\varphi_{i,j}.
\end{equation}
The reason is that by the construction $S^0_{i,j}\in K_M^{-k_jl_i}$ is perturbed from the constant $\al_0,$ thus $||S^0_{i,j}||_i$ is close to a uniform constant for sufficiently large $i,j$ and note that $-\ddb\log||S^0_{i,j}||^2_i=k_jl_i\dr_i=(s_jr_i)^{-2}\dr_i.$ Moreover by this observation the volume of $D\cap B_{7s_jr_i}(x_i,\dr_i)$ with respect to
$(s_jr_i)^{-2}\dr_i$ is uniformly bounded. It follows from \eqref{eq:T-A15}, \eqref{eq:T-A17} and basic fact in pluripotential theory that
$$\int_{D\cap B_{7s_jr_i}(x_i,\dr_i)}\dr_i^{n-1}=\int_{D\cap B_{7s_jr_i}(x_i,\dr_i)}(s_jr_i)^{2n-2}(\ddb\varphi_{i,j})^{n-1}\leq C(s_jr_i)^{2n-2}.$$
As $i\to\infty$ we have
\begin{equation}\label{eq:T-A18}
\mathcal{H}_{2n-2}(\Sc_x\cap B_{7s_j}(y,\dr_x))\leq C s_j^{2n-2}.
\end{equation}
Then by a covering argument it follows that for any $R>0$ there is a constant $C_R$ such that
\begin{equation}\label{eq:T-A19}
\mathcal{H}_{2n-2}(\Sc_x\cap B_R(o,\dr_x))\leq C_R.
\end{equation}
We also need the following key lemma:
\begin{lemma}\label{lem-T-A2}
All the notations follows from above and assume that
\begin{enumerate}
\item $\xi:\R\mapsto[0,1]$ is a smooth function with $\xi(t)=1$ for any $t\geq 8\e,$ and
\item $f$ is a holomorphic function on $F_{\infty,j}(B_{7s_j}(y,\dr_x))$ such that $|f(z',z_n)|\geq|z_n|$ whenever $|z_n|\geq 8\e.$
\end{enumerate}
Then there is a uniform constant $C$ such that
$$s_{j}^{2-2n}\int_{B_{6s_j}(y,\dr_x)}|\nabla(h\cdot F_{\infty,j})|^2_{\dr_x}\dr_x^n\leq C\int_{F_{\infty,j}(B_{7s_j}(y,\dr_x))}\sqrt{-1}\partial h\wedge\dbar h\wedge(dz'\wedge d\bar{z}')^{n-1},$$
where $h(z',z_n)=\xi(|f|^2(z',z_n)).$
\end{lemma}
\begin{proof}
It suffices to prove this inequality for all $F_{i,j}$ and then let $i\to\infty.$ Let $\tilde{\eta}:\R\mapsto\R$ be a cutoff function such that $\tilde{\eta}(t)=1$ for $t\leq 40,\tilde{\eta}(t)=0$ for $t>46,|\tilde{\eta}'|\leq 1$ and $|\tilde{\eta}''|\leq 2;$ then
we have $\ddb\tilde{\eta}(|z'|^2)\leq 200n dz'\wedge d\bar{z}'.$

By assumptions (1) and (2) we can see that $\tilde{\eta}(|z'|^2)|dh|^2$ vanishes near the boundary of $F_{i,j}(B_{7s_jr_i}(x_i,\dr_i)).$ It is easy to check $\partial h\wedge\partial\dbar h=0.$ It follows from those facts, \eqref{eq:T-A17} and integration by parts that
\begin{align*}
(s_jr_i)^{2-2n}&\int_{B_{7s_jr_i}(x_i,\dr_i)}\tilde{\eta}(|z'|^2)|\nabla(h\cdot F_{i,j})|^2_{\dr_i}\dr_i^n\\
=n&\int_{F_{i,j}(B_{7s_jr_i}(x_i,\dr_i))}\tilde{\eta}(|z'|^2)|\sqrt{-1}\partial h\wedge\dbar h\wedge(\ddb(\varphi_{i,j}\cdot F_{i,j}^{-1}))^{n-1}\\
\leq C&\int_{F_{i,j}(B_{7s_jr_i}(x_i,\dr_i))}\sqrt{-1}\partial h\wedge\dbar h
\wedge(dz'\wedge d\bar{z}')^{n-1}.
\end{align*}
Then the lemma follows by letting $i\to\infty.$
\end{proof}

To complete the whole construction of the cutoff function in the lemma \ref{lem-T5.8}, let $\bar{\e}$ be given. Fix a small $\e_0>0.$ Since
$\bar{\Sc}_x$  has complex codimension at least 2, we can find a finite cover of $\bar{\Sc}_x\cap B_{\bar{\e}^{-1}}(x,g_x)$ by balls $B_{r_a}(y_a,g_x)\; (a=1,\cdots,l)$ satisfying:
\begin{enumerate}
\item [(i)] $y_a\in\bar{\Sc}_x$ and $2r_a\leq\e_0.$
\item [(ii)] $B_{r_a/2}(y_a,g_x)$ are mutually disjoint.
\item [(iii)]$\sum_a r_a^{2n-3}\leq 1.$
\item [(iv)] The number of overlapping balls $B_{2r_a}(y_a,g_x)$ is uniformly bounded.
\end{enumerate}
Denote $\bar{\eta}$ as a cutoff function $\R\mapsto\R$ satisfying $0\leq\bar{\eta}\leq 1,|\bar{\eta}'|\leq 2,$ and moreover $\bar{\eta}(t)=1$ for $t>1.6$ and $\bar{\eta}(t)=0$ for $t\leq 1.1.$ Set $\chi=\prod_a\chi_a$ where $\chi_a(y)=\bar{\eta}\left(\frac{d(y,y_a)}{r_a}\right)$ if $y\in B_{2r_a}(y_a,g_x)$ and $\chi_a(y)=1$ otherwise. Then $\chi$ vanishes on the closure of $B=\cup_a B_{r_a}(y_a,g_x)$ which contains $\bar{\Sc}_x\cap B_{\bar{\e}^{-1}}(x,g_x)$. Furthermore $\chi$ satisfies
\begin{equation}\label{eq:T-A20}
\int_{\Cc_x}|\nabla\chi|^2\dr_x^n\leq C\sum_a\int_{B_{2r_a}(y_a,g_x)}|\nabla\chi_a|^2\dr_x^n
\leq C\sum_a r_a^{2n-2}\leq C\frac{\e_0}{2}\sum_a r_a^{2n-3}\leq C\e_0.
\end{equation}

There is a finite cover of $\Sc_x\cap B_{\bar{\e}^{-1}}(x,g_x)\setminus B$ by balls $B_{6s_b}(y_b,g_x)$ for which Lemma \ref{lem-T-A1} holds $(b=1,\cdots,N).$ We can also
assume that the number of overlapping balls $B_{6s_b}(y_b,g_x)$ is bounded by a uniform
constant $K.$ Choose smooth functions $\zeta_b$ associated to the cover $\{B_{6s_b}(y_b,g_x)\}$
satisfying
\begin{enumerate}
\item $0\leq\zeta_b\leq 1,\;|\nabla\zeta_b|\leq s_b^{-1}.$
\item $\textmd{supp}(\zeta_b)$ is contained in $\{B_{6s_b}(y_b,g_x)\}.$
\item $\sum_b\zeta_b\equiv 1$ near $\Sc_x\cap B_{\bar{\e}^{-1}}(x,g_x)\setminus B.$
\end{enumerate}
Then $\{\zeta_b\},1-\sum_b\zeta_b$ form a partition of unit for the cover $\{B_{6s_b}(y_b,g_x)\}$ and $B_{\bar{\e}^{-1}}(x,g_x).$

As the proof in the simplest case in the beginning, we denote by $\eta$ a cutoff function
$\R\mapsto\R$ satisfying $0\leq\eta\leq 1,\;|\eta'(t)|\leq 1,$ and
$$\eta(t)=0\;\textmd{for}\;t>\log(-\log\bar{\de}^3)\quad\textmd{and}\quad\eta(t)=1\;
\textmd{for}\;t<\log(-\log\bar{\de}).$$
For each $b$ let $F_b$ be the map from $B_{7s_b}(y_b,g_x)$ into $B_{7+\e}(o,g_{\bar{\be}_b})$
and $D_b\subset B_{7+\e}(o,g_{\bar{\be}_b})$ be the divisor given by Lemma \ref{lem-T-A1}.
Let $\nu$ be in (3) of Lemma \ref{lem-T-A1} for $\de=\bar{\e}.$ It is easy to see that $\nu$
can be chosen independent of $b.$ Let $10\e<\nu$ and $f_b$ be a local defining function of $D_b$ satisfying (2) in Lemma \ref{lem-T-A2}. We define a function $\gamma_{\bar{\e},b}$ on
$B_{7s_b}(y_b,g_x)$ as follows: If $|f_b|(F_b(y))\geq\bar{\e}/3,$ put $\gamma_{\bar{\e},b}(y)=1$ and if $|f_b|(F_b(y))\leq\bar{\e},$ put
\begin{equation}\label{eq:T-A21}
\gamma_{\bar{\e},b}(y)=\eta\left(\log\left(-\log\left(\frac{|f_b|(F_b(y))}
{\bar{\e}}\right)\right)\right).
\end{equation}
Choose $\bar{\de}$ sufficiently small we can deduce from the beginning of
proof that
\begin{equation}\label{eq:T-A22}
\int_{B_{6s_b}(y_b,g_x)}|\nabla\gamma_{\bar{\e},b}|^2\dr_x^n\leq\e_0 s_b^{2n-2}.
\end{equation}
Moreover if $\bar{\e}\bar{\de}<\nu$ then by (3) of Lemma \ref{lem-T-A1} $\gamma_{\bar{\e},b}(y)=1$ if $d(y,\Sc_x)\geq\bar{\e}.$

Now combine all the constructions above and define
\begin{equation}\label{eq:T-A23}
\gamma_{\bar{\e}}(y)=\chi(y)(1-\sum_b\zeta_b(y)+\sum_b\zeta_b(y)
\gamma_{\bar{\e},b}(y)).
\end{equation}
Then $\gamma_{\bar{\e}}(y)=1$ whenever $d(y,\Sc_x)\geq\bar{\e}$ and vanishes
in a neighborhood of $\Sc_x.$ It follows from \eqref{eq:T-A23} and \eqref{eq:T-A20}
that
$$\int_{B_{\bar{\e}^{-1}}(x,g_x)}|\gamma_{\bar{\e}}|^2\dr_x^n\leq C
\left(\e_0+K\sum_b\int_{B_{6s_b}(y_b,g_x)}|\nabla(\zeta_b
(1-\gamma_{\bar{\e},b}))|^2\dr_x^n\right).$$
Assume that $\e\leq\e_0,$ by \eqref{eq:T-A22} and \eqref{eq:T-A16} we have
$$\int_{B_{6s_b}(y_b,g_x)}|\nabla(\zeta_b
(1-\gamma_{\bar{\e},b}))|^2\dr_x^n\leq 4\e_0 s_b^{2n-2}\leq 4\e_0\mathcal{H}_{2n-2}(\Sc_x\cap B_{6s_j}(y_b,g_x)).$$
Combine the estimates above and \eqref{eq:T-A19} it follows that
$$\int_{B_{\bar{\e}^{-1}}(x,g_x)}|\gamma_{\bar{\e}}|^2\dr_x^n\leq
C(1+4K^2\mathcal{H}_{2n-2}(\Sc_x\cap B_{\bar{\e}^{-1}}(o,g_x)))\e_0
\leq C(1+4K^2 C_{\bar{\e}^{-1}})\e_0.$$
The proof of Lemma \ref{lem-T5.8} is complete.
\end{proof}

Now we continue the proof of the partial $C^0$-estimate \eqref{eq:bergman-local}. Define a
cut-off function $\eta$ satisfying
$$\eta(t)=1\;\textmd{for}\;t\leq 1,\quad\eta(t)=0\;\textmd{for}\;t\geq 2,\quad\textmd{and}\;
|\eta'(t)|\leq 1.$$
Let $\de_0>0$ be determined later. Choose $\bar{\e}$ such that $\gamma_{\bar{\e}}=1$ on $V(x;\de).$ Then we choose $\e$ such that $\de_0>4\e$ and $V(x;\e)$ contains the support of $\gamma_{\bar{e}}.$ Now for any $y\in V(x;\e),$ define
\begin{equation}\label{eq:almost holo sec1}
\hat{\tau}(\phi(y))=\eta(2\e(\rho_x(y)+\rho_x(y)^{-1}))\gamma_{\bar{\e}}(y)\tau(\phi(y)).
\end{equation}
It follows from the constructions above that
\begin{equation}\label{eq:almost holo sec2}
\hat{\tau}=\tau\quad\textmd{on}\quad\phi(V(x;\de_0)).
\end{equation}
Moreover it follows from \eqref{eq:almost holo sec} and the fact  $(M_\infty,r_j^{-2}\dr_\infty,x)$ converges to $(\Cc_x,\dr_x,o)$ that
\begin{equation}\label{eq:almost holo sec3}
\int_{M_\infty}||\dbar\hat{\tau}||^2_\infty\dr_\infty^n\leq\nu r^{2n-2},
\end{equation}
where $r=r_j$ and $\nu=\nu(\de,\e)$ which could be sufficiently small as long as $\de,\e,\bar{\e}$ are small.
Moreover we also have
\begin{equation}\label{eq:almost holo sec4}
\int_{M_\infty}||\hat{\tau}||^2_\infty\dr_\infty^n\leq
r^{2n}\int_{\phi(V(x;\e))}e^{-\rho_x^2}\dr_\infty^n<C_0 r^{2n}.
\end{equation}
Now we can see that $\hat{\tau}$ is supported outside the singular set $\Sc$ of $M_\infty.$ Meanwhile we need to modify $\hat{\tau}$ to be supported outside $D_\infty.$ If $T<1$
we know that $D_\infty\subset\Sc$ then we just set $\tilde{\tau}=\hat{\tau}.$ If $T=1$
we could put $\rho=||\sigma_\infty||_\infty$ which was constructed in the last section. Similarly
let $\bar{\eta}$ be a cutoff function satisfying $0\leq\bar{\eta}\leq 1,|\bar{\eta}'|\leq 1$
and $$\bar{\eta}(t)=0\;\textmd{for}\;t>\log(-\log\hat{\e}^2)\quad\textmd{and}\quad\bar{\eta}(t)=1\;
\textmd{for}\;t<\log(-\log\hat{\e}).$$
Define $$\tilde{\tau}(z)=\bar{\eta}(\log(-\log\rho(z)))\hat{\tau}(z),$$ then $\tilde{\tau}$ supports away from $\Sc\cup D_\infty$ and coincides with $\hat{\tau}$ wherever $\rho\geq\hat{\e}.$ When $\hat{\e}$ is small enough, it follows from \eqref{eq:almost holo sec3} that
\begin{equation}\label{eq:almost holo sec5}
\int_{M_\infty}||\dbar\tilde{\tau}||^2_\infty\dr_\infty^n\leq 2\nu r^{2n-2}.
\end{equation}
Now set $U(x;\e)$ to be $\phi(V(x;\e))$ if $T<1$ and $\phi(V(x;\e))\setminus\{z|d_\infty(z,D_\infty)\leq\e\}$ if $T=1.$ If $\hat{\e}$ is sufficiently small $\tilde{\tau}=\tau$ on $U(x;\de_0)$ and the support of $\tilde{\tau}$ is contained in $U(x;\e)$ if $\e$ is small enough.

Recall that $(M\setminus D,\dr_i)$ converge to $(M_\infty\setminus\Sc\cup D_\infty,\dr_\infty)$
and the Hermitian metric $H_i$ defined on $K_{M}^{-l}$ converge to $H_\infty$ on $M_\infty\setminus\Sc\cup D_\infty$ in the $C^{\infty}$-topology. Thus for $\de_i\to 0$ we could define diffeomorphisms
$$\tilde{\phi}_i:M_\infty\setminus T_i(\Sc\cup D_\infty)\mapsto M\setminus T_i(D)$$
associated with smooth isomorphisms $$F_i:K_{M_\infty}^{-l}\mapsto K_{M}^{-l}$$ over $M_\infty\setminus T_i(\Sc\cup D_\infty),$ where $T_i(\Sc\cup D_\infty),T_i(D)$ are
$\de_i$-tubular neighborhood of $\Sc\cup D_\infty$ and $D$ with respect to $\dr_i,\dr_\infty.$
Moreover, we have
\begin{enumerate}
\item [($C_1$)] $\tilde{\phi}_i(M_\infty\setminus T_i(\Sc\cup D_\infty))\subset M\setminus T_i(D).$
\item [($C_2$)] $\pi_i\circ F_i=\tilde{\phi}_i\circ\pi_\infty$ where $\pi_i,\pi_\infty$ are
corresponding projections.
\item [($C_3$)] $||\tilde{\phi}_i^*\dr_i-\dr_\infty||_C^2(M_\infty\setminus T_i(\Sc\cup D_\infty))\leq\de_i.$
\item [($C_4$)] $||F_i^*H_i-H_\infty||_C^4(M_\infty\setminus T_i(\Sc\cup D_\infty))\leq\de_i.$
\end{enumerate}
We set large enough $i$ such that $U(x;\e)\subset M_\infty\setminus T_i(\Sc\cup D_\infty).$ Set
$\tilde{\tau}_i=F_i(\tilde{\tau})$ then it follows that $\tilde{\tau}_i=F_i(\tau)$ on $\tilde{\phi}_i(U(x;\de_0))$ and moreover it follows from \eqref{eq:almost holo sec5} that
\begin{equation}\label{eq:almost holo sec6}
\int_{M}||\dbar\tilde{\tau}_i||^2_i\dr_i^n\leq 3\nu r^{2n-2}.
\end{equation}
By the $L^2$-estimate in Lemma \ref{lem-L2}, there exists a section $v_i$ of $K_M^{-l}$ such that
$\dbar v_i=\dbar\tilde{\tau}_i$ and
$$\int_{M}||\dbar v_i||^2_i\dr_i^n\leq\frac{1}{l}\int_{M}||\dbar\tilde{\tau}_i||^2_i\dr_i^n\leq 3\nu r^{2n}.$$
Take $\sigma_i=\tilde{\tau}_i-v_i,$ which is a holomorphic section of $K_M^{-l}.$ Then it follows
from above and \eqref{eq:almost holo sec4} that
\begin{equation}\label{eq:holo sec1}
\int_{M}||\sigma_i||^2_i\dr_i^n\leq 2C_0 r^{2n}\leq C,
\end{equation}
where $C$ is independent of $i.$ As $\tilde{\tau}_i=F_i(\tau)$ on $\tilde{\phi}_i(U(x;\de_0)),$
and $F_i$ is almost holomorphic by the condition ($C_2$), it follows from \eqref{eq:almost holo sec} that $||\dbar v_i||\leq c\de.$ Then the standard elliptic estimate implies that
\begin{equation}\label{eq:almost holo sec7}
\sup_{\tilde{\phi}_i(U(x;2\de_0)\cap\phi(B_1(o,g_x)))}||v_i||_i^2\leq
C(\de_0 r)^{-2n}\int_{M}||v_i||^2_i\dr_i^n\leq C\de_0^{-2n}\nu.
\end{equation}
As $C$ is uniform, let $\de,\e$ be small enough such that $\nu=\nu(\de,\e)$ satisfies that $20C\nu\leq\de_0^{2n}.$ Then it follows from \eqref{eq:almost holo sec} and \eqref{eq:almost holo sec7} that
\begin{equation}\label{eq:holo sec2}
||\sigma_i||_i\geq||F_i(\tau)||_i-||v_i||_i>e^{-\frac{1}{2}}-\frac{1}{4}>\frac{1}{3}
\quad\textit{on}\;\tilde{\phi}_i(U(x;2\de_0)\cap\phi(B_1(o,g_x))).
\end{equation}
On the other hand by the derivative estimate of holomorphic sections in Lemma \ref{lem-T4.2}
and \eqref{eq:holo sec1} we have
\begin{equation}\label{eq:holo sec3}
\sup_M||\nabla\sigma_i||_i\leq C'l^{\frac{n+1}{2}}\left(\int_{M}||\sigma_i||^2_i\dr_i^n\right)^{\frac{1}{2}}\leq
C''r^{-1}.
\end{equation}
By the choice of $\phi,$ if $\e$ is much smaller than $\de_0$ then for some $u\in\partial B_1(o,g_x),$ we have
$$d_\infty(x,\phi(\de_0 u))\leq d_\infty(x,\phi(\e u))+d_\infty(\phi(\e u),\phi(\de_0 u))\leq 2\de_0 r,$$ where $\phi(\e u)\in\partial U(x;\e).$ Then for sufficiently large $i$
it follows that $$d_i(x_i,\tilde{\phi}_i(\phi(\de_0 u)))\leq 5\de_0 r.$$
Then it follows from \eqref{eq:holo sec2} and \eqref{eq:holo sec3} that
$$||\sigma_i||_i(x_i)\geq\frac{1}{3}-5C''\de_0.$$
Thus we can choose $\de_0\leq\frac{1}{30C''}$ then $||\sigma_i||_i(x_i)\geq\frac{1}{6}$. Combining the fact \eqref{eq:holo sec1}, we can see that \eqref{eq:bergman-local} holds, and consequently,
Theorem \ref{thm-main} is completed.

Also by the partial $C^0$-estimate,
we have the following structure theorem as Theorem 5.9 of \cite{Ti15}:
\begin{theorem}\label{thm-T5.9}
The Gromov-Hausdorff limit $M_\infty$ is a normal variety embedded in some $\CP^{N}$ whose singular set
is a subvariety $\overline{\Sc}$ of complex codimension at least 2. If $T<1$ $\Sc$ is a subvariety consisting
of a divisor $D_\infty$ and a subvariety $\overline{\Sc}$ of complex codimension at least 2. If
$T=1,\Sc=\overline{\Sc}.$ Moreover $D_\infty$ is the limit of $D$ under the Gromov-Hausdorff convergence.
\end{theorem}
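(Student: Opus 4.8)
The plan is to follow the proof of Theorem 5.9 in \cite{Ti15}, inserting the bookkeeping for the divisor $D_\infty$ produced in Section 3 and for the twisting term $(1-T)b_0\al_0$. Fix $l=l_i$ large in the sequence of Theorem \ref{thm-main}. By Lemma \ref{lem-T5.3} and the smooth convergence of Theorem \ref{thm-smooth}, an $L^2$-orthonormal basis $\{\tau_0,\dots,\tau_{N_l}\}$ of $H^0(M,K_M^{-l})$ with respect to $(H_i,\dr_i)$ subconverges to an $L^2$-orthonormal set of holomorphic sections $\{\sigma_0,\dots,\sigma_{N_l}\}$ of $K_{M_\infty}^{-l}$ over $M_\infty\setminus\Sc$; by Lemma \ref{lem-T5.2} each $\sigma_a$ is bounded in $H_\infty$, and since $\overline{\Sc}$ has complex codimension $\ge 2$ while along the conical locus $D_\infty$ the complex structure of $M_\infty$ and the bundle $K_{M_\infty}^{-l}$ are smooth, Riemann's removable singularity theorem lets each $\sigma_a$ extend to a global section, yielding a basis of $H^0(M_\infty,K_{M_\infty}^{-l})$. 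Passing Theorem \ref{thm-main} to the limit along the smooth convergence gives $\rho_{\dr_\infty,l}=\sum_a H_\infty(\sigma_a,\sigma_a)\ge c_l>0$ on $M_\infty\setminus\overline{\Sc}$, so the anticanonical map $\Phi_l(z)=[\sigma_0(z):\cdots:\sigma_{N_l}(z)]$ is a well-defined holomorphic map $M_\infty\setminus\overline{\Sc}\to\CP^{N_l}$.

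Next I would show $\Phi_l$ is a holomorphic embedding of $M_\infty\setminus\overline{\Sc}$ once $l$ is large. Injectivity: the peak-section construction carried out in the proof of Theorem \ref{thm-main}, applied at two distinct points $p\ne q$, produces for $l$ large holomorphic sections of unit $L^2$-norm concentrated near $p$ and near $q$ respectively (using the $e^{-\rho_x^2}$-decay of the model section), so $\Phi_l(p)\ne\Phi_l(q)$; a covering argument upgrades this to injectivity on all of $M_\infty\setminus\overline{\Sc}$ for a fixed large $l$. Immersion: near a generic regular point $y$, the local coordinate construction of Lemma \ref{lem-T5.7} together with the model sections $f_k=\al_k z_k$ perturbed as in \eqref{eq:T-A8}--\eqref{eq:T-A9} shows $\Phi_l$ is $C^1$-close to a linear coordinate chart, hence $d\Phi_l$ is injective there; by density and holomorphy it is injective on all of $M_\infty\setminus\overline{\Sc}$, including $D_\infty$. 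Thus $W_l:=\Phi_l(M_\infty\setminus\overline{\Sc})$ is a connected locally closed $n$-dimensional complex submanifold of $\CP^{N_l}$. The gradient bound of Lemma \ref{lem-T4.2} together with $\rho_{\dr_\infty,l}\ge c_l$ gives $\Phi_l^*\dr_{FS}\le C(l)\dr_\infty$, hence $\mathrm{Vol}(W_l)=\int_{M_\infty}(\Phi_l^*\dr_{FS})^n\le C(l)^n\int_{M_\infty}\dr_\infty^n<\infty$, and moreover $\Phi_l$ is $C(l)$-Lipschitz, so it extends to a Lipschitz map $\Phi_l:M_\infty\to\CP^{N_l}$ with $\mathcal{H}_{2n-2}(\Phi_l(\overline{\Sc}))=0$. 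By Bishop's extension theorem $V_\infty:=\overline{W_l}$ is an irreducible $n$-dimensional projective subvariety of $\CP^{N_l}$, smooth along $W_l$, so $\mathrm{Sing}(V_\infty)\subset\Phi_l(\overline{\Sc})$ has complex codimension $\ge 2$.

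Finally, the extended $\Phi_l:M_\infty\to V_\infty$ is a continuous surjection of compact Hausdorff spaces, bijective on the dense set $M_\infty\setminus\overline{\Sc}$, and injective at each point of $\overline{\Sc}$ by the same peak-section construction (valid at every point of $M_\infty$ thanks to the partial $C^0$-estimate); hence $\Phi_l$ is a homeomorphism, biholomorphic from $M_\infty\setminus\overline{\Sc}$ onto $V_\infty\setminus\mathrm{Sing}(V_\infty)$. Normality of $V_\infty$ follows as in \cite{Ti15}: using the tangent cone structure of Theorem \ref{thm-tangent cone} and the partial $C^0$-estimate applied on the cones, the germ of $M_\infty$ at a point of $\overline{\Sc}$ is identified with the germ of a normal affine variety, so bounded holomorphic functions extend across $\overline{\Sc}$ and $M_\infty\cong V_\infty$ is normal. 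For the last statements: when $T<1$ the limit $\sigma_\infty\in H^0(M_\infty,q(1-b_0)K_{M_\infty}^{-1})$ of $S$ from Section 3 has zero divisor $D_\infty$, so the metric singular set decomposes as $\Sc=D_\infty\cup\overline{\Sc}$ with $D_\infty$ a divisor and $\overline{\Sc}$ of complex codimension $\ge 2$; when $T=1$ we have $\Sc=\overline{\Sc}$ and $D_\infty=\{\sigma_\infty=0\}$ is a divisor of $K_{M_\infty}^{-l}$; in both cases $D\to D_\infty$ in the Gromov--Hausdorff sense by Theorem \ref{thm-smooth}.

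The hard part will be (i) verifying that the embedding and immersion estimates survive uniformly up to and across the conical divisor $D_\infty$, where $\dr_\infty$ degenerates but the complex structure does not, and (ii) the normality assertion, whose proof requires pinning down the local complex-analytic model at points of $\overline{\Sc}$ through Theorem \ref{thm-tangent cone} and the Cheeger--Colding/Donaldson--Sun analysis of tangent cones; this is the genuinely substantial input beyond the partial $C^0$-estimate itself.
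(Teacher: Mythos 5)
Your proposal is correct in outline and follows essentially the same route as the paper, which itself gives no independent argument for Theorem \ref{thm-T5.9} but invokes the partial $C^0$-estimate and defers to Theorem 5.9 of \cite{Ti15}; your sketch is exactly that Tian/Donaldson--Sun embedding-plus-Bishop-plus-normality argument, with the appropriate bookkeeping for $D_\infty$ and the two cases $T<1$, $T=1$ taken from Theorems \ref{thm-smooth} and \ref{thm-tangent cone}. The substantial points you flag at the end (uniformity of the embedding estimates across $D_\infty$ and the normality of the limit) are indeed the parts carried by the cited references rather than by anything new in this paper.
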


\section{Reductivity of the automorphism group of the limit space}

In this section we will follow Lemma 6.9 in \cite{Ti15} to prove Corollary \ref{cor-reductivity}. As $t_i\to T,$ by taking a subsequence we may assume $(M,D,\dr_i)$ converge to $(M_\infty,D_\infty,\dr_\infty).$ By the partial $C^0$-estimate in Theorem \ref{thm-main}, similar to \cite{Ti15}, we have
\begin{enumerate}
\item [(1)]$M$ is embedded in $\CP^{N}$ through an orthonormal basis of $H^0(M,K_M^{-l})$ given by $H_i=H_{\dr_i}.$
\item [(2)]$M_\infty\subset\CP^{N}$ is a normal subvariety with a divisor $D_\infty.$
\item [(3)]There are $\sigma_i\in G=SL(N+1,\C)$ such that $(\sigma_i(M),\sigma_i(D))$ converge to $(M_\infty,D_\infty).$
\end{enumerate}
It follows that the stabilizer $G_\infty$ of $(\sigma_i(M),\sigma_i(D))$ in $G$ contains a nontrivial
holomorphic subgroup. We want to show the Lie algebra $\eta_\infty$ of $G_\infty$ is reductive. For this
target, we also need two steps. First we show that any holomorphic field in $\eta_\infty$ is a complexification of a Killing field on $M_\infty.$ Then we can show that any Killing field can be
extended to be the imaginary part of a holomorphic field on $\CP^{N}.$

As \cite{Ti15}, let $X$ be a holomorphic vector field on $\CP^{N}$ which is tangent to $M_\infty.$ We need
to show that there is a bounded function $\theta_\infty$ such that $i_X\dr_\infty=\sqrt{-1}\dbar\theta_\infty$
on $M_\infty\setminus\overline{\Sc}\cup D_\infty.$ Let $\phi_s$ be an one-parameter subgroup of automorphisms
generated by $Y=Re X$ or $Im X$ then we have
$$\phi_s^*\dr_\infty=\frac{1}{l}\dr_{FS}+\ddb\psi_s.$$ Let $\phi_s$ act on \eqref{eq:path} and note that $\dr_i$ convergent to $\dr_\infty$ smoothly on $M_\infty\setminus\overline{\Sc}\cup D_\infty,$ it follows that
$$Ric(\phi^*_s\dr_\infty)=T\phi^*_s\dr_\infty+(1-T)b_0\al_0$$
on $M_\infty\setminus\overline{\Sc}\cup D_\infty.$ Compare the equations in case of $s$ and 0, and suitably choose $\psi_s$ it follows that
\begin{equation}\label{eq:ma-reg}
\phi^*_s\dr_\infty^n=(\dr_\infty+\ddb\xi_s)^{n}=e^{-T\xi_s}\dr_\infty^n\;\textmd{on}\;M_\infty\setminus
\overline{\Sc}\cup D_\infty,
\end{equation}
where $\xi_s=\psi_s-\psi_0.$ As \cite{Ti15}, first we can show that $\psi_s$ or $\xi_s$ are bounded and continuous functions by the partial $C^0$-estimate. To see the continuity, we note that
\begin{align*}
\dr_\infty&=\frac{1}{l}\dr_{FS}+\ddb\psi_0,\\
\phi^*_s\dr_\infty&=\frac{1}{l}\phi^*_s\dr_{FS}+\ddb\psi_0\circ\phi_s
=\frac{1}{l}\dr_{FS}+\ddb\psi_s.
\end{align*}
Thus we have
$$\xi_s=\psi_s-\psi_0=\psi_0\circ\phi_s-\psi_0+\zeta_s,$$
where $\zeta_s$ is smooth on $\CP^N$ and satisfies
$$\phi^*_s\dr_{FS}=\dr_{FS}+l\ddb\zeta_s,\quad\zeta_0=0.$$
By the partial $C^0$-estimate a subsequence of $\{\log\rho_{i,l}\}$ converges to $l\psi_0+c$ for some constant $c$ as $(M,\dr_i)$ converge to $(M_\infty,\dr_\infty).$
This follows from the the definition of $\rho_{i,l}$ in \eqref{eq:bergman} and the fact
$\ddb\rho_{i,l}=\dr_{FS}-l\dr_i.$ By the gradient estimate in Lemma \ref{lem-T4.2}
$\rho_{i,l}$ are uniformly continuous for each fixed $l.$ Moreover $\rho_{i,l}$
are uniformly bounded by a positive constant, it follows that $\psi_0$ is continuous and thus $\xi_s$ is continuous. We also see that $|\xi_s|\leq\frac{1}{2}$ for sufficiently small $s.$

It follows from \eqref{eq:ma-reg} that
\begin{equation}\label{eq:ma-reg-diff}
-\ddb\xi_s\wedge\left(\sum_{i=0}^{n-1}\dr_\infty^i\wedge
\phi^*_s\dr_\infty^{n-i-1}\right)=(1-e^{-T\xi_s})\dr_\infty^n.
\end{equation}

Recall that as $(M,\dr_i)$ converge to $(M_\infty,\dr_\infty),$ the limit of $D\subset(M,\dr_i)$ converge to a divisor $D_\infty\subset M_\infty$ modulo a singular set $\bar{\Sc}$ with complex codimension at least 2. By section 3 we have a holomorphic section $\tau_\infty$ whose zero set contains $\bar{\Sc}\cup D_\infty.$ In particular
$M_\infty\setminus\tau_\infty^{-1}(0)$ is contained in the regular part of $(M_\infty,\dr_\infty).$ Thus we can choose a cutoff function $\tilde{\eta}:\R\mapsto\R$ satisfying $\tilde{\eta}(t)=1$ for $t\geq 2,
\tilde{\eta}(t)=0$ for $t\leq 1,|\tilde{\eta}'(t)|\leq 1,$ and $|\tilde{\eta}''(t)|\leq 4.$ For any $\e>0,$ we can define $\gamma_\e(x)=\tilde{\eta}(\e\log(-\log||\tau_\infty||_0^2(x))).$

Multiply \eqref{eq:ma-reg-diff} by $\gamma_\e^2\xi_s$ and integrate by parts, we have
\begin{align*}
\int_{M_\infty}\gamma_\e^2\xi_s(1-e^{-T\xi_s})\dr_\infty^n&=\int_{M_\infty}
\sqrt{-1}\partial(\gamma_\e\xi_s)\wedge\dbar\xi_s\wedge
\left(\sum_{i=0}^{n-1}\dr_\infty^i\wedge\phi^*_s\dr_\infty^{n-i-1}\right)\\
&\geq\frac{3}{4}\int_{M_\infty}\sqrt{-1}\gamma_\e^2\partial\xi_s\wedge\dbar\xi_s\wedge
\dr_\infty^{n-1}\\
&-\int_{M_\infty}\sqrt{-1}\xi_s^2\partial\gamma_\e\wedge\dbar\gamma_\e\wedge
\left(\sum_{i=0}^{n-1}\dr_\infty^i\wedge\phi^*_s\dr_\infty^{n-i-1}\right).
\end{align*}
For the last term, recall that $\dr_\infty=\frac{1}{l}\dr_{FS}+\ddb\psi_0$ and
$\phi^*_s\dr_\infty=\frac{1}{l}\dr_{FS}+\ddb\psi_s$ where $\psi_0$ and $\psi_s$
are uniformly bounded functions by the partial $C^0$-estimate. It follows from
similar computations in Lemma \ref{lem-T5.8} combined with standard pluripotential theory (see Lemma 6.10 in \cite{Ti15} for the details) that the last term tends
to 0 as $\e\to 0.$ Thus we have the following inequality if $s$ is so small such that $|\xi_s|\leq\frac{1}{2}$:
\begin{equation}\label{eq:ma-reg-gradient1}
\frac{1}{n}\int_{M_\infty}|\nabla\xi_s|^2\dr_\infty^n\leq 2T\int_{M_\infty}|\xi_s|^2\dr_\infty^n.
\end{equation}
Still use the $\tilde{\eta}$ above we set
$\bar{\gamma}_{\de}(x)=1-\tilde{\eta}(\de^{-1}||\tau_\infty||_0(x)).$ Then
$\bar{\gamma}_{\de}(x)=1$ when $||\tau_\infty||_0(x)\leq\de$ and has its support in the subset $E_\de:=\{x\in M_\infty|||\tau_\infty||_0(x)\leq 2\de\}.$

Recall that $\xi_s=\psi_0\circ\phi_s-\psi_0+\zeta_s$ and note that $\zeta_s$ is defined on $\CP^n$ by $\phi^*_s\dr_{FS}=\dr_{FS}+l\ddb\zeta_s,$ thus $|\zeta_s|\leq C_\zeta s.$
On the other hand, as $\psi_0$ is defined by $\dr_{\infty}=\frac{\dr_{FS}}{l}+l\ddb\psi_0,$ thus $|d\psi_0|\leq C'_\de$ outside $E_{\de/4}$ for some $C'_\de.$ Note that $\phi_0$ is the identity map, for $t$ small it follows that
$$|\xi_s|\leq|\psi_0\circ\phi_s-\psi_0|+|\zeta_s|\leq(C'_\de\sup_{M_\infty\setminus E_{\de/4}}+C_\zeta)s\quad\textmd{on}\;M_\infty\setminus E_{\de/2}.$$ Then we have
\begin{equation}\label{eq:ma-reg-L2}
\int_{M_\infty\setminus E_{\de/2}}|\xi_s|^2\dr_\infty^n\leq C''_\de s^2.
\end{equation}
Combine \eqref{eq:ma-reg-gradient1} and \eqref{eq:ma-reg-L2} it follows that
\begin{align}\label{eq:ma-reg-gradient2}
\int_{M_\infty}|\nabla(\bar{\gamma}_\de\xi_s)|^2\dr_\infty^n&\leq \frac{3}{2}\int_{M_\infty}|\nabla\xi_s|^2\dr_\infty^n+
10\int_{M_\infty}|\nabla\bar{\gamma}_\de|^2|\xi_s|^2\dr_\infty^n\nonumber\\
&\leq 3n\left(\int_{E_{2\de}}|\bar{\gamma}_\de\xi_s|^2\dr_\infty^n+
\int_{M_\infty\setminus E_{\de/2}}|\xi_s|^2\dr_\infty^n\right)+s^2 C'''_\de\nonumber\\
&\leq 3n\int_{M_\infty}|\bar{\gamma}_\de\xi_s|^2\dr_\infty^n+s^2 C_\de.
\end{align}
Next we need the following estimate of the first eigenvalue of $E_\de$ that
$\lambda_1(E_\de)\geq 4n$ for $\de$ sufficiently small where
$$\lambda_1(E):=\left\{\frac{\int_E|\nabla v|^2\dr_\infty^n}{\int_E|v|^2\dr_\infty^n}|0\neq v\in C^1(E\setminus S)\cap L^{\infty}(E), v|_{\partial E}=0\right\}$$ for any open $E\subset M_\infty$ with
nonempty boundary $\partial E\subset M_\infty\setminus\Sc.$ This estimate follows from
the smooth approximation in Theorem \ref{thm-shen}, Croke and P. Li's standard estimates (see the clam in page 47 of \cite{Ti15}). Thus it follows from \eqref{eq:ma-reg-L2}, \eqref{eq:ma-reg-gradient2} and the first eigenvalue estimate above that
\begin{align*}
\int_{M_\infty}|\xi_s|^2\dr_\infty^n&\leq 2\left(\int_{M_\infty}|\bar{\gamma}_\de\xi_s|^2\dr_\infty^n+
\int_{M_\infty}|(1-\bar{\gamma}_\de)\xi_s|^2\dr_\infty^n\right)\\
&\leq \frac{2C_\de}{n}s^2+
2\int_{M_\infty\setminus E_{\de/2}}|\xi_s|^2\dr_\infty^n\leq\bar{C}_\de s^2.
\end{align*}
Combining this with \eqref{eq:ma-reg-gradient1} and dividing by $s^2$ we have
\begin{equation}\label{eq:ma-reg-H2}
\int_{M_\infty}(|s^{-1}\nabla\xi_s|^2+|s^{-1}\xi_s|^2)\dr_\infty^n\leq 2C_\de.
\end{equation}
Now Assume $Y=Re X$ as the generator of $\phi_s.$ Recall that $\xi_s=\psi_0\circ\phi_s-\psi_0+\zeta_s$ and $\psi_0$ is smooth on $M_\infty\setminus\bar{\Sc}\cup D_\infty,$ we can see that $s^{-1}\xi_s$ converge
pointwisely to $u$ on $M_\infty\setminus\bar{\Sc}\cup D_\infty$ as $t\to 0.$ Letting $t\to 0,$ by \eqref{eq:ma-reg-H2} we have that
\begin{equation}\label{eq:ma-reg-dev-L2}
\int_{M_\infty}|u|^2\dr_\infty^n\leq C.
\end{equation}
By differentiating \eqref{eq:ma-reg} it follows that $\int_{M_\infty}u\dr_\infty^n=0.$
For any $q\geq 2$ multiply \eqref{eq:ma-reg-diff} by $\gamma_\e\xi_s|\xi_s|^{q-2}$
and integrate by parts and let $\e\to 0,$ we have
\begin{equation}\label{eq:ma-reg-dev-gradient}
\int_{M_\infty}|\nabla|s^{-1}\xi_s|^{q/2}|^2\dr_\infty^n\leq \frac{12nTq^2}{q-1}\int_{M_\infty}|s^{-1}\xi_s|^{q}\dr_\infty^n.
\end{equation}
By \eqref{eq:ma-reg-dev-L2} and the Sobolev inequality, for any $q\geq 2$ there is a uniform constant $C_q$ satisfying
$$\int_{M_\infty}|s^{-1}\xi_s|^q\dr_\infty^n\leq C_q.$$
Furthermore, give any $\e>0$ it follows from above that
$$\int_{E_\de}|s^{-1}\xi_s|^q\dr_\infty^n\leq Vol(E_\de)^{1/n}
\left(\int_{M_\infty}|s^{-1}\xi_s|^{\frac{qn}{n-1}}\dr_\infty^n\right)
^{\frac{n-1}{n}}\leq Vol(E_\de)^{1/n}(C_{\frac{qn}{n-1}})^{\frac{n-1}{n}}
\leq\frac{\e}{3}$$
for sufficiently small $\de.$ Let $s\to 0$ it follows that
$$\int_{E_\de}|u|^q\dr_\infty^n\leq\frac{\e}{3}.$$
On the other hand as $s^{-1}\xi_s$ converge to $u$
outside $D_\infty\cup\Sc$ for $s$ sufficiently small we have
$$\int_{M_\infty\setminus E_\de}|s^{-1}\xi_s-u|^q\dr_\infty^n\leq\frac{\e}{3}.$$
It follows from the estimates above that $s^{-1}\xi_s$ converge to $u$ in any $L^q$-norm. Then let $s\to 0$ we have a similar inequality to \eqref{eq:ma-reg-dev-gradient} for $u.$ By \eqref{eq:ma-reg-dev-L2} and standard
Moser iteration we can show that $u$ is bounded.

Note that each $\psi_s$ is smooth outside $\overline{\Sc}\cup D_\infty$ and satisfies
$$\frac{1}{l}\dr_{FS}+\ddb\psi_s=\phi^*_s\dr_\infty=\frac{1}{l}\phi^*_s\dr_{FS}+\ddb\phi^*_s\psi_0.$$
It follows that $\psi_s=\phi^*_s\psi_0+\zeta_s$ where $\phi^*_s\dr_{FS}=\dr_{FS}+l\ddb\zeta_s.$ Note that
$\zeta_s$ is a smooth function on $\CP^{N}$ as well as in $s.$ Thus we have
$$u=Y(\psi_0)+\theta_u,\quad \textmd{where}\quad\theta_u=\frac{\partial\zeta_s}{\partial s}|_{s=0}.$$
Similarly by taking $Y$ to be the imaginary part of $X$ we can get a bounded function $v=Y(\psi_0)+\theta_v.$
Now set $\theta_\infty=u+\sqrt{-1}v$ and $\theta=\theta_u+\theta_v,$ then $\theta_\infty=X(\psi_0)+\theta$
is bounded on $M_\infty$ and $i_X\dr_\infty=\sqrt{-1}\dbar\theta_\infty$ since $i_X\dr_{FS}=l\sqrt{-1}\dbar\theta.$ Moreover it holds that
$$\int_{M_\infty}|\nabla\theta_\infty|^2\dr_\infty^n<\infty\quad \textmd{and}\quad\int_{M_\infty}\theta_\infty\dr_\infty^n=0.$$

Next we need to show that $\theta_\infty$ satisfies an eigenfunction equation in a weak sense. Similar
to \cite{Ti15}, by using a test function $\zeta$ and taking the derivative of $s$ to \eqref{eq:ma-reg},
it follows that $$\int_{M_\infty}X(\zeta)\dr_\infty^n=T\int_{M_\infty}\zeta\theta_\infty\dr_\infty^n$$
which implies that in the weak sense,
\begin{equation}\label{eq:eigen}
-\Delta_\infty\theta_\infty=T\theta_\infty\quad on\quad M_\infty.
\end{equation}
By Theorem \ref{thm-shen}, there are smooth \ka\ metrics $\tilde{\dr}_i$ with $Ric(\tilde{\dr}_i)\geq
t_i\tilde{\dr}_i$ and converging to $(M_\infty,\dr_\infty)$ in the Cheeger-Gromov topology, where
$t_i\to T$ as $i\to\infty.$ Similar to \cite{Ti15}, we can show that any bounded function $\theta$
satisfying \eqref{eq:eigen} is the limit of eigenfunctions $\theta_i$ on $M$ which satisfies
$-\Delta_i\theta_i=\lambda_i\theta_i$ with $\lambda_i\to T$ as $t_i\to T,$ where $\Delta_i$ denotes the Laplacian of $\tilde{\dr}_i.$ It follows from Bochner's formula that $\lambda_i\geq t_i,$ and moreover
$$\int_M|\overline{\nabla}\dbar\theta_i|^{2}\tilde{\dr}_i^n=
\int_M((\Delta_i\theta_i)^2-Ric(\partial\theta_i,\partial\theta_i))\tilde{\dr}_i^n
\leq\lambda_i(\lambda_i-t_i)\int_M|\partial\theta_i|^2\tilde{\dr}_i^n.$$
It follows that $\overline{\nabla}\dbar\theta=0$ and $\dbar\theta$ induces a holomorphic
vector field $Z$ outside $\Sc$ of $M_\infty.$ Actually \eqref{eq:eigen} has real solutions which
implies that the imaginary part $Y$ of $Z$ is a Killing field. Thus the Lie algebra $\eta_\infty$
is the complexification of a Lie algebra of Killing fields. Finally by same argument in \cite{Ti15}
$Z$ could be extended to a holomorphic vector field on the ambient $\CP^N,$ which implies that
$\eta_\infty$ is reductive. The proof of Corollary \ref{cor-reductivity} is completed.

\section{\ka-Einstein problem without equivariant actions by holomorphic vector fields}

To show the existence of the \ka-Einstein metric on the Fano manifold with holomorphic vector fields
which do not induce equivariant actions, we can proceed along a continuity path which was first considered
by Yau \cite{Yau} and later developed by a lot of geometers. In the proof of Corollary \ref{cor-YTD} we
choose \eqref{eq:path} as our continuity path:
\begin{equation}\label{eq:path1}
Ric(\dr_t)=t\dr_t+(1-t)(\sum_{r=1}^{m}2\pi b_r[D_r]+b_0\al_0).
\end{equation}
Choose a smooth \ka\ metric $\dr\in c_1(M)$ and suppose $\dr_t=\dr+\ddb\varphi_t,$ we can proceed as following
to transform \eqref{eq:path1} to a family of complex \MA\ equations. As $\dr\in c_1(M),$ there exists a smooth
function $f_1$ such that $$Ric(\dr)=t\dr+(1-t)\dr+\ddb f_1.$$ Next as $2\pi\sum\limits_r b_r[D_r]+b_0\al_0\in c_1(M),$ there exists a smooth function $f_2$ such that
$$\dr=\sum\limits_r b_r\Theta_r+b_0\al_0+\ddb f_2,$$ where $\Theta_r$ represents the curvature of some
Hermitian metric $||\cdot||_r$ defined on the holomorphic line bundle associated to $D_r.$ Combine all the equations above we have
\begin{equation}\label{eq:ma-path}
(\dr+\ddb\varphi_t)^n=\displaystyle\frac{e^{f_1+(1-t)f_2-t\varphi_t}\dr^n}{\prod_r||S_r||_r^{2b_r(1-t)}},
\end{equation}
where $\varphi_t$ satisfies that
$$\displaystyle\int_M\frac{e^{f_1+(1-t)f_2-t\varphi_t}\dr^n}{\prod_r||S_r||_r^{2b_r(1-t)}}=1.$$
Suppose the solvable set $I\subset[0,1]$ is the set of $t$ such that $\eqref{eq:path1}$ or $\eqref{eq:ma-path}$ is solvable. Actually we could choose suitable divisors $D_r,$ smooth $(1,1)$ form $\al_0$ and coefficients $b_r<1$ such that $\eqref{eq:path1}$ is solvable at $t=0,$ see for example \cite{GP}.
Thus $I\neq\emptyset.$ Next, we want to show
\begin{lemma}\label{lem-open}
$I$ is an open set in $[0,1].$
\end{lemma}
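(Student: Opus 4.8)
The plan is to run the implicit function theorem at a point $t_0\in I$, exactly as in the Aubin--Yau continuity method, with the linearized equation analyzed in the weighted Hölder spaces adapted to a conic \ka\ metric along the simple normal crossing divisor $D=\sum_r D_r$. Let $\varphi_{t_0}$ be a solution of \eqref{eq:ma-path} for some $t_0\in I$ with $t_0<1$ (this is the only case relevant to the continuity method), so that $\dr_{t_0}=\dr+\ddb\varphi_{t_0}$ is a genuine conic \ka\ metric with cone angles $2\pi\be_r(t_0)$, $\be_r(t)=1-(1-t)b_r$, along $D_r$. Since the cone angles vary with $t$, I would first fix, as in \cite{GP,JMR} (and as already used in \cite{Sh}), a family of reference conic metrics $\hat{\dr}_t$ depending smoothly on $t$ near $t_0$, with $\hat{\dr}_{t_0}=\dr_{t_0}$ and cone angle $2\pi\be_r(t)$ along $D_r$; writing $\dr_t=\hat{\dr}_t+\ddb u$, equation \eqref{eq:ma-path} becomes a complex \MA\ equation $(\hat{\dr}_t+\ddb u)^n=e^{F_t-tu}\,\hat{\dr}_t^{\,n}$ with $F_t$ smooth, depending smoothly on $t$, and $F_{t_0}\equiv 0$, where $u$ is sought in the Donaldson--Jeffres--Mazzeo--Rubinstein-type space $C^{2,\al}_{\be(t)}$ attached to the cone angles $\be_r(t)$ and the intersection configuration of $D$. (For $t_0>0$ the additive-constant normalization in \eqref{eq:ma-path} is automatic, since $u\mapsto u-c/t$ rescales the right-hand side and the volume identity follows by integrating the equation; the case $t_0=0$ is the classical Aubin--Yau situation and one works modulo constants.)

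Writing $\Phi(u,t)=\log\!\big((\hat{\dr}_t+\ddb u)^n/\hat{\dr}_t^{\,n}\big)-F_t+tu$, so that $\Phi(0,t_0)=0$, the crux is that the linearization
$$D_u\Phi\big|_{(0,t_0)}=\Delta_{\dr_{t_0}}+t_0\;:\;C^{2,\al}_{\be(t_0)}\longrightarrow C^{0,\al}_{\be(t_0)}$$
is an isomorphism, where $\Delta_{\dr_{t_0}}$ is the complex Laplacian of $\dr_{t_0}$. By the linear Schauder theory for the conic Laplacian in these spaces \cite{GP,JMR}, this operator is Fredholm of index $0$ and formally self-adjoint, so it suffices to show its kernel is trivial. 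Suppose $\psi\in C^{2,\al}_{\be(t_0)}$ satisfies $-\Delta_{\dr_{t_0}}\psi=t_0\psi$. From \eqref{eq:path} one has $Ric(\dr_{t_0})\ge t_0\dr_{t_0}+(1-t_0)b_0\al_0$ away from $D$, and the Bochner--Kodaira identity for $\psi$ may be integrated by parts against $\dr_{t_0}$ by approximating $\dr_{t_0}$ with the smooth metrics $\tilde{\dr}_\de$ of Theorem \ref{thm-shen} and letting $\de\to 0$, giving
$$\int_M|\bar{\partial}\nabla^{1,0}\psi|^2\,\dr_{t_0}^n=\int_M|\Delta_{\dr_{t_0}}\psi|^2\,\dr_{t_0}^n-\int_M Ric(\dr_{t_0})(\nabla^{1,0}\psi,\nabla^{0,1}\psi)\,\dr_{t_0}^n.$$
Inserting the Ricci lower bound and the identities $\int_M|\nabla^{1,0}\psi|^2\,\dr_{t_0}^n=t_0\int_M|\psi|^2\,\dr_{t_0}^n$ and $\int_M|\Delta_{\dr_{t_0}}\psi|^2\,\dr_{t_0}^n=t_0^2\int_M|\psi|^2\,\dr_{t_0}^n$ (both consequences of $-\Delta_{\dr_{t_0}}\psi=t_0\psi$), one gets $\int_M|\bar{\partial}\nabla^{1,0}\psi|^2\,\dr_{t_0}^n+(1-t_0)b_0\int_M\al_0(\nabla^{1,0}\psi,\nabla^{0,1}\psi)\,\dr_{t_0}^n\le 0$; since $\al_0>0$ and $t_0<1$, both terms vanish, so $\nabla\psi\equiv 0$, hence $\psi$ is constant, hence $\psi\equiv 0$ because $t_0>0$. (For $t_0=0$ the kernel is the constants and the same computation on their $L^2$-orthogonal complement shows the restricted operator is an isomorphism.) Thus $D_u\Phi|_{(0,t_0)}$ is invertible, and the implicit function theorem produces, for every $t$ in a neighborhood of $t_0$ in $[0,1]$, a solution $u_t$ of the displayed \MA\ equation and hence a solution $\varphi_t$ of \eqref{eq:ma-path}; therefore a neighborhood of $t_0$ lies in $I$, and $I$ is open.

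The invertibility of the linearization is clean, and the strict positivity of the twisting form $b_0\al_0$ is precisely what removes any kernel, so no restriction on holomorphic vector fields enters here. The main obstacle is instead the bookkeeping of function spaces as $t$ varies: the cone angles $2\pi\be_r(t)$ change with $t$, so the spaces $C^{2,\al}_{\be(t)}$ and the reference metrics $\hat{\dr}_t$ must be organized into a continuous family with Schauder constants uniform for $t$ near $t_0$, and — unlike the one-divisor case — the linear theory must be available near the multiple intersections $D_{r_1}\cap\cdots\cap D_{r_k}$, not only along the smooth locus of $D$. This is exactly the analytic framework developed for conical \ka--Einstein metrics along simple normal crossings in \cite{GP} (and used in \cite{Sh}), to which I would appeal; with it in place, the implicit function theorem step above goes through and the lemma follows.
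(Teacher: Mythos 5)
Your overall scheme (implicit function theorem at $t_0$, linearization $\Delta_{\dr_{t_0}}+t_0$, kernel killed by a Bochner argument exploiting the strict Ricci bound coming from the twist $(1-t_0)b_0\al_0$) is in spirit the same as the paper's, and your kernel computation is essentially the one the paper performs. The genuine gap is at the step you treat as a citation: you assert that $\Delta_{\dr_{t_0}}+t_0:C^{2,\al}_{\be(t_0)}\to C^{0,\al}_{\be(t_0)}$ is Fredholm of index zero, with Schauder constants organized uniformly as the cone angles $\be_r(t)=1-(1-t)b_r$ vary, appealing to \cite{GP,JMR}. But the paper's proof is built precisely around the observation that this linear theory is not available for simple normal crossing divisors: \cite{JMR} and \cite{Do} treat a single smooth divisor, while \cite{GP} obtains existence by regularizing the cone singularities and proving global a priori estimates, and does not establish the mapping properties (weighted Schauder estimates, Fredholmness, self-adjoint index theory) of the conic Laplacian near the intersections $D_{r_1}\cap\cdots\cap D_{r_k}$. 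You flag this difficulty yourself in your closing paragraph, but then dispose of it by appealing to exactly the reference that does not contain it; so the crux of the lemma is assumed rather than proved.

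The paper circumvents the missing linear theory as follows: it approximates $\varphi_{t_0}$ by solutions $\varphi_{t_0,\de}$ of the regularized equations \eqref{eq:ma-path-app}, whose metrics $\dr_{t_0,\de}$ are smooth, satisfy $Ric(\dr_{t_0,\de})\geq t_0\dr_{t_0,\de}$ by Theorem \ref{thm-shen}, and carry $C^{2,\al}$ bounds uniform in $\de$ by the conic estimates of \cite{Ti17,Sh1} (which depend only on the local model geometry, not on linear theory); it then proves the uniform invertibility estimate \eqref{eq:holder} for the smooth linearizations $\mathcal{L}_{t_0,\de}=\Delta_{t_0,\de}+t_0$ by a compactness/contradiction argument, whose limiting step is an integration by parts over $M_\e$ (the complement of an $\e$-tube around $D$) where your Bochner identity appears, with the boundary terms controlled by the $C^{2,\al}$ regularity of the limit function ($\overline{\nabla}\nabla\nabla\psi=O(\e^{\al-1})$ against $|\partial M_\e|=O(\e)$); finally the estimate passes to $\mathcal{L}_{t_0}$ and the inverse function theorem applies. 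If you wish to keep your route you must actually supply the weighted Schauder/Fredholm theory near the crossings with constants uniform in the angles, which is exactly the work the paper's argument is designed to avoid. A second, smaller point: justifying your Bochner integration by parts ``by approximating $\dr_{t_0}$ with the smooth metrics of Theorem \ref{thm-shen}'' does not work as stated, since $\psi$ is a fixed eigenfunction of the conic metric and changing the metric does not by itself control the contributions across $D$; the cutoff argument on $M_\e$ with the $\e^{\al-1}$ derivative bound is what is actually needed, and it is what the paper uses.
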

\begin{proof}
As the linear theory of conic metrics \cite{Do,JMR} is not so clear in case of simple normal crossing divisors, we may need to some more work to go through this difficulty. In fact we need to show that if
\eqref{eq:ma-path} is solvable at $t=t_0\in[0,1)$ then \eqref{eq:ma-path} is solvable for a small neighborhood around $t_0.$ The main observation is that the Ricci curvature of $\dr_{t_0}$ is strictly greater that $t_0.$
In case that $\dr_t$ is a smooth continuity path that is enough to show the openness \cite{Ti90,Ti97,Ti00}.
In our settings similar to \cite{Do} we need to show a H\"older estimate for the linearization $\mathcal{L}_{t_0}=\Delta_{t_0}+t_0$ of \eqref{eq:ma-path} at $t=t_0,$ where $\Delta_{t_0}$ denotes the Laplacian of $\dr_{t_0}.$ The idea here is to apply the approximation theorem \ref{thm-shen} and establish a uniform H\"older estimate for this family of linearizations.

By the proof of Theorem \ref{thm-shen} in \cite{Sh} (also see \cite{Ti15}), given a solution $\varphi_{t_0}$
to \eqref{eq:ma-path} at $t=t_0,$ for any $\de>0,$ we can solve a family of approximated complex \MA\ equations:
\begin{equation}\label{eq:ma-path-app}
(\dr+\ddb\varphi_{t_0,\de})^n=\displaystyle\frac{e^{f_1+(1-t)f_2-t_0\varphi_{t_0,\de}}\dr^n}
{\prod_r(||S_r||_r^2+\de)^{b_r(1-t)}},
\end{equation}
where $\varphi_{t_0,\de}$ satisfies that
$$\displaystyle\int_M\frac{e^{f_1+(1-t)f_2-t_0\varphi_{t_0,\de}}\dr^n}{\prod_r(||S_r||_r^2+\de)^{b_r(1-t)}}=1.$$
Then $\dr_{t,\de}$ converges to $\dr_t$ in the $C^{\infty}(M\setminus D)$ and Gromov-Hausdorff topology. Moreover, by the H\"older estimate of the complex \MA\ equations with conic singularities in \cite{Ti17,Sh1}
it follows that $||\varphi_{t_0}||_{C^{2,\al}(M)}\leq c_\al$ for some $\al\in(0,1)$ with respect to $\dr_{t_0}.$ Check the details of the proof we can see that those estimates only depend on the local geometry of the model conic metric without anything concerning the linear theory thus by the Gromov-Hausdorff convergence of $\dr_{t_0,\de}$ we can show that $||\varphi_{t_0,\de}||_{C^{2,\al}(M)}\leq c'_\al$ for some $\al\in(0,1)$ with respect to $\dr_{t_0,\de}.$

Note that the linearization of \eqref{eq:ma-path-app} is $\mathcal{L}_{t_0,\de}:=\Delta_{t_0,\de}+t_0$ where $\Delta_{t_0,\de}$ denotes the Laplacian of the approximated metric $\dr_{t_0,\de}=\dr+\ddb\varphi_{t_0,\de}.$
It follows from Theorem \ref{thm-shen} that $Ric(\dr_{t_0,\de})\geq t_0\dr_{t_0,\de}.$ Thus it follows from Bochner's formula that the first eigenvalue $\lambda_1(-\Delta_{t_0,\de})>t_0$ and then $\mathcal{L}_{t_0,\de}: C^{\al}(M)\to C^{2,\al}(M)$ is invertible for some $\al$ less than the H\"older exponent in the $C^{2,\al}$-estimate of $\varphi_{t_0}$ above. We want to claim that for this $\al$ and $\de_0\in(0,1)$ there exists a uniform constant $C_\al>0$ such that for any $\psi\in C^{2,\al}(M)$ which satisfies that $\int_M\psi\dr_{t_0}^n=0$ and $\de\in(0,\de_0)$ it follows that
\begin{equation}\label{eq:holder}
||\psi||_{C^{2,\al}(M)}\leq C_{\al}||\mathcal{L}_{t_0,\de}\psi||_{C^{\al}(M)}
\end{equation}
where the norm is with respect to $\dr_{t_0\de}.$ If this claim is false, we can always find a sequence $\de_k\to 0$ such that there exist a sequence of functions $\psi_k$ satisfying that
\begin{itemize}
\item $\int_M\psi_k\dr_{t_0}^n=0;$
\item $||\psi_k||_{C^{2,\al}(M)}=1$ with respect to $\dr_{t_0,\de_k};$
\item $||\mathcal{L}_{t_0,\de_k}\psi_k||_{C^{\al}(M)}\leq 1/k$ with respect to $\dr_{t_0,\de_k}.$
\end{itemize}
As $||\psi_k||_{C^{2,\al}(M)}=1$ with respect to $\dr_{t_0,\de_k},$ combined with the fact that $\dr_{t_0,\de_k}$ converges to $\dr_{t_0}$ in the Gromov-Hausdorff topology with uniform H\"older norm, we can show that for any smaller $\al'\in(0,\al),$ possibly by passing to a subsequence, $\psi_k$ converges to $\psi$ in the norm of $C^{2,\al'}(M)$ such that $||\psi||_{C^{2,\al}(M)}=1$ with respect to $\dr_{t_0}.$ Moreover it follows from this convergence and $||\mathcal{L}_{t_0,\de_k}\psi||_{C^{\al}(M)}\leq 1/k$ that
$||\mathcal{L}_{t_0}\psi||_{C^{\al}(M)}=0$ thus $(\Delta_{t_0}+t_0)\psi=0.$

Now set $M_\e$ as the subset in $M$ outside a tubular neighborhood of $D=\sum_r D_r$ with scale $\e,$ ignoring $t_0$ in the Laplacian and covariant derivatives below, by standard computations we have
\begin{align*}
0=&-\int_{M_\e}(\Delta+t_0)\psi\cdot\Delta\psi+\int_{\partial M_\e}(\Delta+t_0)\psi*\nabla\psi\\=&\int_{M_\e}\nabla((\Delta+t_0)\psi)\cdot\overline{\nabla}\psi\\
=&\int_{M_\e}(\overline{\nabla}\nabla\nabla\psi\cdot\overline{\nabla}\psi
-Ric(\nabla\psi,\overline{\nabla}\psi)+t_0|\nabla\psi|^2)\\
=&\int_{\partial M_\e}\nabla\nabla\psi*\overline{\nabla}\psi-\int_{M_\e}|\nabla\nabla\psi|^2
-\int_{M_\e}(Ric(\nabla\psi,\overline{\nabla}\psi)-t_0|\nabla\psi|^2).
\end{align*}
With respect to $\dr_{t_0},$ since $\ddb\psi$ is $C^{\al},$ along $M_\e$ the third order derivative $\overline{\nabla}\nabla\nabla\psi$ has the order at most $\e^{\al-1},$ so does $\nabla\nabla\psi.$
On the other hand, the measure of $\partial M_\e$ has the order of $\e$ and $\nabla\psi$ is bounded, thus
the first integration over $\partial M_\e$ on the RHS tends to 0 as $\e$ tends to 0. As $Ric(\dr_{t_0})$
is strictly greater than $t_0,$ it follows that $\psi$ must be 0, which contradicts with $||\psi||_{C^{2,\al}(M)}=1.$ Thus the claim \eqref{eq:holder} follows.

By this claim, as $\dr_{t_0,\de}$ converges to $\dr_{t_0}$ in the Gromov-Hausdorff topology with uniform
$C^{2,\al}$ norm, \eqref{eq:holder} is also true for $\mathcal{L}_{t_0}=\Delta_{t_0}+t_0.$ Thus the openness of $I$ follows from the inverse function theorem and this lemma is true.
\end{proof}
\begin{remark}
In fact, if there is no $\al_0$ in \eqref{eq:path1}, we could show the openness result in case that there is no holomorphic vector fields tangential to divisors, which generalizes Donaldson's openness theorem \cite{Do} to the case of simple normal crossing divisors without use of linear theory.
\end{remark}

It remains to show the closeness of the solvable set $I.$ Suppose we have a sequence $t_i\in I$ such that $t_i\to T$ but $t\notin I,$ then by the argument in \cite{JMR,Ti15}, $||\varphi_{t_i}||_{C^0}$ must diverge to $\infty.$ By Corollary \ref{cor-reductivity} and geometrical invariant theory there exists a $C^*$-subgroup
$G_0\subset G$ that degenerates $(M,D)$ to $(M_\infty,D_\infty).$ As $||\varphi_{t_i}||_{C^0}$ diverges to $\infty,$ by \cite{Ti97} the central fiber $(M_\infty,D_\infty)$ of this degeneration is not biholomorphic to
$(M,D).$ We will derive a contradiction with $K$-stability.

Now let us recall the $K$-stability defined by Tian in \cite{Ti97,Ti15}. First recall that
on the Fano manifold $(M,\dr)$ where $\dr\in c_1(M)$ is a smooth \ka\ metric. For any holomorphic
vector field $X$ on $M,$ the Futaki invariant is defined by
\begin{equation}\label{eq:futaki}
f_{M}(X):=-n\int_M\theta_X(Ric(\dr)-\dr)\wedge\dr^{n-1},
\end{equation}
where $i_X\dr=\sqrt{-1}\dbar\theta_X$ and this is a holomorphic invariant by Futaki \cite{Fu}.
Moreover, in \cite{DT} the Futaki invariant was extended to normal Fano varieties as following: Assume
$M\mapsto\CP^N$ through a basis of $H^0(M,K_M^{-l})$ for sufficiently large $l.$ Then for any
algebraic subgroup $G_0=\{\sigma(t)\}_{t\in\C^*}$ of $G=SL(N+1,\C)$ there is a unique limiting cycle
$M_0=\lim\limits_{t\to 0}\sigma(t)(M)\subset\CP^N.$ Let $X$ be the holomorphic vector field whose real part
generates the action by $\sigma(e^{-s}).$ If $M_0$ is normal then \eqref{eq:futaki} could be
generalized to $M_0$ by \cite{DT}. Then we can define the $K$-stability as following:
\begin{definition}\label{def-K}
We say that $M$ is $K$-stable with respect to $K_M^{-l}$ if $Re(f_{M_0}(X))\geq 0$ for any
$C^*$ subgroup $G_0\subset SL(N+1,\C)$ with a normal $M_0,$ and the equality holds if and only if
$M_0$ is biholomorphic to $M.$ We way that $M$ is $K$-stable if it is $K$-stable for all
sufficiently large $l.$
\end{definition}

We also need the Mabuchi energy and the twisted Mabuchi energy in the proof. Given a Fano manifold
$(M,\dr)$ as before, suppose $\{\varphi_s\}$ is a path in $Psh(M,\dr)$ connecting 0 and $\varphi,$ then the Mabuchi energy can be defined as following:
\begin{equation}\label{eq:mabuchi}
M_{\dr}(\varphi):=-n\int_0^1\int_M\dot{\varphi}_s(Ric(\dr_{\varphi_s})-\dr_{\varphi_s})
\wedge\dr_{\varphi_s}^{n-1}ds,
\end{equation}
where $\dr_{\varphi_s}:=\dr+\ddb\varphi_s,$ and moreover this Mabuchi energy is independent of
the choices of paths. Given the $C^*$-action $\{\sigma_s:=\sigma(e^{-s})\}$ above and
suppose $$\frac{1}{l}\sigma_s^*\dr_{FS}=\dr+\ddb\psi_{s},$$ it follows from \cite{DT} that
\begin{equation}\label{eq:mabuchi-dev}
\lim\limits_{s\to+\infty}\frac{d}{ds}M_\dr(\psi_s)=Re(f_{M_0}(X)),
\end{equation}
where $X$ is the generator of $\{\sigma_s\}.$

Similarly we could define the twisted Mabuchi energy as \cite{LS,Sh}:
\begin{align}\label{eq:twisted mabuchi}
M_{\dr,t}(\varphi):=&-n\int_0^1\int_M\dot{\varphi}_s(Ric(\dr_{\varphi_s})-
t\dr_{\varphi_s}-(1-t)(2\pi\sum_r b_r[D_r]+b_0\al_0))
\wedge\dr_{\varphi_s}^{n-1}ds\nonumber\\
=&\int_M\log\frac{\dr_\varphi^n}{\dr^n}\dr_\varphi^n-\int_M h(\dr_\varphi^n-\dr^n)
-\sum_{j=1}^n\int_M\varphi\dr^j\wedge\dr_\varphi^{n-j}\nonumber\\
&+\frac{nt}{n+1}\sum_{j=0}^n\int_M\varphi\dr^j\wedge\dr_\varphi^{n-j}\nonumber\\
&+(1-t)\sum_{j=0}^{n-1}\left(\sum_r b_r\int_{D_r}\varphi\dr^j\wedge\dr_\varphi^{n-1-j}+
b_0\int_M\varphi\al_0\wedge\dr^j\wedge\dr_\varphi^{n-1-j}\right),
\end{align}
which is also independent of paths connecting 0 and $\varphi.$ Similar to the argument in \cite{DT} we can also define the corresponding generalized Futaki invariant on the limit $(M_\infty,D_\infty):$
$$f_{M_\infty,D_\infty,t}(X)=-n\int_{M_\infty}\theta_X(Ric(\dr)-t\dr-(1-t)(2\pi\sum_r b_r[D_r]+b_0\al_0)))\wedge\dr^{n-1},$$
and moreover similar to \eqref{eq:mabuchi-dev}, if the $C^*$-action $\sigma_s$ preserves $(M_\infty,D_\infty),$
it follows that
\begin{equation}\label{eq:general-mabuchi-dev}
\lim\limits_{s\to+\infty}\frac{d}{ds}M_{\dr,t}(\psi_s)=Re(f_{M_\infty,D_\infty,t}(X)),
\end{equation}
To complete the proof, we need the following lemma which is modified from \cite{DT}:
\begin{lemma}\label{lem-DT}
If \eqref{eq:path1} or \eqref{eq:ma-path} is solvable at $t,$ then $Re(f_{M_\infty,D_\infty,t}(X))\geq 0.$ Futhermore, if $\sigma_s$ preserve $(M,D),$ then $Re(f_{M_\infty,D_\infty,t}(X))=0.$
\end{lemma}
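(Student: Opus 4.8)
The proof follows Ding--Tian \cite{DT}, adapted to the twisted continuity path. The two ingredients are: (i) the identity \eqref{eq:general-mabuchi-dev}, which along the ray $\psi_s$ defined by $\frac{1}{l}\sigma_s^*\dr_{FS}=\dr+\ddb\psi_s$ gives $\lim_{s\to+\infty}\frac{d}{ds}M_{\dr,t}(\psi_s)=Re(f_{M_\infty,D_\infty,t}(X))$; and (ii) the fact that, once \eqref{eq:path1} (equivalently \eqref{eq:ma-path}) is solvable at $t>0$, the twisted Mabuchi energy $M_{\dr,t}$ is bounded from below on the space of \ka\ potentials. Granting these, the first assertion is immediate: the $\psi_s$ are smooth \ka\ potentials on $M$ (since $\dr_{FS}$ is a fixed smooth metric on $\CP^N$ and each $\sigma_s\in SL(N+1,\C)$ acts by an automorphism), so $M_{\dr,t}(\psi_s)\geq\inf M_{\dr,t}>-\infty$; if the limit $L=Re(f_{M_\infty,D_\infty,t}(X))$ from \eqref{eq:general-mabuchi-dev} were negative, then $M_{\dr,t}(\psi_s)\leq M_{\dr,t}(\psi_{s_0})+\frac{L}{2}(s-s_0)\to-\infty$ for $s$ large, a contradiction; hence $Re(f_{M_\infty,D_\infty,t}(X))\geq 0$.

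The point requiring work is (ii). Let $\dr_t=\dr+\ddb\varphi_t$ solve \eqref{eq:path1} at $t$; then $\varphi_t$ is a critical point of $M_{\dr,t}$, and since the twisting term $(1-t)\big(2\pi\sum_r b_r[D_r]+b_0\al_0\big)$ is a positive closed current, $M_{\dr,t}$ is convex along (weak) geodesics in the space of \ka\ potentials --- a Berndtsson-type convexity adapted to the conic behaviour along $D=\sum_r D_r$, as in \cite{LS,Sh}. Hence $\varphi_t$ is a global minimizer and $\inf M_{\dr,t}=M_{\dr,t}(\varphi_t)>-\infty$. Alternatively, when $[0,t]$ lies in the solvable set, one may bound $M_{\dr,t}$ from below by integrating $\frac{d}{d\tau}M_{\dr,\tau}(\varphi_\tau)$ from $\tau=0$, where a solution exists by the choice of path, to $\tau=t$, using the a priori estimates and the approximation Theorem \ref{thm-shen}.

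For the equality statement, assume $\sigma_s$ preserves $(M,D)$. Then $\sigma(e^{-s})|_M\in\mathrm{Aut}(M,D)$ for every $s$, so $\sigma(e^{-s})(M)=M$ for all $s$; consequently the central fibre of the degeneration is $(M_\infty,D_\infty)=(M,D)$ and $f_{M_\infty,D_\infty,t}=f_{M,D,t}$ is the twisted Futaki invariant. Since the cohomology class $[Ric(\dr)-t\dr-(1-t)(2\pi\sum_r b_r[D_r]+b_0\al_0)]$ vanishes in $H^2(M,\R)$ by the cohomological hypotheses in \eqref{eq:path}, we have $\int_M(Ric(\dr)-t\dr-(1-t)(2\pi\sum_r b_r[D_r]+b_0\al_0))\wedge\dr^{n-1}=0$, whence $f_{M,D,t}$ is linear in $X$ and $f_{M,D,t}(-X)=-f_{M,D,t}(X)$. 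The field $-X$ generates the reversed one-parameter subgroup, which still lies in $\mathrm{Aut}(M,D)$ and has central fibre $(M,D)$; applying the first assertion (with the same lower bound on $M_{\dr,t}$) to $-X$ gives $Re(f_{M,D,t}(-X))\geq 0$, i.e.\ $Re(f_{M,D,t}(X))\leq 0$. Combined with $Re(f_{M,D,t}(X))\geq 0$ this yields $Re(f_{M_\infty,D_\infty,t}(X))=0$.

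The main obstacle is precisely step (ii): proving that solvability of \eqref{eq:path1} at $t$ forces $M_{\dr,t}$ to be bounded below in this conic-plus-twisted setting, which requires either extending geodesic convexity of the twisted Mabuchi energy through the singularities along $D$ and the current term $2\pi\sum_r b_r[D_r]$, or else controlling $\frac{d}{d\tau}M_{\dr,\tau}(\varphi_\tau)$ uniformly along the path via the approximating metrics of Theorem \ref{thm-shen}. Everything else is a routine transcription of \cite{DT}.
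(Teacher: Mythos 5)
Your overall skeleton coincides with the paper's: both proofs run the derivative formula \eqref{eq:general-mabuchi-dev} against a lower bound for the twisted Mabuchi energy $M_{\dr,t}$ to get $Re(f_{M_\infty,D_\infty,t}(X))\geq 0$, and both obtain the equality case by replacing $X$ with $-X$ when $\sigma_s$ preserves $(M,D)$ (your observation that the central fibre is then $(M,D)$ itself and that the invariant changes sign is exactly what the paper's terse ``choose $-X$'' means). The genuine divergence is in how the lower bound of $M_{\dr,t}$ is obtained. The paper does not use convexity at all: following \cite{Ding,DT,LS,Sh}, it passes to the twisted Ding energy, shows it has the same boundedness behaviour as $M_{\dr,t}$, and then uses a log $\al$-invariant argument together with solvability of \eqref{eq:ma-path} at $t$ to bound the Ding energy from below. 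Your route instead asserts that the solution $\varphi_t$ is a critical point of $M_{\dr,t}$, that $M_{\dr,t}$ is convex along weak geodesics because the twisting term is a positive closed current, and hence that $\varphi_t$ is a global minimizer.

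Two caveats on your route. First, ``critical point $+$ convexity along weak geodesics $\Rightarrow$ global minimizer'' is not formal: the functional need not be differentiable along $C^{1,1}$ geodesics, and making this argument rigorous is precisely the content of the Berman--Berndtsson theorem; in the present setting, with conic singularities along a simple normal crossing divisor $D$ \emph{and} the current term $2\pi\sum_r b_r[D_r]$ plus the smooth twist $b_0\al_0$, this is not an off-the-shelf statement and would need to be established (you flag this yourself, but it is the whole content of step (ii), which is exactly what the paper's Ding-energy/$\al$-invariant argument is designed to avoid). Second, your ``alternative'' of integrating $\frac{d}{d\tau}M_{\dr,\tau}(\varphi_\tau)$ from $0$ to $t$ only controls the value of $M_{\dr,\tau}$ \emph{at the solutions} $\varphi_\tau$, not $\inf M_{\dr,t}$ over all potentials, so by itself it does not yield the lower bound needed to run the argument along the destabilizing ray $\psi_s$; it would have to be supplemented by a Bando--Mabuchi/Ding--Tian type monotonicity or by the Ding-functional comparison the paper uses. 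With either of these repairs your proof closes; as written, the lower-boundedness step is the gap.
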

\begin{proof}
Let us briefly describe the main ingredients in the proof and for more details, see \cite{DT,LS}.
To show the first conclusion, we first need to show that the corresponding twisted Mabuchi energy
$M_{\dr,t}$ is bounded from below. For this target, we could argue as \cite{LS,Sh} that the twisted Ding energy modified from \cite{Ding} has the same behavior with the twisted Mabuchi energy. Then as \eqref{eq:ma-path} is solvable at $t,$ we could use the log $\al$-invariant argument to show that the twisted Ding energy is bounded from below which implies that the twisted Mabuchi energy is also bounded from below. Then it follows from \eqref{eq:general-mabuchi-dev} that $Re(f_{M_\infty,D_\infty,t}(X))\geq 0$ otherwise $M_{\dr,t}(\psi_s)$ may decrease to $-\infty$ which is impossible. For the second conclusion as $\sigma_s$ preserve $(M,D),$ we can choose $-X$ instead of $X$ and then $Re(f_{M_\infty,D_\infty,t}(X))$ must be 0.
\end{proof}
As \cite{Ti15}, by \eqref{eq:mabuchi} and \eqref{eq:twisted mabuchi}, we have the following relation:
$$TM_{\dr}(\psi_s)=M_{\dr,T}(\psi_s)-(1-T)M_{\dr,0}(\psi_s).$$
Take the derivative with respect to $s$ and let $s\to+\infty,$ it follows from \eqref{eq:mabuchi-dev} and \eqref{eq:general-mabuchi-dev} that
$$TRe(f_{M_\infty}(X))=Re(f_{M_\infty,D_\infty,T}(X))-(1-T)Re(f_{M_\infty,D_\infty,0}(X)).$$
By Lemma \ref{lem-DT}, as the corresponding action $\sigma_s$ is generated from the limit
of $(M,D,\dr_{t_i})$ with $t_i\to T$ it follows that $Re(f_{M_\infty,D_\infty,T}(X))=0.$ On the other hand we have known that in our case \eqref{eq:ma-path} is always solvable at $t=0$ thus
$Re(f_{M_\infty,D_\infty,0}(X))\leq 0.$ Thus it follows that $Re(f_{M_\infty}(X))\leq 0$ as $T>0$
and recall that $M_\infty$ is not biholomorphic to $M$ as $\varphi_{t_i}||_{C^0}$ diverges to infinity, which contradicts with the $K$-stability that $Re(f_{M_\infty,D_\infty,0}(X))\geq 0$
and the equality holds only if $M_\infty$ is biholomorphic to $M.$ Thus we complete the proof of
Corollary \ref{cor-YTD}.
\begin{remark}
If we replace the $K$-stability by the $CM$-stability in the Corollary \ref{cor-YTD} as \cite{Ti15} we can also establish the existence of \ka-Einstein metrics by quite similar arguments in \cite{Ti15}.
\end{remark}

\end{document}